\documentclass[11pt]{article}

\usepackage[margin=1.08in]{geometry}
\usepackage{amsmath,amssymb,amsthm}
\usepackage{mathtools}
\usepackage{booktabs}
\usepackage{array}
\usepackage{enumitem}
\usepackage{xcolor}
\usepackage[colorlinks=true,linkcolor=blue!50!black,
  citecolor=green!35!black,urlcolor=blue!55!black]{hyperref}
\usepackage{microtype}
\usepackage{listings}

\allowdisplaybreaks
\setlist{itemsep=2pt,topsep=4pt}
\newtheorem{theorem}{Theorem}[section]
\newtheorem{lemma}[theorem]{Lemma}
\newtheorem{proposition}[theorem]{Proposition}
\newtheorem{corollary}[theorem]{Corollary}

\theoremstyle{definition}

\theoremstyle{remark}
\newtheorem{remark}[theorem]{Remark}

\newcommand{\F}{\mathbb{F}}
\newcommand{\R}{\mathbb{R}}
\newcommand{\E}{\mathbb{E}}
\newcommand{\body}{\mathcal{B}}
\newcommand{\one}{\mathbf{1}}

\newcommand{\down}{\downarrow}

\title{Value distributions for read-once polynomials on finite fields}
\author{A.~Yashunsky and
D.~Tabalin\textsuperscript{\hyperlink{author-contributions}{1}}}
\date{July 2026}
\hypersetup{
  pdftitle={Value distributions for read-once polynomials on finite fields},
  pdfauthor={A. Yashunsky and D. Tabalin},
  pdfsubject={A body of probability distributions stable under additive and
multiplicative convolution over finite fields}
}

\begin{document}
\maketitle
\footnotetext[1]{\hypertarget{author-contributions}{}Author contributions
and AI disclosure: A.~Yashunsky pointed out the problem to D.~Tabalin and
provided expert review; D.~Tabalin was the model whisperer. The authors made substantial use of generative AI
tools: GPT 5.6 Sol found the proof and produced the Lean formalization and
the initial exposition. Claude 5 Fable reviewed the results and improved the exposition.
The human authors take full responsibility for the statements,
proofs, and presentation.}
\setcounter{footnote}{1}

\begin{abstract}
Consider read-once polynomial functions over a finite field of
order~$k$, i.e., functions defined by expressions built from field
addition, multiplication, and constants, in which every variable
occurs at most once. Let $p=(p_1,\dots,p_k)$ be the distribution of
the values of a read-once function on independent uniform inputs.
We prove that for every $k\ge4$ all such distributions belong to a
body $\body_k$, defined by the following relation on the sorted
atoms $p_1^\down\ge\cdots\ge p_k^\down$ of the distribution:
\[
  \body_k=\left\{p:p_k^\down\ge
  \frac{1-p_2^\down-(1-p_2^\down)^k}{k-1}\right\},
\]
or equivalently,
$\body_k = \{ p \colon 1-p_2^\down-(k-1)p_k^\down\le(1-p_2^\down)^k\}$.

Our main theorem is that this body is stable under the convolutions
corresponding to both field operations.  More generally, convolution
for any quasigroup operation on $k$ points preserves $\body_k$; the
multiplicative conclusion needs only an absorbing zero and a
quasigroup operation on the nonzero elements.

The body is full-dimensional and contains the uniform law and every
point mass; its normalized volume is given by an exact one-dimensional
integral, and the $k$th root of that volume tends to
$0.2183305369\ldots$.  The complete development, including
the two stability theorems, the exact volume formula, and its sharp exponential rate,
has been formalized in Lean 4.
\end{abstract}

\section{Introduction}\label{sec:introduction}

The initial motivation for this work was a simple question: how do
the laws of independent random elements of a finite field transform
under the field operations?  If $X\sim\mu$ and $Y\sim\nu$ are
independent, the laws of $X+Y$ and $XY$ are the additive and
multiplicative convolutions $\mu\oplus\nu$ and $\mu\otimes\nu$, and a
basic program, going back to work of Yashunsky on operation-stable
polyhedra of distributions~\cite{yashunsky2015, yashunskii2017}, asks for regions of
the probability simplex that both convolutions preserve.  Such stable
regions are the natural invariants of arithmetic computation with
independent inputs: whatever a computation built from $+$ and $\times$
does, its output law cannot leave the region.

The independence required at every operation arises naturally in a
\emph{read-once} arithmetic formula: a binary $\{+,\times\}$ tree, with
variables and constants at the leaves, in which no input variable occurs
twice.  Distinct subtrees then involve
disjoint variables, hence are independent when the inputs are sampled
independently, and the law at every gate is the additive or
multiplicative convolution of the laws below it.  Any
convolution-stable region containing the laws of the inputs therefore
absorbs the output law of every read-once formula.  Statements about
stable bodies thus become statistical statements about read-once
computations, provided that the uniform distribution --- the law of a
single variable --- belongs to the body. Traditionally, the central
question on read-once functions is whether a specific function, either Boolean
or arithmetic, has a read-once representation (see \cite{golumbic2011,volkovich2016},
for instance). Yet we believe our approach is of value as well,
contributing to a general understanding of how read-once functions work.

This paper constructs such a body for every order $k\ge4$.  It contains
all point masses and the uniform law (Proposition~\ref{prop:B-base-laws}),
so its stability bounds the value distributions of read-once polynomials
on independent uniform inputs, constants allowed
(Corollary~\ref{cor:readonce}).  For $k\ge2$ let
\begin{equation}\label{eq:psi-def}
  \psi_k(t):=\frac{1-t-(1-t)^k}{k-1},
\end{equation}
sort the atoms of a law $p$ on $k$ points as
$p_1^\down\ge\cdots\ge p_k^\down$, and define
\begin{equation}\label{eq:Bq-def}
  p\in\body_k\quad\Longleftrightarrow\quad
  p_k^\down\ge\psi_k(p_2^\down).
\end{equation}
Our main result is the following stability theorem.

\begin{theorem}[stability of the body]\label{thm:body}
Let $k\ge4$.  Additive convolution preserves $\body_k$ on every finite
abelian group of order $k$.  Multiplicative convolution preserves
$\body_k$ on every finite field of order $k$.
\end{theorem}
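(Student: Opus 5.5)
The plan is to prove one abstract statement and obtain both halves of Theorem~\ref{thm:body} from it. The statement: for any quasigroup (Latin square) operation $\ast$ on a $k$-point set, the convolution $r=\mu\ast\nu$, given by $r_z=\sum_{x}\mu_x\,\nu_{\sigma_x(z)}$ with each $\sigma_x$ a bijection, maps $\body_k\times\body_k$ into $\body_k$. A finite abelian group of order $k$ is a quasigroup, so the additive half follows at once.

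For the multiplicative half I would handle the zero separately, since $0$ is absorbing:
\[
r_0=\mu_0+\nu_0-\mu_0\nu_0 .
\]
On $\F^\times$ the law $r$ restricted to nonzero elements is $(1-\mu_0)(1-\nu_0)$ times a quasigroup convolution on $k-1$ points. This requires a second, more delicate case analysis on where the atom at $0$ sits in the sorted order. The role of $k\ge4$ should become visible there: $(1-t)^k$ must dominate the extra mass created at $0$.

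For the quasigroup case I would work with the ``deficit'' $D(p):=1-p_2^\down-(k-1)p_k^\down$. In this notation $p\in\body_k$ means exactly $D(p)\le(1-p_2^\down)^k$. Write $a=1-\mu_2^\down$ and $b=1-\nu_2^\down$, and let $x_1,y_1$ be the positions of the largest atoms of $\mu$ and $\nu$.

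The Latin-square property gives a combinatorial handle. The pair $(x_1,y_1)$ contributes its product of maxima to a single output $z_0=x_1\ast y_1$. Every other output $z$ receives, in row $x_1$, at most $\mu_1^\down\nu_2^\down$. It also receives, in column $y_1$, at most $\mu_2^\down\nu_1^\down$, and the remaining terms are bounded by the leftover mass. From this I aim to prove the intermediate bound
\[
1-r_2^\down\ \ge\ ab .
\]
It may only hold up to a correction term, and I would check the extremal case where $\mu$ and $\nu$ are two-point laws. In parallel, $r_z\ge\sum_x\mu_x\nu^\down_k$ and symmetric averaging give lower bounds on $r_k^\down$, which I would use to show
\[
D(r)\le D(\mu)\,D(\nu)+(\text{terms controlled by }a,b).
\]

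Combining the two bounds with the hypotheses $D(\mu)\le a^k$ and $D(\nu)\le b^k$ should yield
\[
D(r)\le a^kb^k\le(ab)^k\le(1-r_2^\down)^k .
\]

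The main obstacle is exactly this pair of inequalities, above all the bound on $D(r)$. The sorted statistics $p_2^\down$ and $p_k^\down$ do not interact linearly with convolution, so I expect a case split. One split is on whether $z_0$ remains the top atom of $r$. Another is on whether the mass outside the top two atoms of $\mu,\nu$ is small, i.e.\ near point masses, where $\psi_k$ is nearly linear. A third is whether it is spread out, where the uniform-like regime makes $D(r)\le 0$ easy.

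If the clean product bound fails, the first fallback is to reduce to extreme points. The condition $D(r)\le(1-r_2^\down)^k$ is preserved under mixing each factor with point masses, which are the permutation images. So it suffices to check $\mu,\nu$ on the boundary $p_k^\down=\psi_k(p_2^\down)$ with few distinct atom values. That turns the problem into a one- or two-parameter inequality in $t$ to be verified by calculus for $k\ge4$.
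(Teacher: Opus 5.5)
There is a genuine gap. Your addition argument is routed through the waypoint $D(r)\le a^kb^k=(ab)^k$, and that waypoint is false for members, so no choice of the ``terms controlled by $a,b$'' can close the chain.

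Take $k=4$, the group $\mathbb{Z}/4$, and $\mu=\nu$ with $(\mu(0),\mu(1),\mu(2),\mu(3))=(0.3934,\,0.3,\,0.1533,\,0.1533)$. This is a boundary member of $\body_4$: $p_2^\down=3/10$ and $p_4^\down=\psi_4(3/10)=0.1533$, so $D(\mu)=D(\nu)=0.7^4$ and $a=b=0.7$. The sum has law $r\approx(0.2702,\,0.2830,\,0.2341,\,0.2126)$. Hence $D(r)\approx1-0.2702-3(0.2126)\approx0.092$, while $a^kb^k=0.7^8\approx0.058$.

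So the deficit is not submultiplicative. Here $r$ lies in $\body_4$ only because $1-r_2^\down\approx0.730$ is far above $ab=0.49$. Your bound $1-r_2^\down\ge ab$ is true, but it throws away exactly the slack that is needed.

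The underlying problem is that membership is the monotone comparison $r_k^\down\ge\psi_k(r_2^\down)$. It must be proved by setting a lower bound for the smallest atom against $\psi_k$ of an upper bound for the second atom, on the increasing branch of $\psi_k$. Your $D$ entangles the two atoms. The paper works directly with this comparison:
\begin{itemize}
\item Rearrangement gives $r_k^\down\ge A(x,y)$, the anti-aligned pairing. Your bound $r_z\ge\nu_k^\down$ is much weaker.
\item Rearrangement also gives $r_2^\down\le B(x,y)$, which is what your Latin-square observation yields.
\item A cap lemma gives $B\le\bar t_k\le t_k^*$, so $B$ lies on the increasing branch.
\item The real content is the scalar inequality $A\ge\psi_k(B)$. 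It comes from tangent-line bounds for $\psi_k$ at each input's second atom, a crossing argument showing that one of the two tangent margins is nonnegative, and a one-variable polynomial-band inequality.
\end{itemize}

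The multiplicative half is not yet an argument. Your product idea does work where the zero atom is the second largest. There $D(\mu)=(1-\mu(0))\,d_0(\mu')$, with $\mu'$ the conditional law on the nonzero elements. Since $d_0(x)=1-n\min_i x_i$ is submultiplicative under quasigroup convolution, this is the $\mathcal Z_k$ computation in the introduction. But the idea fails once the zero atom is largest, and that is exactly where products drift, since the zero atom only grows. The paper needs several further ingredients:
\begin{itemize}
\item the K/D dichotomy;
\item for K$\times$K, the floor--residual reduction together with the multiplicative $h$-inequality and the K-rank inequality;
\item for K$\times$D, the sharp mixed inequality;
\item for the D$\times$D branch $B>z$, the path-transfer and stationary-reserve estimates.
\end{itemize}

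Finally, your fallback reduction to extreme points presupposes convexity, which $\body_k$ lacks. The midpoint of two distinct point masses has $p_2^\down=1/2$ and $p_k^\down=0<\psi_k(1/2)$, so mixing a factor with translated point masses can leave the body. In any case, the counterexample above is already a boundary law with only three distinct atom values. Restricting to such laws would therefore not rescue the $D(r)\le(ab)^k$ route.
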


The headline claim of the abstract is its immediate corollary.

\begin{corollary}[read-once value distributions]\label{cor:readonce}
Let $\F$ be a finite field of order $k\ge4$.  The value distribution of
every read-once polynomial function over $\F$ on independent uniform
inputs belongs to $\body_k$.
\end{corollary}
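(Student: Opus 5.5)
The plan is to derive the corollary from Theorem~\ref{thm:body} by structural induction on a read-once formula, together with the base-case fact that $\body_k$ contains every point mass and the uniform law (Proposition~\ref{prop:B-base-laws}). Fix a read-once formula $\Phi$ over $\F$ computing the polynomial function in question, and let $X_1,\dots,X_n$ be independent and uniform on $\F$. For each node $v$ of the formula tree, write $\Phi_v$ for the subformula rooted at $v$, and write $\mu_v$ for the law of $\Phi_v(X_1,\dots,X_n)$. We will prove that $\mu_v\in\body_k$ for every node $v$, arguing by induction on the height of $v$.

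\emph{Leaves.} There are two kinds of leaf.
\begin{itemize}
\item A leaf labelled by a constant $c\in\F$ has law $\delta_c$, which is a point mass.
\item A leaf labelled by a variable $X_i$ has the uniform law.
\end{itemize}
Both laws lie in $\body_k$ by Proposition~\ref{prop:B-base-laws}.

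\emph{Gates.} Let $v$ be a $+$ or $\times$ gate with children $u$ and $w$. Because the formula is read-once, the sets of variables occurring in $\Phi_u$ and in $\Phi_w$ are disjoint. A constant leaf contributes no variables, so it is trivially disjoint from everything.

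\begin{itemize}
\item \emph{Independence.} $\Phi_u(X)$ is a measurable function of $(X_i)_{i\in I}$ and $\Phi_w(X)$ is a function of $(X_j)_{j\in J}$, where $I\cap J=\emptyset$. Since the $X_i$ are independent, $\Phi_u(X)$ and $\Phi_w(X)$ are independent random variables.
\item \emph{Law at the gate.} By independence, the law of their sum is the additive convolution $\mu_u\oplus\mu_w$ on the group $(\F,+)$, and the law of their product is the multiplicative convolution $\mu_u\otimes\mu_w$ on $(\F,\cdot)$.
\item \emph{Staying in the body.} The induction hypothesis gives $\mu_u,\mu_w\in\body_k$. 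The group $(\F,+)$ is a finite abelian group of order $k\ge4$, so Theorem~\ref{thm:body} shows that $\mu_u\oplus\mu_w\in\body_k$. Theorem~\ref{thm:body} likewise gives $\mu_u\otimes\mu_w\in\body_k$ for the field $\F$.
\end{itemize}
Applying this at the root shows that the output law lies in $\body_k$.

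Two points of bookkeeping remain.

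First, a ``read-once polynomial function'' is a function that admits \emph{some} read-once expression. The value distribution depends only on the function, so it suffices to pick one such expression and run the induction above on it.

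Second, the function's nominal input set may include variables that do not appear in the expression. These extra variables do not affect the value distribution, because the output is independent of them. The same holds for a formula consisting of a single leaf.

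The only step with real content is independence at each gate. That step follows from disjointness of variable sets together with the standard fact that functions of disjoint blocks of independent variables are independent. All of the substance is carried by Theorem~\ref{thm:body}.
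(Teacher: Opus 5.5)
Your proof is correct and follows the paper's own argument: induction on the formula tree, with leaves handled by Proposition~\ref{prop:B-base-laws}, and gates handled by independence of disjoint-variable subtrees together with Theorem~\ref{thm:body}. Your extra bookkeeping (fixing one read-once expression for the function, and discarding variables that do not occur in it) is harmless and makes explicit what the paper leaves implicit. One notational point: your $n$ for the number of variables clashes with the paper's $n=k-1$.
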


\begin{proof}
Induct on the formula tree.  The law of a leaf is the uniform law (a
variable) or a point mass (a constant), and both lie in $\body_k$ by
Proposition~\ref{prop:B-base-laws}.  At every gate the two subtrees
involve disjoint variables, so their laws are independent and the output
law is their additive or multiplicative convolution, which remains in
$\body_k$ by Theorem~\ref{thm:body}.
\end{proof}

Call a binary operation $\star$ on a finite set $Q$ a
\emph{quasigroup operation} if every left and right translation
\[
 x\longmapsto a\star x,\qquad x\longmapsto x\star a
\]
is a permutation of $Q$.  If $Q$ has a distinguished element $0$, call
an operation $\diamond$ an \emph{absorbing-zero quasigroup operation} if
$0\diamond x=x\diamond0=0$ and the restriction of $\diamond$ to
$Q^\times:=Q\setminus\{0\}$ is a quasigroup operation.  Equivalently, under
absorption, left and right translation by every nonzero element is injective
on $Q$.

\begin{theorem}[quasigroup extension]\label{thm:quasigroup}
Let $Q$ have order $k\ge4$.
\begin{enumerate}
\item Convolution with respect to any quasigroup operation on $Q$
preserves $\body_k$.
\item Convolution with respect to any absorbing-zero quasigroup operation
on $Q$ preserves $\body_k$.
\end{enumerate}
Consequently, an ordinary quasigroup addition and an absorbing-zero
quasigroup multiplication both preserve $\body_k$, with no compatibility
law between the two operations.
\end{theorem}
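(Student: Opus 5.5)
We will prove Theorem~\ref{thm:quasigroup} directly; Theorem~\ref{thm:body} is then the special case of the additive group and of field multiplication (absorbing zero, with the multiplicative group a quasigroup). For a quasigroup operation, the convolution $r=p\star q$ is given by
\[
 r_z=\sum_{x}p_x\,q_{\lambda_x^{-1}(z)},
\]
where $\lambda_x$ is left translation by $x$. Thus $r=\sum_x p_x\,\sigma_x(q)$ is a mixture of permutations of $q$. Symmetrically, it is a mixture of permutations of $p$. The plan is to get a lower bound on $r_k^\down$ and an upper bound on $r_2^\down$ purely from this mixture structure.

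\emph{Step 1: bounds on the atoms of $r$.} Write $a=p_1^\down$, $b=p_2^\down$, $m=p_k^\down$ and similarly $a',b',m'$ for $q$. Each atom of $r$ is a sum $\sum_x p_x q_{\pi(x)}$ over a bijection $\pi$. The rearrangement inequality then bounds every $r_z$ between the anti-sorted and the sorted pairing sums. Sorted pairing alone is too weak for the second largest atom, so for $r_2^\down$ we use a different bound. Two distinct values $z\ne z'$ use bijections $\pi,\pi'$ with $\pi(x)\ne\pi'(x)$ for every $x$. Hence at most one of them can pair the top atom of $p$ with the top atom of $q$. This gives
\[
 r_2^\down\le \max\bigl(\text{pairing sums avoiding } (1,1)\bigr)\le 1-(1-a)(1-a')-\ldots
\]
In practice the simpler bound $1-r_2^\down\ge(1-b)(1-b')$-type estimates should suffice. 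The minimum satisfies $r_k^\down\ge \sum_i p_i^\down q_{k+1-i}^\down$.

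\emph{Step 2: reduce to a two-parameter inequality.} Set $s=1-b$, $s'=1-b'$. The target is
\[
 r_k^\down\ge \psi_k(r_2^\down).
\]
Since $\psi_k$ is increasing on the relevant range (for $t\le 1-k^{-1/(k-1)}$) and then decreasing, the monotonicity must be handled carefully. This is likely why the definition uses $p_2^\down$ rather than $p_1^\down$: the constraint $r_2^\down\le$ something is controlled. We then use the hypotheses $m\ge\psi_k(b)$ and $m'\ge\psi_k(b')$, together with the crude facts that the $k-2$ middle atoms lie in $[m,b]$ and that $a\le 1-b-(k-2)m$, to lower bound the anti-sorted pairing sum. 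The intended key inequality is multiplicativity: $(1-b)^k$-type quantities compose as $s^k s'^k$, while the linear part $1-t-(k-1)m$ behaves subadditively. Concretely, we aim to show
\[
 1-r_2^\down-(k-1)r_k^\down\le (1-r_2^\down)^k,
\]
using $1-r_2^\down\gtrsim$ a combination of $s,s'$ and the convexity of $u\mapsto u^k$.

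\emph{Step 3: the absorbing-zero case.} Here $r_0=p_0+q_0-p_0q_0$, and the nonzero atoms form a quasigroup convolution of the conditional laws scaled by $(1-p_0)(1-q_0)$. We split into cases according to whether $0$ is the largest atom, the second largest, or neither, and reuse Step 1 on the $(k-1)$-point quasigroup.

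The main obstacle is Step 2: obtaining a lower bound on the anti-sorted pairing $\sum_i p_i^\down q_{k+1-i}^\down$ that is sharp enough in the extremal configurations. These are the point masses and the boundary laws with $m=\psi_k(b)$, where one atom is large and the remaining atoms are split between $b$ and $m$. We would first reduce by convexity/extremality to laws of the shape $(a,b,\dots,b,m,\dots,m)$ and then verify a polynomial inequality in $(b,b')$, using $k\ge4$ exactly to control the $(1-t)^k$ terms. We expect $k=3$ to fail at this step.
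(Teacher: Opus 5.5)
\textbf{The proposal has a genuine gap: it does not yet prove either gate.} Step~2 is the only place where the particular shape of $\body_k$ enters, and it is stated only as an aim. Step~3 assumes the additive template transfers to the absorbing-zero operation, and it does not; this is the decisive gap.

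\textbf{Why Step~3 fails.} The rearrangement bounds of Step~1 do carry over to the nonzero blocks. But as soon as one factor is K-type (zero atom largest), the product's zero atom is largest. The product's second atom is then its \emph{largest nonzero} atom, and its rearrangement bound is the aligned pairing $C=\sum_i x_iy_i$ of the sorted nonzero blocks. Your ``at most one output pairs the two tops'' trick for $B$ is useless here, and bounds in terms of the factors' second atoms are far too weak. Concretely, for $k=4$ take $\mu=\nu$ on $\F_4$ with $\mu(0)=\mathcal H_4(1/10)=0.7374$, $\mu(1)=1/10$, and mass $\psi_4(1/10)=0.0813$ on each of the other two elements. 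The product has:
\begin{itemize}
\item zero atom $\approx0.931$, top nonzero atom $C\approx0.02322$, and two bottom atoms $A\approx0.02287$;
\item membership, since $\psi_4(C)\approx0.0222\le A$.
\end{itemize}
Yet your bound $1-(1-b)(1-b')=0.19$ gives $\psi_4(0.19)\approx0.1265$. Even the sharper bound $C\le ps=0.02626$ (with $p=1/10$ the top nonzero atom and $s=0.2626$ the nonzero mass) gives $\psi_4(0.02626)\approx0.0249>A$.

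So one needs a coupled comparison $A\ge\psi_k(C)$ between the bottom and top nonzero atoms of the product. This is the paper's K-core:
\begin{itemize}
\item the floor/residual split $A\ge c$, $C\le c+E$;
\item reduction to a common effective rank;
\item the rank inequality $\kappa^2\ge\lambda k/2$ combined with $Mh(Mt_1t_2)\ge h(t_1)h(t_2)$ for $h(t)=\delta_k(t)/t^2$;
\item the sharp mixed inequality for K$\times$D.
\end{itemize}
Two D-type factors need a further mechanism: $c\ge\psi_k(z)$ from $R\le r^k$ and $S\le s^k$, and, when $B>z$, $c\ge\psi_k(B)$ via the path-transfer lemma and the reserve $k(\zeta+1)c_0\ge1$. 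Your Step~3 contains none of these ingredients.

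\textbf{The additive gate can be completed along your lines.} For the ordinary quasigroup gate your outline works, and more briefly than the paper's tangent/crossing/polynomial-band argument, but the steps are missing from the proposal.
\begin{itemize}
\item The coefficients of $A$ in each factor increase along the sorted order. So for fixed second atoms $b,b'$, $A$ is minimized over members at the sharp profiles $(\mathcal H_k(b),b,\psi_k(b),\dots,\psi_k(b))$, with a single copy of $b$. This gives $kA\ge1-\Phi(1-b)\Phi(1-b')$, where $\Phi(v):=1-k\psi_k(1-v)>0$.
\item Also $p_2^\down\le B\le b+b'-2bb'\le1-(1-b)(1-b')$.
\item On $[0,-\log(1-t_k^*)]$ the function $\alpha\mapsto\log\Phi(e^{-\alpha})$ vanishes at $0$, is decreasing, and is convex. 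With $X=e^{-\alpha}$, its second derivative has the sign of $X^{k-1}(k^2-k(k-1)X)-1$. This expression increases in $X$ and equals $(k-1)t_k^*\ge0$ at $X=1-t_k^*$.
\item Hence $\Phi(v)\Phi(w)\le\Phi(\max(vw,1-t_k^*))$ for $v,w\ge1-\bar t_k$. That is, $A\ge\psi_k(\min(1-(1-b)(1-b'),t_k^*))\ge\psi_k(p_2^\down)$, which also disposes of the branch problem you flagged.
\end{itemize}

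Finally, your expectation that $k=3$ fails is unfounded: the paper recalls that $\body_3$ is exactly the stable region of the three-element case.
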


Theorem~\ref{thm:body} extends to every order $k\ge4$ the main theorem
of~\cite{yashunsky2021}, which settles the 3-element case of this
program: the value distribution of a read-once polynomial over a
3-element field --- the law of its output on independent uniform
inputs --- satisfies
$\max p_i - \min p_i \le (\max p_i + \min p_i)^3$, and the proof
exhibits a region, stable under both operations, whose membership
condition is exactly this inequality.  The body recovers that region at
$k=3$: unfolding \eqref{eq:psi-def} in \eqref{eq:Bq-def} writes
membership in $\body_k$ as
$1-p_2^\down-(k-1)p_k^\down\le(1-p_2^\down)^k$, and for $k=3$,
eliminating the middle atom, this becomes
$p_1^\down - p_3^\down \le (p_1^\down + p_3^\down)^3$.  The extension
was never intended to be the naive one: the region defined by
$p_1^\down-p_k^\down\le(1-p_2^\down)^k$ itself is not preserved at
larger orders --- already over $\F_5$ it fails to be closed under
multiplicative convolution (Remark~\ref{rem:obstruction}) --- so the
task was to find a definition of the body that works for every
$k\ge4$ and matches the known region at $k=3$.

Although the proofs in~\cite{yashunsky2021} take clear advantage of the
very specific structure of a 3-element field, Theorem~\ref{thm:quasigroup}
makes precise that the present criterion is indifferent to the finer
algebraic properties of the carrier.  The proof uses only the permutation
structure of the left and right translations, together with absorption at
zero in the multiplicative case.  In particular, the group laws that
motivate the field formulation of Theorem~\ref{thm:body} are not needed for
the stability argument itself.

The AI-generated proof is clearly distinct from the one
in~\cite{yashunsky2021}, and the body itself is not an obvious
generalization of the 3-element case.  Yet some traits seem to be
inherited.  The initial, non-AI attempts at a generalization were
rooted in the following observation.  On the part of the simplex where
the zero element of the field carries the middle probability, so that
$p_2^\down$ is its atom, the proved inequality reads exactly
$p_1^\down - p_3^\down \le (1-p_2^\down)^3$, which may be rewritten as
\begin{equation}\label{eq:3elem-ineq}
\frac{p_1^\down}{1-p_2^\down} - \frac{p_3^\down}{1-p_2^\down}
\le (1-p_2^\down)^2 .
\end{equation}
The left-hand side is the spread of the conditional probabilities on
the multiplicative group of the field --- a measure of non-uniformity
of the conditional law, approaching $0$ as that law approaches the
uniform one.  The closure of the 3-element body under field
multiplication of random variables on this part of the simplex is due
to the fact that the spread is submultiplicative with respect to
convolution of distributions on the multiplicative group.  Hence the
attempts at generalizing the 3-element result to bigger fields focused
on finding a non-uniformity measure for the multiplicative group that
would be submultiplicative with respect to convolution, because the
spread itself no longer is once $k\ge5$.  The
paper~\cite{yashunsky2024} finds such a measure for arbitrary~$k$, but
further progress stalled there.

To our surprise, a much simpler non-uniformity measure, introduced
in~\cite{yashunsky2023}, surfaces in the definition of $\body_k$ and
lets one trace the parallels between the 3-element and the $k$-element
case.  Write $n:=k-1$; over a field $\F$ of order $k$ this is the order
of the multiplicative group $\F^\times$.  For a distribution
$x=(x_1,\dots,x_n)$ on $n$ points, define $d_0(x) = 1 - n\min_i x_i$.
By~\cite[Theorem~5]{yashunsky2023}, if $*$ is the convolution of
distributions with respect to a quasigroup operation --- precisely the
generality used in Theorem~\ref{thm:quasigroup} --- then
\[
 d_0(x * y)\le d_0(x)\,d_0(y).
\]

Through this inequality, a zero-anchored companion of the body is
closed under multiplicative convolution outright.  For a law $x$
on~$\F$, write $x(0)$ for its zero atom and, when $x(0)<1$, let $x'$ be
the conditional law on $\F^\times$; consider
\[
 \mathcal Z_k:=\Bigl\{x:\ \min_{g\ne0}x(g)\ \ge\ \psi_k\bigl(x(0)\bigr)\Bigr\},
\]
which for $x(0)<1$ is the condition
$d_0(x')\le(1-x(0))^{n}$.  If $x$ and $y$ are the laws of independent
random variables, their product is zero with probability
$z=1-(1-x(0))(1-y(0))$, and conditionally on being nonzero its law is
$x'*y'$, the group convolution on $\F^\times$.  Submultiplicativity
then closes the region in one line:
\[
 d_0(x'*y')\le d_0(x')\,d_0(y')
 \le(1-x(0))^{n}(1-y(0))^{n}=(1-z)^{n},
\]
so multiplicative convolution preserves $\mathcal Z_k$, with no
assumption on where the zero atom ranks among the others; the same
mechanism drives the multiplication-preserved families
of~\cite{yashunsky2021}.  Dividing the membership condition
$1-p_2^\down-(k-1)p_k^\down\le(1-p_2^\down)^k$ by $1-p_2^\down$ shows
that $\body_k$ coincides with $\mathcal Z_k$ on the part of the simplex
where the zero atom is the second largest.  Additive convolution,
however, rules $\mathcal Z_k$ out as the invariant: convolving with a
point mass permutes the atoms and moves the zero anchor (the uniform
law on $\F^\times$ lies in $\mathcal Z_k$; its translate by a nonzero
element does not), so a region stable under both operations cannot be
anchored at the zero element.  The body is exactly the sorted
transplant of the zero-anchored shape: anchored at the second and
smallest atoms, it agrees with $\mathcal Z_k$ where the zero atom is
second largest and extends the same shape to the rest of the simplex
--- closely matching how the preserved region of the 3-element case is
assembled from a zero-anchored family by the symmetries of the simplex.
Even the proof difficulties appear in the same places: under
multiplication the zero atom can only grow, so products drift toward
the part of the simplex where the zero atom is largest; there the
zero-anchored bound genuinely fails, and that is precisely the K-type
regime of the dichotomy of Section~\ref{sec:mul-reduction}.

The body is a full-dimensional region of the simplex containing the
uniform law and all point masses (Propositions~\ref{prop:B-base-laws}
and~\ref{prop:B-interior}), and its boundary carries the one-parameter
sharp family $D_k(u)$ of Section~\ref{subsec:sharp}, on which the
multiplicative estimates are exact.  Writing $\mathfrak v_k$ for its
normalized relative volume, one has an explicit one-dimensional integral
for $\mathfrak v_k$ and the sharp exponential rate
$\mathfrak v_k^{1/k}\to0.2183305369\ldots$
(Appendix~\ref{app:exact-volume}).

The proof of Theorem~\ref{thm:body} via Theorem~\ref{thm:quasigroup} has three parts.

\begin{enumerate}
\item Section~\ref{sec:invariant} develops the scalar function
$\psi_k$, the cap $\bar t_k$, the quantitative nonvacuity of $\body_k$,
the distinguished base laws, and the sharp family.
\item Section~\ref{sec:addition} proves the ordinary quasigroup gate by
rearrangement, a tangent expansion, a crossing argument, and one
polynomial-band inequality.
\item Sections~\ref{sec:mul-reduction}--\ref{sec:assembly} prove the
absorbing-zero quasigroup gate.  The zero atom is separated from the
nonzero quasigroup, each factor is classified as K or D according to whether
its zero atom is largest, and rearrangement reduces the gate to K$\times$K,
K$\times$D, and D$\times$D scalar cores.
\end{enumerate}

The exact relative-volume formula and its sharp exponential scale are derived
separately in Appendix~\ref{app:exact-volume}.
Appendix~\ref{sec:lean} describes the complete formalization of these results
in Lean~4.

\section{The body \texorpdfstring{$\body_k$}{Bk}}\label{sec:invariant}

This section collects the notation, the scalar facts about $\psi_k$,
and the basic geometry of the body: its distinguished laws, its quantitative
nonvacuity, and the sharp boundary family on which the later estimates are
tight.  The exact volume derivation is deferred to
Appendix~\ref{app:exact-volume}.

\subsection{Notation}\label{subsec:notation}

Throughout the paper, $k$ is the order of the underlying structure and
$n:=k-1$.  We write $Q$ for a finite carrier, $\star$ for an ordinary
quasigroup operation, and $\diamond$ for an absorbing-zero quasigroup
operation.  For the field specialization, $\F$ is a finite field of order
$k$, $\F^\times$ its multiplicative group, and $G$ a finite abelian group
of order $k$.  We write
$\Delta_{k-1}$ for the probability simplex on $k$ points.  The letters
$\mathrm K$ and $\mathrm D$ are reserved for the two input \emph{types} of
the multiplication argument (Section~\ref{sec:mul-reduction}); they are
labels, never variables.  Probability laws are $\mu,\nu$, and
$p_i^\down(\mu)$ denotes the $i$th largest atom of $\mu$, with the
argument omitted when the law is clear from context.  When sorted atoms
need to be distinguished from coordinates in their original labeling, we
use the superscript $\down$, as in $p_1^\down\ge\cdots\ge p_k^\down$; for
a vector explicitly declared sorted, we may omit it.

For laws $\mu,\nu$ on a finite abelian group (respectively, on a finite
field), write $\mu\oplus\nu$ and $\mu\otimes\nu$ for the laws of $X+Y$
and $XY$ for independent $X\sim\mu$, $Y\sim\nu$:
\[
 (\mu\oplus\nu)(g)=\sum_{h+h'=g}\mu(h)\nu(h'),\qquad
 (\mu\otimes\nu)(g)=\sum_{hh'=g}\mu(h)\nu(h').
\]
For a general operation, its convolution is defined by the same fiber sum.
In Sections~\ref{sec:addition}--\ref{sec:assembly}, the operation is clear
from context; we retain $\otimes$ for the absorbing-zero operation
$\diamond$.

Table~\ref{tab:notation} collects every symbol that is shared between
sections; each is also introduced in running text where it first appears.

\begin{table}[p]
\centering\small
\begin{tabular}{@{}l@{\quad}p{7.2cm}@{\quad}l@{}}
\toprule
Symbol & Meaning & Introduced\\
\midrule
\multicolumn{3}{@{}l}{\emph{Global}}\\[1pt]
$k$;\ $n=k-1$ & order of the structure ($k\ge4$); order of $\F^\times$ &
\S\ref{subsec:notation}\\
$\F$, $\F^\times$;\ $G$ & field of order $k$, its multiplicative group;
abelian group of order $k$ & \S\ref{subsec:notation}\\
$Q$;\ $\star$, $\diamond$ & finite carrier; ordinary quasigroup and
absorbing-zero quasigroup operations & \S\ref{subsec:notation}\\
$\mu,\nu$;\ $\oplus$, $\otimes$ & the two input laws; additive and
multiplicative convolution & \S\ref{subsec:notation}\\
$p_i^\down(\mu)$ & $i$th largest atom of $\mu$ & \S\ref{subsec:notation}\\
$\Delta_{k-1}$ & probability simplex on $k$ points &
\S\ref{subsec:notation}\\
$\psi_k$;\ $\delta_k$ & the scalar function \eqref{eq:psi-def}; its gap
$\delta_k(t)=t-\psi_k(t)$ & \S\ref{subsec:scalar}\\
$t_k^*$;\ $\bar t_k$ & end of the increasing branch \eqref{eq:tstar};
the cap \eqref{eq:cap-root} & \S\ref{subsec:scalar}\\
$\body_k$ & the body & \eqref{eq:Bq-def}\\
$D_k(u)$;\ $\mathcal H_k$ & sharp boundary family; its complementary
atom & \eqref{eq:sharp-family}, \eqref{eq:sharp-H}\\
$\mathfrak v_k$;\ $c_*$, $\Lambda_*$, $\beta_*$ & relative volume of
$\body_k$; its saddle point, rate, and root limit &
\S\ref{sec:invariant}, App.~\ref{app:exact-volume}\\
$\mathrm K$, $\mathrm D$ & input types: zero atom maximal / not &
\S\ref{sec:mul-reduction}\\
$A$, $B$, $C$ & anti-aligned, runner-up, and aligned pairings: of full
$k$-vectors in Section~\ref{sec:addition}, of nonzero blocks in
Sections~\ref{sec:mul-reduction}--\ref{sec:assembly} &
\eqref{eq:add-AB}, \eqref{eq:mul-AC}--\eqref{eq:mul-B}\\
\midrule
\multicolumn{3}{@{}l}{\emph{Addition gate (Section~\ref{sec:addition});
$x,y$ are the sorted input $k$-vectors, $w$ a generic sorted member}}\\[1pt]
$T_w$;\ $C_i(w)$, $\Sigma C(w)$ & tangent margin of the bottom pair;
tangent coefficients and their total &
\eqref{eq:add-K}--\eqref{eq:add-sumC}\\
$U_1$, $U_2$;\ $F(w)$ & the two tangent lower bounds; the single-member
margin & Lemma~\ref{lem:add-tangent}, \eqref{eq:add-single}\\
\midrule
\multicolumn{3}{@{}l}{\emph{Multiplication gate
(Sections~\ref{sec:mul-reduction}--\ref{sec:assembly}); symbols for
$\mu$, analogues for $\nu$ in parentheses}}\\[1pt]
$a$ ($b$);\ $r$ ($s$) & zero atom; nonzero mass $1-a$ &
\eqref{eq:floor-residual-one}\\
$m$ ($\ell$) & global floor $p_k^\down(\mu)$ &
\eqref{eq:floor-residual-one}\\
$x$ ($y$);\ $\hat x$ ($\hat y$);\ $R$ ($S$) & sorted nonzero block;
residual $x-m\one$; residual total & \eqref{eq:floor-residual-one}\\
$z$;\ $c$;\ $w$ & product zero atom $1-rs$; product baseline; product
nonzero vector $c\one+\hat x*\hat y$ & \eqref{eq:mul-baseline},
\eqref{eq:nonzero-convolution}\\
$p$ ($v$);\ $U$ ($V$) & largest nonzero atom; top residual $p-m$ &
\S\S\ref{sec:mul-reduction}--\ref{sec:K-core}\\
$E$;\ $\Theta$;\ $\lambda$;\ $\kappa$ & alignment excess $\min(RV,SU)$;
aligned bound $c+E$; effective rank; reduced rank &
\eqref{eq:KK-floor}--\eqref{eq:KK-T}\\
$h$;\ $\varphi$ & normalized gaps $\delta_k(t)/t^2$ and $\psi_k(t)/t$ &
\eqref{eq:h-def}, Lemma~\ref{lem:K-rank}\\
$s_0$;\ $\eta$ & sharp mixed factor $S^{1/k}$; its floor $\psi_k(1-s_0)$
& \eqref{eq:KD-sharp-factor}\\
$t$ ($u$);\ $\alpha$ ($\beta$);\ $P$ ($Q$) & full second atom; floor
excess $t-m$; complement $1-t-nm$ (scalars of
Section~\ref{sec:D-core}; the carrier $Q$ of the introduction and
Section~\ref{sec:mul-reduction} does not occur there) &
\eqref{eq:D-parameters}\\
$\rho$ ($\sigma$);\ $\tau$;\ $\zeta$;\ $\pi$ & residual excess $R-P$;
balanced excess $\min(\beta\rho,\alpha\sigma)$; floor ratio
$\ell/\beta+m/\alpha$; combined atom $t+u-tu$ & \S\ref{sec:D-core}\\
$E_0$, $E_1$, $E_2$;\ $c_0$;\ $H_0$ & runner-up excess bounds; boundary
baseline; $H_0=c_0+E_0$ & \S\ref{sec:D-core}\\
\bottomrule
\end{tabular}
\caption{Shared notation.  Any symbol introduced inside a single lemma or
proof is local to it, and the appendices are notationally
self-contained.}
\label{tab:notation}
\end{table}

\subsection{The scalar function}\label{subsec:scalar}

Set $\delta_k(t):=t-\psi_k(t)$.  Direct differentiation gives
\begin{align}
  \psi_k'(t)&=\frac{k(1-t)^{k-1}-1}{k-1},
  &\psi_k''(t)&=-k(1-t)^{k-2},\label{eq:psi-derivatives}\\
  1-t-(k-1)\psi_k(t)&=(1-t)^k,
  &\psi_k(1-u)&=\frac{u-u^k}{k-1}.\label{eq:psi-identities}
\end{align}
Thus $\psi_k$ is concave, $0\le\psi_k(t)\le t$ on $[0,1]$, and
$\delta_k$ is convex and nondecreasing.  Concavity and $\psi_k(0)=0$ also
give
\begin{equation}\label{eq:star-shaped}
  t\psi_k'(t)\le\psi_k(t).
\end{equation}
The increasing branch of $\psi_k$ is $[0,t_k^*]$, where
\begin{equation}\label{eq:tstar}
  t_k^*:=1-k^{-1/(k-1)}.
\end{equation}

Define the cap point $\bar t_k\in(0,1)$ by
\begin{equation}\label{eq:cap-root}
  2\bar t_k+(k-2)\psi_k(\bar t_k)=1.
\end{equation}
The left side is strictly increasing, since
$\psi_k'\ge-1/(k-1)$.  For $k\ge4$ one has
\begin{equation}\label{eq:cap-increasing}
  \bar t_k\le t_k^*.
\end{equation}
Indeed, evaluation at $t_k^*$ reduces this to
$(k+2)^{k-1}\le k^k$.  It is immediate for $4\le k\le7$, and for
$k\ge8$ follows from $(1+2/k)^{k-1}<e^2<8\le k$.  The scalar estimates in
Appendices~\ref{app:mixed-sharp} and~\ref{app:D-reserve} abbreviate
$T:=\bar t_k$.

\subsection{Distinguished laws and nonvacuity}

\begin{proposition}[base laws]
\label{prop:B-base-laws}
The body $\body_k$ contains the uniform law and every point mass.
\end{proposition}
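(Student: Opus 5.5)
The plan is a direct check of the defining inequality \eqref{eq:Bq-def}, $p_k^\down\ge\psi_k(p_2^\down)$, for the two kinds of laws, using only the closed form \eqref{eq:psi-def} of $\psi_k$.

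\emph{Point masses.} For a point mass the sorted atoms are $p_1^\down=1$ and $p_2^\down=\cdots=p_k^\down=0$. Since $\psi_k(0)=(1-0-1^k)/(k-1)=0$, the condition reads $0\ge0$, which holds. Equivalently, in the unfolded form $1-p_2^\down-(k-1)p_k^\down\le(1-p_2^\down)^k$ both sides equal $1$.

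\emph{Uniform law.} Here $p_2^\down=p_k^\down=1/k$. By the first identity in \eqref{eq:psi-identities}, membership is equivalent to
\[
 1-\tfrac1k-(k-1)\tfrac1k\le\bigl(1-\tfrac1k\bigr)^k .
\]
The left side is $0$ and the right side is positive, so the inequality holds strictly. Equivalently, $\psi_k(1/k)<1/k$, which also follows from $\delta_k(t)=t-\psi_k(t)=((k-1)t-1+t+(1-t)^k)/(k-1)$: at $t=1/k$ this equals $(1-1/k)^k/(k-1)>0$.

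No step is a real obstacle. The only care needed is to read the ordering conventions correctly, namely that $p_2^\down$ is the second-largest atom (which is $0$ for a point mass), and to note that ties in the sorting do not affect the check. The strict inequality for the uniform law is worth recording, since it presumably feeds the later full-dimensionality statement (Proposition~\ref{prop:B-interior}).
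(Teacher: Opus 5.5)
Your proposal is correct and matches the paper's proof: both check $p_k^\down\ge\psi_k(p_2^\down)$ directly, using $p_2^\down=p_k^\down=0$ for a point mass and $p_2^\down=p_k^\down=1/k$ for the uniform law. The only difference is cosmetic. The paper cites the general bound $\psi_k(t)\le t$ for the uniform case, whereas you compute the exact slack $(1-1/k)^k/(k-1)$, which is the quantity $m_k$ reused in Proposition~\ref{prop:B-interior}.
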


\begin{proof}
A point mass has $p_2^\down=p_k^\down=0$.  The uniform law satisfies
$p_k^\down=1/k\ge\psi_k(1/k)$ because $\psi_k(t)\le t$.
\end{proof}

The invariant is not merely nonempty: it occupies a full-dimensional region
of the probability simplex.  The following quantitative certificate is
uniform in $k$ and has a short machine-checkable proof.
The exact fraction and its sharp exponential scale are summarized immediately
afterward and proved in Appendix~\ref{app:exact-volume}.

\begin{proposition}[geometric nonvacuity]\label{prop:B-interior}
Let $\mathbf u=(1/k,\ldots,1/k)$ be the uniform law and put
\[
 m_k:=\frac1k-\psi_k(1/k)
     =\frac{(1-1/k)^k}{k-1}
     =\frac{(k-1)^{k-1}}{k^k}>0.
\]
Every probability vector $p$ satisfying
$\lVert p-\mathbf u\rVert_\infty<m_k/2$ belongs to $\body_k$.
Consequently $\mathbf u$ is a relative-interior point of $\body_k$ in the probability simplex.  More
quantitatively, if volume is normalized so that the whole $(k-1)$-simplex
has volume one, then
\begin{equation}\label{eq:B-volume-lower}
 \operatorname{vol}_{k-1}(\body_k)
 \ge \left(\frac{(k-1)^{k-1}}{2k^k}\right)^{k-1}>0.
\end{equation}
\end{proposition}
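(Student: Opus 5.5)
The plan is to compare the floor $p_k^\down$ with $\psi_k(p_2^\down)$ directly near $\mathbf u$, using only that $\psi_k$ is nondecreasing on an interval containing $1/k$, and then to bound the volume of the $\ell_\infty$-ball by that of an explicit simplex-shaped piece.

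First the identity for $m_k$. By \eqref{eq:psi-identities}, $1/k-\psi_k(1/k)=\bigl(1/k-(1-1/k)+(1-1/k)^k\bigr)/(k-1)+\dots$; it is cleaner to write $1-t-(k-1)\psi_k(t)=(1-t)^k$ at $t=1/k$. This gives $(k-1)\psi_k(1/k)=1-1/k-(1-1/k)^k$, hence
\[
 (k-1)\bigl(1/k-\psi_k(1/k)\bigr)=(1-1/k)^k ,
\]
which is the stated formula $m_k=(k-1)^{k-1}/k^k>0$.

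Next, membership. Suppose $\lVert p-\mathbf u\rVert_\infty<\varepsilon:=m_k/2$. Then every atom, in particular $p_2^\down$ and $p_k^\down$, lies in $(1/k-\varepsilon,1/k+\varepsilon)$. I would use that $\psi_k$ is nondecreasing on $[0,t_k^*]$. This needs $1/k+\varepsilon\le t_k^*$; it holds since $t_k^*=1-k^{-1/(k-1)}\ge 1-e^{-\ln k/(k-1)}$ exceeds $2/k$ for $k\ge4$, a routine check, while $\varepsilon<1/k$. Monotonicity then gives
\[
 \psi_k(p_2^\down)\le\psi_k(1/k+\varepsilon).
\]
I would bound the increment by the mean value theorem with $\psi_k'\le\psi_k'(0)=1$, since $\psi_k'$ is decreasing by concavity. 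This yields $\psi_k(1/k+\varepsilon)\le\psi_k(1/k)+\varepsilon=1/k-m_k+\varepsilon$. Meanwhile $p_k^\down>1/k-\varepsilon$, so
\[
 p_k^\down-\psi_k(p_2^\down)>m_k-2\varepsilon=0 .
\]
This gives membership in $\body_k$, and a fortiori relative interiority of $\mathbf u$ because the ball is open in the simplex. In fact the crude bound $\psi_k(s)\le\psi_k(1/k)+(s-1/k)$ holds for every $s\ge1/k$ without any monotonicity, since $\psi_k'\le1$ everywhere. So the $t_k^*$ check can be dropped, which also makes the argument machine-friendly.

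Finally, the volume. Parametrize the simplex by its first $k-1$ coordinates; the normalized volume is $(k-1)!$ times the Lebesgue measure of the projection. The set of $p$ in the simplex with $\lVert p-\mathbf u\rVert_\infty<\varepsilon$ contains the set where each of the first $k-1$ coordinates satisfies $|p_i-1/k|<\varepsilon/(k-1)$, because then $|p_k-1/k|<\varepsilon$ as well. That is a cube of side $2\varepsilon/(k-1)$, so the normalized volume is at least
\[
 (k-1)!\,\bigl(2\varepsilon/(k-1)\bigr)^{k-1}=(k-1)!\,\bigl(m_k/(k-1)\bigr)^{k-1}.
\]
The target is $(m_k/2)^{k-1}$, so it suffices that $(k-1)!/(k-1)^{k-1}\ge 2^{-(k-1)}$. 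This fails for large $k$, since $(k-1)!/(k-1)^{k-1}\approx e^{-(k-1)}$. The main obstacle is therefore getting the volume constant right. The better choice is an inner shape adapted to the simplex: take $p=\mathbf u+\varepsilon' v$ with $v$ ranging over the centered simplex $\{\text{probability vectors}\}-\mathbf u$ scaled. The scaled copy $\mathbf u+\lambda(\Delta_{k-1}-\mathbf u)$ has normalized volume exactly $\lambda^{k-1}$, and its points satisfy $|p_i-1/k|\le\lambda(1-1/k)<\lambda$. Choosing $\lambda=m_k/2$ places it inside the ball, giving exactly the bound \eqref{eq:B-volume-lower}.
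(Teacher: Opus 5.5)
Your proof follows the paper's. You bound $\psi_k(p_2^\down)$ by a Lipschitz estimate at $1/k$, compare it with $p_k^\down>1/k-\varepsilon$, and place the homothetic simplex $(1-\lambda)\mathbf u+\lambda\Delta_{k-1}$ with $\lambda=m_k/2$ inside the sup-norm ball. The identity for $m_k$ and the volume step, including discarding the cube, are correct.

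The membership step, however, rests on a false claim. Your ``routine check'' $t_k^*>2/k$ fails for $4\le k\le7$. For example, $t_4^*=1-4^{-1/3}\approx0.370<1/2$ and $t_7^*\approx0.277<2/7$.

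Your fallback, which drops monotonicity, does not close this gap. It only treats $p_2^\down\ge1/k$, but the second atom of a law near $\mathbf u$ can lie below $1/k$. For such laws you have given no valid reason for $\psi_k(p_2^\down)\le\psi_k(1/k)+\varepsilon$.

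The inequality you actually need, $1/k+m_k/2\le t_k^*$, is true, because $m_k\le 1/(e(k-1))$ is small. The cleaner repair is the paper's. By \eqref{eq:psi-derivatives}, $\psi_k'$ decreases from $\psi_k'(0)=1$ to $\psi_k'(1)=-1/(k-1)$, so $|\psi_k'|\le1$ on $[0,1]$. Hence
\[
 \psi_k(p_2^\down)\le\psi_k(1/k)+|p_2^\down-1/k|<\psi_k(1/k)+\varepsilon
\]
on either side of $1/k$, with no reference to $t_k^*$.
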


\begin{proof}
Formula~\eqref{eq:psi-derivatives} gives
$|\psi_k'(t)|\le1$ on $[0,1]$.  If
$\varepsilon=\lVert p-\mathbf u\rVert_\infty$, each sorted atom differs from $1/k$
by at most $\varepsilon$.  Hence
\[
 p_k^\down\ge\frac1k-\varepsilon,
 \qquad
 \psi_k(p_2^\down)\le\psi_k(1/k)+\varepsilon.
\]
The first quantity is larger than the second when $2\varepsilon<m_k$.

For the volume claim, set $\lambda=m_k/2$ and consider the homothetic simplex
$(1-\lambda)\mathbf u+\lambda\Delta_{k-1}$.  Every one of its points is
at sup-norm distance strictly less than $\lambda$ from $\mathbf u$, so
this simplex lies in
$\body_k$.  Homothety by $\lambda$ scales $(k-1)$-dimensional volume by
$\lambda^{k-1}$, giving \eqref{eq:B-volume-lower}.
\end{proof}

The normalized-volume assertion in \eqref{eq:B-volume-lower}, including its
strict positivity, is machine-checked independently of the gate theorems; see
Appendix~\ref{sec:lean}.  In particular, the formal result certifies the
relative volume fraction itself, not only the preceding set inclusion.

Writing
\[
 \mathfrak v_k:=
 \frac{\operatorname{vol}_{k-1}(\body_k)}
      {\operatorname{vol}_{k-1}(\Delta_{k-1})},
\]
Appendix~\ref{app:exact-volume} gives an exact one-dimensional integral and
proves
\[
 \lim_{k\to\infty}\frac{\log\mathfrak v_k}{k}=\Lambda_*,
 \qquad
 \lim_{k\to\infty}\mathfrak v_k^{1/k}
 =e^{\Lambda_*}=0.218330536985878\ldots.
\]
Here $\Lambda_*=-1.521745139795282\ldots$, with its exact saddle-point
definition given in that appendix.  Thus the elementary lower bound above
states nonvacuity for every $k\ge4$, while the exact calculation identifies
the fraction's sharp exponential scale.

The body condition itself supplies the cap used throughout the proof.

\begin{lemma}[member cap]\label{lem:member-cap}
If $p\in\body_k$, then $p_2^\down\le\bar t_k$.  Consequently $\psi_k$
is nondecreasing throughout the range of every second atom of a member.
\end{lemma}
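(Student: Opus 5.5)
Write $t=p_2^\down$ and suppose, for contradiction, that $t>\bar t_k$. The plan is to play the body condition against the normalization $\sum_i p_i^\down=1$.

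First, the body condition gives the bound $p_i^\down\ge p_k^\down\ge\psi_k(t)$ for every $i\ge3$. The first atom satisfies $p_1^\down\ge t$. Summing over the $k$ sorted atoms therefore yields
\[
 1=\sum_{i=1}^k p_i^\down\ \ge\ 2t+(k-2)\psi_k(t).
\]
Here we must check that $\psi_k(t)$ may be negative only in a harmless way. Since $0\le\psi_k\le t$ on $[0,1]$ by the scalar facts of Section~\ref{subsec:scalar}, the estimate $p_i^\down\ge\psi_k(t)$ is a valid lower bound in every case.

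Next, by \eqref{eq:cap-root} and the strict monotonicity of $t\mapsto 2t+(k-2)\psi_k(t)$, which follows from $\psi_k'\ge-1/(k-1)$, we get
\[
 2t+(k-2)\psi_k(t)>2\bar t_k+(k-2)\psi_k(\bar t_k)=1
\]
whenever $t>\bar t_k$. The monotonicity is clean because the derivative is at least $2-(k-2)/(k-1)>0$. This contradicts the inequality above, so $p_2^\down\le\bar t_k$. The degenerate case $t\le\bar t_k$ needs nothing.

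For the consequence, every second atom of a member lies in $[0,\bar t_k]$. By \eqref{eq:cap-increasing}, for $k\ge4$ this interval is contained in $[0,t_k^*]$, the increasing branch of $\psi_k$, so $\psi_k$ is nondecreasing there. The only step needing care is the existence and uniqueness of $\bar t_k\in(0,1)$. The left side of \eqref{eq:cap-root} equals $0$ at $t=0$ and $2$ at $t=1$, and is continuous and strictly increasing, so the root exists and is unique. Given the earlier results, I expect no genuine obstacle.
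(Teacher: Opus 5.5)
Your proof is correct and matches the paper's: sorting and membership give $1\ge 2p_2^\down+(k-2)\psi_k(p_2^\down)$, and the strict increase of the left side of \eqref{eq:cap-root}, followed by \eqref{eq:cap-increasing}, finishes it. The aside about the sign of $\psi_k$ is unnecessary, since $p_i^\down\ge p_k^\down\ge\psi_k(p_2^\down)$ holds directly by membership whatever that sign is.
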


\begin{proof}
Sorting and membership give
\[
  1\ge p_1^\down+p_2^\down+(k-2)p_k^\down
   \ge2p_2^\down+(k-2)\psi_k(p_2^\down).
\]
Use the strict increase in \eqref{eq:cap-root} and then
\eqref{eq:cap-increasing}.
\end{proof}

\subsection{The sharp family}\label{subsec:sharp}

The boundary family will appear repeatedly.  For $0\le u\le1$, let
\begin{equation}\label{eq:sharp-family}
 D_k(u):=\left(1-u,
 \frac{u+(k-2)u^k}{k-1},
 \underbrace{\frac{u-u^k}{k-1},\ldots,
 \frac{u-u^k}{k-1}}_{k-2\text{ entries}}\right).
\end{equation}
Its low atoms are $\psi_k(1-u)$, so after sorting it lies on the boundary of
$\body_k$ whenever its displayed middle coordinate is the largest atom: the
second atom is then $1-u$, and the smallest atom equals $\psi_k(1-u)$
exactly.

Define
\begin{equation}\label{eq:sharp-H}
 \mathcal H_k(t):=1-t-(k-2)\psi_k(t)
 =\psi_k(t)+(1-t)^k.
\end{equation}
This is the \emph{complementary atom} of the sharp profile: it completes
$t$ and $k-2$ copies of $\psi_k(t)$ to a law, and it is the largest atom of
that profile precisely when $\mathcal H_k(t)\ge t$.  In the coordinates of
\eqref{eq:sharp-family}, the middle entry of $D_k(u)$ is exactly
$\mathcal H_k(1-u)$.

\begin{lemma}[sharp-family inequality]\label{lem:sharp-family}
For $t\in[0,1]$,
\begin{equation}\label{eq:SF}
 \mathcal H_k(t)\le t
 \quad\Longrightarrow\quad
 \psi_k(\mathcal H_k(t))\le\psi_k(t).
\end{equation}
\end{lemma}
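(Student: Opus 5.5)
The plan is to substitute $u:=1-t$, which turns both sides of \eqref{eq:SF} into values of a single one-variable function. By \eqref{eq:psi-identities}, $\psi_k(1-x)=g(x)/(k-1)$ with $g(x):=x-x^k$, so $\psi_k(t)=g(u)/(k-1)$. By \eqref{eq:sharp-H}, $\mathcal H_k(t)=\bigl(u+(k-2)u^k\bigr)/(k-1)$, the middle entry of $D_k(u)$. Put $v:=1-\mathcal H_k(t)$. Then the hypothesis $\mathcal H_k(t)\le t$ reads $v\ge u$, and the conclusion reads $g(v)\le g(u)$. The function $g$ is concave on $[0,1]$ with $g(0)=g(1)=0$. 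It increases on $[0,1-t_k^*]$ and decreases on $[1-t_k^*,1]$, where $1-t_k^*=k^{-1/(k-1)}$ by \eqref{eq:tstar}.

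\emph{Easy case: $u\ge 1-t_k^*$, i.e.\ $t\le t_k^*$.} Here $u\le v\le1$ lie on the decreasing branch of $g$, so $g(v)\le g(u)$ immediately. Equivalently, $\mathcal H_k(t)\le t\le t_k^*$ lie on the increasing branch of $\psi_k$.

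\emph{Main case: $u<1-t_k^*$, i.e.\ $t$ is past the maximum of $\psi_k$.} Now $u$ is small while $v=1-\bigl(u+(k-2)u^k\bigr)/(k-1)$ is close to $1$, on the far side of the maximum, so monotonicity no longer suffices. Write $H:=\mathcal H_k(t)=1-v$. I would bound
\[
 g(v)=(1-H)\bigl(1-(1-H)^{k-1}\bigr)
\]
from above using an alternating (Bonferroni/Bernoulli-type) lower bound for $(1-H)^{k-1}$:
\[
 (1-H)^{k-1}\ \ge\ 1-nH+\tbinom n2H^2-\tbinom n3H^3 ,
\]
which holds on $0\le H\le1$. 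Substituting $nH=u+(k-2)u^k$ gives
\[
 g(v)\le(1-H)\bigl(nH-\tbinom n2H^2+\tbinom n3H^3\bigr).
\]
To leading order the right side is $u-\bigl(\tfrac1n+\tfrac{n-1}{2n}\bigr)u^2+O(u^3)$, up to the $u^k$ terms. The target $g(u)=u-u^k$ loses only $u^k$ with $k\ge4$, so the inequality should hold with room to spare for small $u$. Thus the main case reduces to a single polynomial inequality in $u$ on the interval $0\le u\le 1-t_k^*$.

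The main obstacle is making this polynomial comparison hold uniformly, both in $k\ge4$ and on the whole interval up to $u=1-t_k^*=k^{-1/(k-1)}$, which is not especially small for moderate $k$. I would first try to prove the cleaner sufficient inequality
\[
 (1-H)\bigl(nH-\tbinom n2H^2+\tbinom n3H^3\bigr)\le u-u^k
\]
by dividing by $u$ and using $H\le u\cdot\tfrac{1+(k-2)u^{k-1}}{k-1}$ together with $u^{k-1}\le 1/k$. This turns the $u^k$ correction into a bounded multiple of $u^2/n$, which is absorbed by the quadratic gain.

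If the cubic truncation is too lossy near the right endpoint, I have two fallbacks. One is to use concavity of $g$ directly: it suffices to check $g(v)\le g(u)$ at the endpoint $u=1-t_k^*$, where it follows from the easy case by continuity, and at small $u$, and then to show that $u\mapsto g(u)-g(v(u))$ changes sign at most once by a derivative comparison $g'(u)$ versus $g'(v)v'(u)$. The other is to verify $4\le k\le7$ numerically and treat $k\ge8$ asymptotically, mirroring the split used for \eqref{eq:cap-increasing}.
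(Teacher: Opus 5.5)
Your easy case ($u\ge 1-t_k^*$) is exactly the paper's first step. The main-case reduction, however, is to a false inequality.

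The cubic truncation is harmless only when $nH$ is small, and at the right end of your main case, $u\uparrow\alpha_0:=k^{-1/(k-1)}=1-t_k^*$, it is not. There $u^{n}=1/k$, so $H=\bigl(u+(n-1)u^{n+1}\bigr)/n=2u/k$ and $nH=2nu/k\to2$. The left side of your sufficient inequality then tends to $2-2+\tfrac43=\tfrac43$, whereas $u-u^k\to1$; the true value $g(v)$ tends only to $1-e^{-2}$.

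The failure is not merely asymptotic. At $k=8$, $u=8^{-1/7}\approx0.7430$, $H=u/4$, one gets $(1-H)\bigl(7H-21H^2+35H^3\bigr)\approx0.6514$, which exceeds $u-u^8=\tfrac78u\approx0.6501$. The excess grows with $k$ (about $0.015$ at $k=10$). By continuity the inequality fails for $u$ slightly below $\alpha_0$, inside your main case. Your small-$u$ bookkeeping (quadratic gain against the $u^k$ correction) does not see this region, where all truncated terms are of order one. At $k=4$ the truncation is exact, so there it simplifies nothing.

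Neither fallback closes the gap. The first has the right anchor: at $u=\alpha_0$ the inequality holds because $g$ is maximal there, which is exactly how the paper starts. What it lacks is a quantity that can be propagated from that anchor. The raw difference $g(u)-g(v(u))=W(H)-nu^{n+1}$, with $W(x)=(1-x)^{n+1}-1+(n+1)x$, vanishes at $u=0$ and is positive at $\alpha_0$. It therefore cannot be monotone in the needed direction. The claim that it \emph{changes sign at most once} is the whole difficulty: on the main case $g'(u)$ and $g'(v)v'(u)$ are both positive, and you give no mechanism for comparing them.

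The missing idea is to normalize by $u^{n+1}$. The target becomes $G(u):=W(H(u))/u^{n+1}\ge n$. Differentiation gives $G'(u)=-(n+1)u^{-n-2}\bigl(v^n(1+u\theta)-(1-u)\bigr)$ with $\theta=(n-1)u^n\le(n-1)/(n+1)$. The sign of the bracket is that of $L(u,\theta)=n\log\bigl(1-u(1+\theta)/n\bigr)+\log(1+u\theta)-\log(1-u)$, which is nonincreasing in $\theta$. Moreover, $L\bigl(u,(n-1)/(n+1)\bigr)$ vanishes at $u=0$ with nonnegative derivative. Hence $G$ is nonincreasing on $(0,\alpha_0]$, and $G(u)\ge G(\alpha_0)\ge n$.

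Your second fallback is also not a proof. A numerical check for $4\le k\le7$ would have to be certified over a continuum of $u$. And $k\ge8$ is precisely where your cubic bound breaks, with no replacement estimate offered.
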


\begin{proof}
Put $u=1-t$, $H=\mathcal H_k(t)$, $v=1-H$, and
$\chi(x)=x-x^{n+1}$.  The desired inequality is $\chi(v)\le\chi(u)$,
while $H\le t$ is $v\ge u$.  Let $\alpha_0=(n+1)^{-1/n}=1-t_k^*$.  If
$u\ge\alpha_0$, then $\chi$ is decreasing on $[u,1]$, and the result is
immediate.

Suppose $0<u\le\alpha_0$; the case $u=0$ is trivial.  Put
\[
 W(x)=(1-x)^{n+1}-1+(n+1)x,\qquad
 G(u)=\frac{W(H(u))}{u^{n+1}}.
\]
The target is equivalent to $G(u)\ge n$, and it holds at $u=\alpha_0$ by
maximality of $\chi(\alpha_0)$.  With
$\theta=(n-1)u^{n}$, direct differentiation yields
\begin{equation}\label{eq:SF-derivative}
 G'(u)=-(n+1)u^{-n-2}\bigl(v^n(1+u\theta)-(1-u)\bigr).
\end{equation}
It remains to prove the bracket nonnegative.  Note
$\theta\le\theta_0:=(n-1)/(n+1)$ and
$v=1-u(1+\theta)/n$.  Taking logarithms reduces the sign to
\[
 L(u,\theta):=n\log\left(1-\frac{u(1+\theta)}n\right)
 +\log(1+u\theta)-\log(1-u)\ge0.
\]
For fixed $u$, $\partial_\theta L\le0$, hence
$L(u,\theta)\ge L(u,\theta_0)$.  If
$L_0(u)=L(u,\theta_0)$, then $L_0(0)=0$ and
\[
 L_0'(u)=\frac{2n(n-1)u^2}
 {(1-u)(n+1-2u)(n+1+(n-1)u)}\ge0.
\]
Thus the bracket in \eqref{eq:SF-derivative} is nonnegative, $G$ is
nonincreasing, and $G(u)\ge G(\alpha_0)\ge n$.
\end{proof}

\section{Quasigroup and addition closure for all
\texorpdfstring{$k\ge4$}{k >= 4}}
\label{sec:addition}

Let $Q$ have order $k\ge4$ and carry a quasigroup operation $\star$.
The field-addition case is obtained by taking $Q$ to be the additive group.
If the atom vectors of two laws are sorted as
\[
 x_1\ge\cdots\ge x_k,\qquad y_1\ge\cdots\ge y_k,
\]
define the anti-aligned and runner-up pairings
\begin{equation}\label{eq:add-AB}
 A(x,y):=\sum_{i=1}^k x_i y_{k+1-i},\qquad
 B(x,y):=x_1y_2+x_2y_1+\sum_{i=3}^k x_i y_i.
\end{equation}

\begin{lemma}[quasigroup rearrangement]\label{lem:add-rearrangement}
If $p$ is their convolution with respect to $\star$, then
\[
 p_k^\down\ge A(x,y),\qquad p_2^\down\le B(x,y).
\]
\end{lemma}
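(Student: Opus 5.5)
The plan is to express every atom of the convolution as a fiber sum that pairs the atoms of the two laws through a permutation, and then bound that sum by rearrangement. Fix the labelled laws $\mu,\nu$ on $Q$ and an element $g\in Q$. Since right translation by every $h'$ is a permutation, for each $h$ there is exactly one $h'$ with $h\star h'=g$. Moreover the map $h\mapsto h'$ is a bijection $\pi_g$ of $Q$: if $h_1\star h'=h_2\star h'=g$, then injectivity of right translation by $h'$ forces $h_1=h_2$. Hence
\[
 p(g)=\sum_{h\in Q}\mu(h)\,\nu(\pi_g(h)),
\]
and each atom is a pairing of the atom multisets of $\mu$ and $\nu$ under some bijection. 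After relabelling, this is $\sum_i x_i y_{\sigma(i)}$ for a permutation $\sigma$ of $\{1,\dots,k\}$.

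For the floor, the rearrangement inequality bounds every such pairing below by the anti-aligned one, so $p(g)\ge A(x,y)$ for every $g$. Taking the minimum over $g$ gives $p_k^\down\ge A(x,y)$.

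For the second atom we use two facts.
\begin{enumerate}
\item The permutations $\pi_g$ for distinct $g$ are pointwise distinct: $\pi_g(h)\ne\pi_{g'}(h)$ for every $h$, because $h\star h'$ determines $g$. In sorted coordinates the $k$ permutations $\sigma_g$ therefore form a Latin square, and each pair $(i,j)$ is used by exactly one $g$.
\item Let $g_1\ne g_2$ realise the two largest atoms. At most one of them can use the aligned pairing $\sigma=\mathrm{id}$, since the $\sigma_g$ are pointwise distinct. So one of the two top atoms, say $p(g)$ with $\sigma_g\ne\mathrm{id}$, satisfies $p_2^\down\le p(g)$.
\end{enumerate}
It therefore suffices to prove that every non-identity permutation $\sigma$ has $\sum_i x_iy_{\sigma(i)}\le B(x,y)$.

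That last claim is the main step, and the plan for it is an exchange argument. Starting from $\sigma\ne\mathrm{id}$, repeatedly uncross inversions: if $i<j$ and $\sigma(i)>\sigma(j)$, swapping the two values changes the sum by $(x_i-x_j)(y_{\sigma(j)}-y_{\sigma(i)})\ge0$. The constraint is to stay inside the non-identity permutations, so we stop at a transposition instead of reaching $\mathrm{id}$. Concretely, we sort $\sigma$ by uncrossing until only a single adjacent transposition $(i\;i{+}1)$ remains, which is possible by a bubble-sort path in which every intermediate permutation is non-identity.

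A transposition $(i\;j)$ loses $(x_i-x_j)(y_i-y_j)$ relative to the aligned sum. It is therefore dominated by the adjacent one $(i\;i{+}1)$, since $(x_i-x_{i+1})(y_i-y_{i+1})\le(x_i-x_j)(y_i-y_j)$ by monotonicity. It remains to compare the adjacent transpositions with $(1\;2)$, and this is where I expect the obstacle: the adjacent losses $(x_i-x_{i+1})(y_i-y_{i+1})$ need not be minimal at $i=1$ for arbitrary sorted vectors.

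If that fails in general, I would try two fallbacks in order. First, exploit that the two top atoms together use two disjoint permutations, so that $p_1^\down+p_2^\down$ is bounded by the sum of the two best disjoint pairings. Second, reinterpret $B$ through the symmetry of the Latin-square structure: the pairing $x_1y_2+x_2y_1$ is forced for one of $g_1,g_2$, since the cell $(1,1)$ is used by only one $g$. I would then carry out the exchange argument only on the entries with $i,j\ge2$ after fixing where index $1$ goes. I expect this refinement, that the non-aligned top atom must send $1$ away from $1$, to be the route that actually yields $B$.
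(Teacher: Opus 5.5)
Your lower bound $p_k^\down\ge A(x,y)$ is fine and is the paper's argument: each atom is a pairing $\sum_i x_iy_{\sigma_g(i)}$, and rearrangement bounds it below. (Minor slip: the unique $h'$ with $h\star h'=g$ comes from bijectivity of the left translation $x\mapsto h\star x$. Injectivity of right translations is what makes $h\mapsto h'$ a permutation.)

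The gap is in the second bound. Your reduction, ``it suffices that every non-identity $\sigma$ satisfies $\sum_i x_iy_{\sigma(i)}\le B(x,y)$,'' is false, so the bubble-sort route cannot be completed. Take $x=y=(1,0,\dots,0)$ (point masses, which even lie in $\body_k$) and $\sigma=(2\;3)$: the pairing equals $1$ while $B(x,y)=0$. More generally, any $\sigma\ne\mathrm{id}$ with $\sigma(1)=1$ retains the product $x_1y_1$ and can exceed $B$. This is exactly the obstruction you anticipated with the adjacent transpositions $(i\;i{+}1)$, $i\ge2$. It is not a technicality; the main line of the proposal fails there.

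The correct dichotomy is on the cell $(1,1)$, not on the identity permutation. By your own Latin-square observation, at most one output $g$ has $\sigma_g(1)=1$. So what must be proved is that $\sigma(1)\ne1$ implies $\sum_i x_iy_{\sigma(i)}\le B(x,y)$. Your second fallback gestures at this but does not carry it out. Also, the phrase that $x_1y_2+x_2y_1$ is ``forced'' for one of $g_1,g_2$ is not right: only $\sigma_g(1)\ne1$ is forced, and the bound by $B$ must come from exchanges.

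The paper closes it with two exchanges.
\begin{enumerate}
\item If $\sigma(1)\ne2$, swap the partners of positions $1$ and $j=\sigma^{-1}(2)$. This changes the sum by $(x_1-x_j)(y_2-y_{\sigma(1)})\ge0$, since $\sigma(1)\ge3$. Call the result $\sigma'$ (take $\sigma'=\sigma$ if $\sigma(1)=2$), so $\sigma'(1)=2$.
\item If $\sigma'(2)\ne1$, swap the partners of positions $2$ and $i=(\sigma')^{-1}(1)$. Here $i\ge3$, and the sum gains $(x_2-x_i)(y_1-y_{\sigma'(2)})\ge0$.
\item Now positions $3,\dots,k$ are matched among $y_3,\dots,y_k$, where aligned pairing is maximal. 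The total is therefore at most $B(x,y)$.
\end{enumerate}
Hence every atom except possibly the one output using cell $(1,1)$ is at most $B$, which gives $p_2^\down\le B(x,y)$.
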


\begin{proof}
Enumerate $Q$ so that the first law has atoms $x_i$ and the second has atoms
$y_j$.  For a fixed output and each left input, the relevant right input is
the unique solution of the corresponding quasigroup equation.  As the left
input varies, these solutions form a permutation, so every convolution atom
is $\sum_i x_i y_{\pi(i)}$ for a permutation $\pi$.  The rearrangement
inequality \cite[Theorem~368]{hardy1952} gives the first bound.

For the second, suppose $\pi(1)\ne1$.  If $\pi(1)\ne2$, exchange the
partners of position $1$ and $\pi^{-1}(2)$; the scalar product increases by
$(x_1-x_{\pi^{-1}(2)})(y_2-y_{\pi(1)})\ge0$.  Then exchange the partner
of position $2$ with that of $\pi^{-1}(1)$ if necessary.  The remaining
coordinates are maximized by aligned pairing, so the product is at most
$B(x,y)$.  Only one output element can pair the carrier elements supporting
a selected $x_1$ and $y_1$.  Hence all but at most one convolution atom are
at most $B$, which proves the second-largest bound.
\end{proof}

The same cap that controls a member also controls the runner-up pairing.

\begin{lemma}[pairing cap]\label{lem:add-pairing-cap}
For $x,y\in\body_k$, one has $B(x,y)\le\bar t_k$.
\end{lemma}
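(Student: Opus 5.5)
The plan is to reduce the bilinear quantity $B(x,y)$ to a scalar inequality in a single second atom. The reduction uses only the ordering of the sorted atoms, the member cap (Lemma~\ref{lem:member-cap}) and the complementary atom $\mathcal H_k$ of \eqref{eq:sharp-H}.

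\textbf{Step 1 (reduction to the second atoms).} Assume, by the symmetry $B(x,y)=B(y,x)$, that $x_2\ge y_2$. For $i\ge3$ we have $x_i\le x_2$, so
\[
\sum_{i\ge3}x_iy_i\le x_2(1-y_1-y_2).
\]
Substituting into \eqref{eq:add-AB} gives
\[
B(x,y)\le x_1y_2+x_2(1-y_2)=(1-y_2)\,x_2+y_2\,x_1 .
\]
This is a convex combination of $x_2$ and $x_1$.

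\textbf{Step 2 (bounding $x_1$).} Next I control $x_1$ through membership of $x$. Since $x_1=1-x_2-\sum_{i\ge3}x_i$, $x_i\ge x_k$ for $i\ge3$, and $x_k\ge\psi_k(x_2)$, we get
\[
x_1\le 1-x_2-(k-2)\psi_k(x_2)=\mathcal H_k(x_2).
\]
Write $T=\bar t_k$. The cap equation \eqref{eq:cap-root} says exactly $\mathcal H_k(T)=T$, and Lemma~\ref{lem:member-cap} gives $x_2\le T$. Hence
\[
B(x,y)\le x_2+y_2\bigl(\mathcal H_k(x_2)-x_2\bigr).
\]
If $\mathcal H_k(x_2)\le T$, the right side is a convex combination of two numbers at most $T$, and we are done. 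Otherwise $\mathcal H_k(x_2)>x_2$, so the bracket is positive. Using $y_2\le x_2$, it suffices to prove the scalar inequality
\[
f(t):=T-t-t\bigl(\mathcal H_k(t)-t\bigr)\ge0\qquad\text{for }t\in[0,T].
\]

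\textbf{Step 3 (the scalar inequality; main obstacle).} Here $f(0)=T>0$ and $f(T)=0$ because $\mathcal H_k(T)=T$. Expanding gives
\[
f(t)=T-2t+2t^2+(k-2)t\psi_k(t)-t+t^2 \quad\text{up to regrouping},
\]
so $f$ is explicit in $t$ and $\psi_k(t)$. I would first try to show that $f$ is nonincreasing on $[0,T]$, which together with $f(T)=0$ finishes the proof. Differentiating, the derivative involves $\psi_k$ and $t\psi_k'$. On $[0,T]\subseteq[0,t_k^*]$, \eqref{eq:cap-increasing} makes $\psi_k$ nondecreasing, and \eqref{eq:star-shaped} bounds $t\psi_k'(t)$ by $\psi_k(t)$. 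Together these should reduce $f'\le0$ to an inequality of the form $2\mathcal H_k(t)-2t+\dots\ge0$ that can be checked at the endpoints.

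If monotonicity fails near $t=T$, the fallback is a finer version of Step 1 that uses $x_i\le x_3$ for $i\ge3$, which gives a slightly better bound than $x_2$ in Step 1. Failing that, I would verify $f\ge0$ directly for small $k$ and by the bound $T\approx$ its asymptotic value for large $k$, in the same spirit as the verification of \eqref{eq:cap-increasing}. I expect this scalar step to carry all the difficulty, since Steps 1 and 2 are pure rearrangement bookkeeping.
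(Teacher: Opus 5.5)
Your Steps 1 and 2 are correct. They are the paper's rearrangement bound with the roles of $x$ and $y$ interchanged, followed by the same estimate $x_1\le\mathcal H_k(x_2)$. Step 3, however, is only a plan, and its displayed expansion is wrong. The correct form is
\[
 f(t)=T-2t+2t^2+(k-2)t\,\psi_k(t).
\]
Your extra terms $-t+t^2$ would give $f(T)=-T(1-T)<0$, which contradicts your own $f(T)=0$.

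With the correct expansion, the monotonicity idea you sketch does finish the proof, and no fallback is needed. By \eqref{eq:star-shaped},
\[
 f'(t)=-2+4t+(k-2)\bigl(\psi_k(t)+t\psi_k'(t)\bigr)
 \le-2\bigl(1-2t-(k-2)\psi_k(t)\bigr)=-2\bigl(\mathcal H_k(t)-t\bigr)\le0
\]
on $[0,T]$. The last sign holds because the left side of \eqref{eq:cap-root} is increasing and equals $1$ at $T$. Hence $f\ge f(T)=0$. Two minor points: monotonicity of $\psi_k$ is not used, and your case $\mathcal H_k(x_2)\le T$ occurs only at $x_2=T$, since $\mathcal H_k$ is decreasing with $\mathcal H_k(T)=T$.

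The paper avoids this computation with a cruder substitution. In your labelling, it takes $B\le x_2+y_2(x_1-x_2)$ and replaces the multiplier $y_2$ by the cap $\bar t_k$, using $x_1\ge x_2$ and the member cap for $y$, rather than by the other second atom. This gives $B\le(1-\bar t_k)x_2+\bar t_k\mathcal H_k(x_2)$. That function is convex in $x_2$ because $\psi_k$ is concave. It equals $\bar t_k$ at $x_2=0$ (as $\mathcal H_k(0)=1$) and at $x_2=\bar t_k$, so it is at most $\bar t_k$ on $[0,\bar t_k]$. The paper therefore needs no symmetry reduction, no case split, no derivative, and no appeal to \eqref{eq:star-shaped}. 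Your substitution $y_2\le x_2$ keeps a marginally sharper intermediate bound, but it is exactly what creates the ``main obstacle'' you anticipated; the lemma has no real scalar difficulty.
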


\begin{proof}
For a member $y$, sorting and membership give
$y_1\le\mathcal H_k(y_2)$.  Using $y_i\le y_2$ for $i\ge3$ and
$x_2\le\bar t_k$,
\[
 B\le y_2+x_2(y_1-y_2)
 \le y_2+\bar t_k\bigl(\mathcal H_k(y_2)-y_2\bigr).
\]
The right side is convex in $y_2$ on $[0,\bar t_k]$ and equals
$\bar t_k$ at both endpoints; therefore it is at most $\bar t_k$
throughout.
\end{proof}

We now prove the scalar comparison $A\ge\psi_k(B)$.  For a sorted member
$w$, define the coefficients that arise when $\psi_k$ is replaced by its
tangent line at $w_2$ (Lemma~\ref{lem:add-tangent} below): $T_w$ will
multiply the partner's second atom, the coefficients $C_i(w)$ its lower
atoms, and $\Sigma C(w)$ is their total:
\begin{align}
 T_w&=(w_{k-1}-w_k)
      -\psi_k'(w_2)(w_1-w_2),\label{eq:add-K}\\
 C_i(w)&=(w_{k+1-i}-w_k)
      +\psi_k'(w_2)(w_2-w_i),
      \qquad3\le i\le k,\label{eq:add-C}\\
 \Sigma C(w)&=\sum_{i=3}^kC_i(w)=1-w_{k-1}-(k-1)w_k
 +\psi_k'(w_2)
   \bigl((k-1)w_2+w_1-1\bigr).\label{eq:add-sumC}
\end{align}
Every $C_i(w)$ is nonnegative.

\begin{lemma}[tangent bounds]\label{lem:add-tangent}
For members $x,y$,
\begin{align*}
 A(x,y)-\psi_k(B(x,y))&\ge
 U_1:=x_2T_y+\sum_{i=3}^k x_iC_i(y),\\
 A(x,y)-\psi_k(B(x,y))&\ge
 U_2:=y_2T_x+\sum_{i=3}^k y_iC_i(x).
\end{align*}
\end{lemma}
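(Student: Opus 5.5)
Because the tangent-line bound for a concave function holds no matter what sign the slope has, I expect both inequalities of Lemma~\ref{lem:add-tangent} to come out as exact identities followed by a single use of concavity, with no case split. It suffices to prove the bound by $U_1$. Both pairings in \eqref{eq:add-AB} are symmetric in their arguments: reindexing $i\mapsto k+1-i$ shows $A(x,y)=A(y,x)$, and $B$ is visibly symmetric. So the bound by $U_2$ follows by exchanging the roles of $x$ and $y$.

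\emph{Step 1 (concavity).} The function $\psi_k$ is concave on $[0,1]$ by \eqref{eq:psi-derivatives}, and $B=B(x,y)\in[0,1]$. Hence $\psi_k$ lies below its tangent line at $y_2$:
\[
 \psi_k(B)\le\psi_k(y_2)+\psi_k'(y_2)(B-y_2).
\]
Membership of $y$ in $\body_k$ gives $\psi_k(y_2)\le y_k$. Combining the two,
\[
 A-\psi_k(B)\ \ge\ (A-y_k)-\psi_k'(y_2)(B-y_2).
\]
No monotonicity of $\psi_k$ is used here, so the member cap is not needed.

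\emph{Step 2 (exact expansion).} I will use $\sum_i x_i=1$ to write $y_k=\sum_i x_iy_k$ and $y_2=\sum_i x_iy_2$, and then expand both differences in the weights $x_i$. The $x_1$-terms cancel in each case, because $x_1$ is paired with $y_k$ in $A$ and with $y_2$ in $B$. This gives
\begin{align*}
 A-y_k&=x_2(y_{k-1}-y_k)+\sum_{i=3}^k x_i(y_{k+1-i}-y_k),\\
 B-y_2&=x_2(y_1-y_2)+\sum_{i=3}^k x_i(y_i-y_2).
\end{align*}
Substituting these into the right side of Step~1 and collecting the coefficients of $x_2$ and of each $x_i$ with $i\ge3$, the coefficient of $x_2$ is $T_y$ from \eqref{eq:add-K} and the coefficient of $x_i$ is $C_i(y)$ from \eqref{eq:add-C}. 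The right side is therefore exactly $U_1$.

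The only delicate point is bookkeeping. The $x_1$ contributions must be absorbed by the two normalizations $y_k=\sum_i x_iy_k$ and $y_2=\sum_i x_iy_2$; this is precisely why the tangent point is $y_2$ and the anchor is $y_k$, rather than any other choice. The nonnegativity of the coefficients $C_i$ is not needed for this lemma; it will matter only later, when $U_1$ and $U_2$ are bounded from below.
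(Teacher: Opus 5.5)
Your proof is correct and is the paper's argument. It uses the tangent line of the concave $\psi_k$ at $y_2$, the membership bound $\psi_k(y_2)\le y_k$, and the expansion of $A-y_k-\psi_k'(y_2)(B-y_2)$ in the coordinates of $x$ (where the $x_1$ coefficient vanishes), then the interchange of $x$ and $y$ for $U_2$; your explicit expansions and the symmetry check $A(x,y)=A(y,x)$ just spell out steps the paper leaves implicit.
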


\begin{proof}
Concavity gives
$\psi_k(B)\le\psi_k(y_2)+\psi_k'(y_2)(B-y_2)
\le y_k+\psi_k'(y_2)(B-y_2)$.
Expanding $A-y_k-\psi_k'(y_2)(B-y_2)$ by coordinates of $x$ gives
exactly $U_1$; the coefficient of $x_1$ vanishes.
Interchanging $x,y$ gives $U_2$.
\end{proof}

\begin{lemma}[single-member inequality]\label{lem:add-single}
Every sorted member $w$ satisfies
\begin{equation}\label{eq:add-single}
 F(w):=w_2T_w+\psi_k(w_2)\Sigma C(w)\ge0.
\end{equation}
\end{lemma}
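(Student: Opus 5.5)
The plan is to fix the second atom $t:=w_2$ and treat $F(w)$ as an affine function of the remaining sorted coordinates, then push it to extreme configurations and finish with one-variable inequalities in $t$. By Lemma~\ref{lem:member-cap}, $t\le\bar t_k\le t_k^*$, so $g:=\psi_k'(t)\ge0$. Expanding \eqref{eq:add-K} and \eqref{eq:add-sumC}:
\begin{itemize}
\item the coefficient of $w_1$ in $F$ is $-g\,\delta_k(t)\le0$;
\item the coefficient of $w_{k-1}$ is $\delta_k(t)\ge0$;
\item the coefficient of $w_k$ is $-t-(k-1)\psi_k(t)$;
\item the middle atoms $w_3,\dots,w_{k-2}$ do not appear, except through the constraint $\sum_i w_i=1$.
\end{itemize}
The feasible set for fixed $t$ is the polytope cut out by $w_1\ge t\ge w_3\ge\cdots\ge w_{k-1}\ge w_k\ge\psi_k(t)$ and $\sum_i w_i=1$. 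An affine function attains its minimum there at a vertex, so it suffices to check $F\ge0$ at the vertices.

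Next I would identify the relevant vertices. Because $F$ is increasing in $w_{k-1}$, I want $w_{k-1}$ small, which forces $w_{k-1}=w_k=:m$. Because $F$ is decreasing in $w_1$, I want $w_1$ large, which forces the middle atoms down to $m$ as well. This leaves the one-parameter family
\[
w_1=1-t-(k-2)m,\qquad w_3=\cdots=w_k=m .
\]
Other vertices have some middle atoms equal to $t$. For those I would argue that raising a middle atom from $m$ to $t$ only lowers $w_1$, and so only increases $F$ by $g\,\delta_k(t)$ per unit. The minimum is therefore on this family.

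Along the family $F$ is affine in $m$, with slope $-k\psi_k(t)+(k-2)g\,\delta_k(t)$, whose sign is not fixed. So I check both endpoints of the range of $m$:
\begin{itemize}
\item the sharp endpoint $m=\psi_k(t)$, where $w_1=\mathcal H_k(t)$;
\item the flat endpoint $m=(1-2t)/(k-2)$, where $w_1=t$.
\end{itemize}
If $\mathcal H_k(t)<t$, the sharp endpoint is infeasible, and the constraint $w_1\ge t$ alone cuts out the range. At the flat endpoint $T_w=0$ and $\Sigma C(w)=1-km+g\bigl(kt-1\bigr)$, so $F=\psi_k(t)\bigl(1-k\,\tfrac{1-2t}{k-2}+g(kt-1)\bigr)$. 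This should reduce, via membership $m\ge\psi_k(t)$, to a sign check that I expect to be easy.

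At the sharp endpoint, $T_w=-g(\mathcal H_k(t)-t)$ and $\Sigma C=1-k\psi_k+(k-2)g\,\delta_k$, giving the scalar target
\[
\Phi(t):=-t\,g\,(\mathcal H_k(t)-t)+\psi_k(t)\bigl(1-k\psi_k(t)\bigr)+(k-2)\,g\,\psi_k(t)\,\delta_k(t)\ge0
\]
on $[0,\bar t_k]$.

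This scalar inequality is the main obstacle. I would substitute $u=1-t$ and use \eqref{eq:psi-identities}: $\psi_k=(u-u^k)/n$, $g=((n+1)u^n-1)/n$ and $\mathcal H_k(t)-t=\psi_k+u^k-t$. This turns $\Phi$ into a polynomial in $u$ on $[1-\bar t_k,1]$. Near $u=1$ it vanishes to high order, so I would divide by the appropriate power of $(1-u)$. I would then try the approach of Lemma~\ref{lem:sharp-family}: take logarithms or ratios against $u^n$, and show monotonicity of the normalized quantity on the range. Here \eqref{eq:star-shaped}, i.e.\ $t g\le\psi_k$, should control the one negative term $-tg(\mathcal H_k-t)$ by $\psi_k(\mathcal H_k-t)$. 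With that bound, positivity reduces to $1-k\psi_k-(\mathcal H_k-t)+(k-2)g\,\delta_k/\ldots\ge0$, which I expect to close using $\mathcal H_k=1-t-(k-2)\psi_k$ and the cap \eqref{eq:cap-root}.
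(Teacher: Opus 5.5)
Your reduction to the segment $(1-t-(k-2)m,\,t,\,m,\dots,m)$ by transferring mass to $w_1$ is the same as the paper's first two moves, and so is the observation that $F$ is affine in $m$ there. The gap is in the endpoints. Sortedness imposes $m=w_k\le w_2=t$ as well as $w_1\ge t$. So the range is $\psi_k(t)\le m\le\min\bigl(t,(1-2t)/(k-2)\bigr)$, and for $kt<1$ the upper end is $m=t$, not your flat endpoint. At your flat endpoint the value simplifies to
\[
 F=\psi_k(t)\,(kt-1)\Bigl(\frac{2}{k-2}+\psi_k'(t)\Bigr),
\]
which is strictly negative for $0<t<1/k$. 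The sign check you expected to be easy is therefore false there, and interpolating from that point proves nothing; membership $m\ge\psi_k(t)$ does not change the sign of $kt-1$.

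You must split into two cases. For $kt\ge1$ the displayed value is nonnegative. For $kt\le1$ evaluate at $m=t$, i.e.\ at $(1-(k-1)t,t,\dots,t)$, where $F=(1-kt)\bigl(\psi_k(t)-t\psi_k'(t)\bigr)\ge0$ by \eqref{eq:star-shaped}. Your aside about $\mathcal H_k(t)<t$ is vacuous: for a member, $t\le\bar t_k$ forces $\mathcal H_k(t)\ge t$, and otherwise no admissible $m$ exists.

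At the sharp endpoint your star-shapedness idea is right and closes immediately. The substitution $u=1-t$ and the logarithmic monotonicity are unnecessary. Put $\psi=\psi_k(t)$ and $g=\psi_k'(t)$. Since $1-k\psi-(\mathcal H_k(t)-t)=2\delta_k(t)$, one has the exact identity
\[
 \Phi(t)=(\psi-tg)\bigl(\mathcal H_k(t)-t\bigr)+\psi\,\delta_k(t)\bigl(2+(k-2)g\bigr).
\]
The first term is nonnegative by \eqref{eq:star-shaped} and $\mathcal H_k(t)\ge t$; the second is nonnegative because $g\ge0$ on the cap.

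Your $\Phi$ is exactly the margin \eqref{eq:L1ss} of Lemma~\ref{lem:L1ss}. The paper proves that lemma separately by a polynomial-band argument: a cubic in $\omega=(1-t)^{k-1}$ controlled through five exact identities and case-by-case sign inspection. Once the $m=t$ endpoint is restored, your route proves the single-member inequality with that whole lemma replaced by a two-line identity.
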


\begin{proof}
Fix $t=w_2$, put $\psi=\psi_k(t)$ and $d=\psi_k'(t)$, and make the
dependence on the remaining displayed coordinates explicit:
\[
 \Phi_t(u,v,z):=
 t\bigl[(v-z)-d(u-t)\bigr]
 +\psi\bigl[1-v-(k-1)z+d((k-1)t+u-1)\bigr].
\]
Thus $F(w)=\Phi_t(w_1,w_{k-1},w_k)$.  First hold
$t,v=w_{k-1},z=w_k$ fixed.  Lower each of
$w_3,\ldots,w_{k-2}$ to $v$ and transfer the removed mass to
$u=w_1$.  The middle coordinates do not otherwise occur in $\Phi_t$, and
\[
 \frac{\partial\Phi_t}{\partial u}=d(\psi-t)\le0,
\]
so this mass transfer can only decrease $F$.

Let $\one=(1,\ldots,1)\in\R^{k-3}$.  After this first move the vector has
the form
\[
 (u,t,v\one,z),
 \qquad u=1-t-(k-3)v-z.
\]
Now hold $t$ and $z$ fixed, lower every component of the block $v\one$
together, and again transfer the removed mass to $u$.  In other words,
restrict $\Phi_t$ to the mass-preserving one-variable path
\[
 \widetilde\Phi_{t,z}(v)
 :=\Phi_t\bigl(1-t-(k-3)v-z,v,z\bigr).
\]
Its derivative is
\[
 \frac{\mathrm d}{\mathrm dv}\widetilde\Phi_{t,z}(v)
 =\frac{\partial\Phi_t}{\partial v}
  -(k-3)\frac{\partial\Phi_t}{\partial u}
 =(t-\psi)\bigl(1+(k-3)d\bigr)\ge0.
\]
Hence lowering $v$ to $z$ also decreases $F$.  Both moves preserve total
mass, sorting, $w_2=t$, and $w_k=z$, so membership is preserved.  At the
resulting vector $(1-t-(k-2)z,t,z\one,z)$, the expression is linear in
$z$:
\begin{equation}\label{eq:add-z-segment}
 F=-\psi_k'(t)t\bigl(1-2t-(k-2)z\bigr)
 +\psi\bigl[1-kz+(k-2)\psi_k'(t)(t-z)\bigr].
\end{equation}
Membership and sorting place
\[
 z\in\left[\psi_k(t),\min\left(t,\frac{1-2t}{k-2}\right)\right].
\]
At the two possible upper endpoints, \eqref{eq:add-z-segment} becomes
\[
 (1-kt)\bigl(\psi-t\psi_k'(t)\bigr)\ge0\quad(kt\le1),
 \qquad
 -\psi(1-kt)\left(\frac2{k-2}+\psi_k'(t)\right)\ge0
 \quad(kt\ge1).
\]
At the lower endpoint $z=\psi_k(t)$ it becomes the one-variable margin in
Lemma~\ref{lem:L1ss} below.  Linearity closes the entire segment.
\end{proof}

\begin{lemma}[crossing]\label{lem:add-crossing}
For all members $x,y$, $\max(U_1,U_2)\ge0$.  Consequently
$A(x,y)\ge\psi_k(B(x,y))$.
\end{lemma}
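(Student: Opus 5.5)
The consequence is immediate once $\max(U_1,U_2)\ge0$ is established, since Lemma~\ref{lem:add-tangent} bounds $A-\psi_k(B)$ below by both $U_1$ and $U_2$. So the whole task is the crossing statement. I would prove it by contradiction, assuming $U_1<0$ and $U_2<0$ and combining the two inequalities with suitable nonnegative weights to contradict Lemma~\ref{lem:add-single}.

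First I record the sign structure. The $C_i(w)$ are nonnegative, so in $U_1$ only the term $x_2T_y$ can be negative. Hence $U_1<0$ forces $T_y<0$, and likewise $U_2<0$ forces $T_x<0$.

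Next I bound the positive part of $U_1$ from below. For $i\ge3$, membership and sorting give $x_i\ge x_k\ge\psi_k(x_2)$, so
\[
U_1\ \ge\ x_2T_y+\psi_k(x_2)\,\Sigma C(y),
\]
and symmetrically
\[
U_2\ \ge\ y_2T_x+\psi_k(y_2)\,\Sigma C(x).
\]
Lemma~\ref{lem:add-single} applied to $y$ gives $y_2T_y+\psi_k(y_2)\Sigma C(y)\ge0$. Since $T_y<0$, this yields
\[
\Sigma C(y)\ \ge\ \frac{y_2\,(-T_y)}{\psi_k(y_2)}.
\]
Note $\psi_k(y_2)>0$ whenever $T_y<0$; if $\psi_k(y_2)=0$ then $y_2=0$ and $T_y=0$. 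Substituting,
\[
U_1\ \ge\ (-T_y)\Bigl(\frac{\psi_k(x_2)}{x_2}\cdot\frac{y_2}{\psi_k(y_2)}-1\Bigr)x_2,
\]
and symmetrically for $U_2$ with the roles exchanged. Put $\varphi(t)=\psi_k(t)/t$. The two brackets are then $\varphi(x_2)/\varphi(y_2)-1$ and $\varphi(y_2)/\varphi(x_2)-1$. Their product structure means at least one is $\ge0$.

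By concavity and $\psi_k(0)=0$, $\varphi$ is nonincreasing, which is just \eqref{eq:star-shaped}. So whichever of $x_2,y_2$ is smaller, the factor attached to the other member is nonnegative. Concretely, if $x_2\le y_2$ then $\varphi(x_2)\ge\varphi(y_2)$ and $U_1\ge0$; otherwise $U_2\ge0$. This contradicts the assumption, and in fact yields $\max(U_1,U_2)\ge0$ directly, without needing contradiction.

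The main obstacle I anticipate is the degenerate and boundary bookkeeping:
\begin{itemize}
\item the cases $x_2=0$ or $y_2=0$, where a point-mass-like member has $T=0$ and $U$ reduces to a nonnegative sum;
\item checking that the step $x_i\ge\psi_k(x_2)$ is applied only to the nonnegative coefficients $C_i(y)$, which is exactly why $\Sigma C(y)\ge0$ (a sum of nonnegative $C_i$) matters;
\item confirming that the ratio manipulation is valid when $T_y\ge0$, in which case $U_1\ge0$ trivially.
\end{itemize}
If the clean division argument fails because Lemma~\ref{lem:add-single} is too weak in some regime, the fallback would be to use the cap Lemma~\ref{lem:member-cap} to keep $x_2,y_2$ on the increasing branch, where $\psi_k$ is monotone.
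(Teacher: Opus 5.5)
Your argument is essentially the paper's. Both bound $\sum_{i\ge3}x_iC_i(y)$ below by $\psi_k(x_2)\,\Sigma C(y)$, apply Lemma~\ref{lem:add-single} to $y$ (and symmetrically to $x$), and compare $x_2/\psi_k(x_2)$ with $y_2/\psi_k(y_2)$. The paper argues by contradiction: $U_1,U_2<0$ gives both $x_2/\psi_k(x_2)>y_2/\psi_k(y_2)$ and the reverse strict inequality. It therefore never needs monotonicity of $\varphi$, which you also observe, since $r-1$ and $1/r-1$ cannot both be negative. Your direct version gives slightly more: it tells you that $U_1\ge0$ whenever $x_2\le y_2$.

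One side remark is false and should be corrected. A point mass $y=(1,0,\dots,0)$ has $y_2=0$ but
$T_y=-\psi_k'(0)\,y_1=-1$, not $0$. So $T_y<0$ does not imply $\psi_k(y_2)>0$, and your division by $\psi_k(y_2)$ is unjustified as written. The conclusion survives: in your case $x_2\le y_2$, the value $y_2=0$ forces $x_2=0$, and then $U_1=0$ directly. The paper's contradiction setup avoids the issue entirely, because $U_1<0$ and $U_2<0$ already force $x_2,y_2>0$.
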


\begin{proof}
Suppose $U_1,U_2<0$.  Then $x_2,y_2>0$ and $T_x,T_y<0$.  Since every
atom of $x$ is at least $\psi_k(x_2)$,
\[
 \frac{x_2}{\psi_k(x_2)}>
 \frac{\Sigma C(y)}{-T_y}\ge
 \frac{y_2}{\psi_k(y_2)},
\]
where the second inequality is Lemma~\ref{lem:add-single} for $y$.  The
symmetric argument gives the strict reverse inequality, a contradiction.
The conclusion follows from Lemma~\ref{lem:add-tangent}.
\end{proof}

It remains to establish the one-variable endpoint used above.  The proof is
included because it is the only scalar input to the addition gate.  The
lemma is stated for real $k$ because integrality plays no role in it; we
apply it only for integer orders.

\begin{lemma}[polynomial-band inequality]\label{lem:L1ss}
Let $k\ge4$ be real and let $t\in[0,1]$ satisfy
$2t+(k-2)\psi_k(t)\le1$.  With $\psi=\psi_k(t)$,
\begin{equation}\label{eq:L1ss}
 \psi_k'(t)\bigl[t(2t-1)+(k-2)\psi(2t-\psi)\bigr]
 +\psi(1-k\psi)\ge0.
\end{equation}
\end{lemma}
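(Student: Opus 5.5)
The plan is to avoid any one-variable calculus: rewrite the left side of \eqref{eq:L1ss} around the slack in the hypothesis, so that it becomes a sum of two manifestly nonnegative terms. Put $\psi=\psi_k(t)$, $d=\psi_k'(t)$, and introduce the slack
\[
 s:=1-2t-(k-2)\psi\ \ge0 .
\]
Then the bracket multiplying $d$ regroups as
\[
 t(2t-1)+(k-2)\psi(2t-\psi)=t\bigl(2t+(k-2)\psi-1\bigr)+(k-2)\psi(t-\psi)=-ts+(k-2)\psi(t-\psi).
\]
Likewise
\[
 1-k\psi=s+2(t-\psi).
\]
Substituting and collecting the $s$-terms and the $(t-\psi)$-terms, the left side of \eqref{eq:L1ss} should equal
\[
 s\,\bigl(\psi-t\,d\bigr)+(t-\psi)\,\psi\,\bigl(2+(k-2)d\bigr).
\]
The first step is to verify this identity, which is a two-line expansion.

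Each factor is then nonnegative for real $k\ge4$ and $t\in[0,1]$:
\begin{itemize}
\item $s\ge0$ is exactly the hypothesis.
\item $\psi-td\ge0$ is the star-shaped inequality \eqref{eq:star-shaped}. That inequality comes only from concavity of $\psi_k$ ($\psi_k''=-k(1-t)^{k-2}\le0$) and $\psi_k(0)=0$, so it holds for real $k$.
\item $t-\psi=\delta_k(t)\ge0$ and $\psi\ge0$ are the facts recorded after \eqref{eq:psi-identities}; in the form $\psi_k(1-u)=(u-u^k)/(k-1)$ they are immediate for real $k\ge1$.
\item For the last factor, \eqref{eq:psi-derivatives} gives $d\ge-1/(k-1)$. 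Hence $2+(k-2)d\ge 2-(k-2)/(k-1)=k/(k-1)>0$.
\end{itemize}
With all factors nonnegative, the sum is nonnegative, which is \eqref{eq:L1ss}.

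The only real obstacle is spotting the regrouping by the slack $s$. A naive approach would instead try to use the cap \eqref{eq:cap-increasing} to get $d\ge0$, but that bound was proved only for integer $k$. The decomposition above needs neither the cap nor integrality, only the crude bound $d\ge-1/(k-1)$. If the identity turned out to be off by a term, my fallback would be to restrict to $t\le\bar t_k$ (which is what the hypothesis means), use $d\ge0$ there, and then bound the bracket from below by $-ts$ directly. This still needs \eqref{eq:star-shaped} to absorb the term $-tsd$ into $\psi s$.
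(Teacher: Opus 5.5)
Your proof is correct, and it takes a genuinely different and much shorter route than the paper's. The identity checks out: with $s=1-2t-(k-2)\psi$ and $d=\psi_k'(t)$, the left side of \eqref{eq:L1ss} equals $s(\psi-td)+(t-\psi)\psi\bigl(2+(k-2)d\bigr)$. Each factor is nonnegative for real $k\ge4$ by concavity with $\psi_k(0)=0$, Bernoulli's inequality, and $\psi_k'\ge-1/(k-1)$, so your fallback is never needed.

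The paper instead substitutes $\omega=(1-t)^{k-1}$ and clears denominators to get a polynomial $\mathcal P(t,\omega)$ that is cubic in $\omega$. It then relaxes $\omega$ to a band $\max(\omega_L,\omega_B)\le\omega\le\omega_U$, using Bernoulli and the cap hypothesis. Next it splits into bulk and corner regions by the sign of $\gamma=(k-2)t^2-kt+1$. Finally it certifies concavity in $\omega$ and nonnegativity at the band edges through five exact polynomial identities. That certificate forgets the structure of $\psi_k$. Your regrouping keeps $\psi$ and $d$ as functions, uses the hypothesis only through $s\ge0$, and needs nothing beyond the facts of Section~\ref{subsec:scalar}.

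Your decomposition also simplifies the surrounding argument. For every $z$, the linear expression \eqref{eq:add-z-segment} factors as $(1-2t-(k-2)z)(\psi-td)+(t-z)\psi\bigl(2+(k-2)d\bigr)$. Your identity is the case $z=\psi$, and the two upper-endpoint signs checked in the proof of Lemma~\ref{lem:add-single} are the cases $z=t$ and $z=(1-2t)/(k-2)$. Both factors are nonnegative on the whole segment $z\le\min\bigl(t,(1-2t)/(k-2)\bigr)$, so that endpoint analysis collapses too. The paper's route buys only a uniform identity-plus-sign-check format, at considerably more cost for the same conclusion.
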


\begin{proof}
Write $s=1-t$ and $\omega=s^{k-1}$.  After multiplying the left
side of \eqref{eq:L1ss} by $(k-1)^3$, direct expansion gives
\begin{align}
 \mathcal P(t,\omega)={}&(k-1)^2(k\omega-1)t(2t-1)\notag\\
 &+(k-2)(k\omega-1)s(1-\omega)\bigl(2(k-1)t-s(1-\omega)\bigr)\notag\\
 &+(k-1)s(1-\omega)\bigl((k-1)-ks(1-\omega)\bigr).
 \label{eq:L1-P}
\end{align}
Bernoulli's inequality and the cap hypothesis place $\omega$ in the
polynomial band
\begin{equation}\label{eq:L1-band}
 \max(\omega_L,\omega_B)\le \omega\le \omega_U,
\end{equation}
where
\[
 \omega_L=1-(k-1)t,\quad
 \omega_U=\frac{s}{s+(k-1)t},\quad
 \omega_B=1-\frac{(k-1)(2s-1)}{(k-2)s}
\]
(the subscripts stand for the lower and upper edges and for the edge
active in the bulk region below).  The band forces $0\le t\le1/2$.  Put
$\gamma=(k-2)t^2-kt+1$.  Since
\[
 \omega_L-\omega_B=\frac{(k-1)\gamma}{(k-2)s},
\]
the lower edge is $\omega_L$ when $\gamma\ge0$ and $\omega_B$ when
$\gamma\le0$.

The polynomial $\mathcal P$ is cubic in $\omega$ with negative leading
coefficient, and $\mathcal P_{\omega\omega}$ decreases in $\omega$.  Thus
it is enough to establish concavity at the lower edge and nonnegativity
at the two edges.  The needed signs follow from the following five exact
identities:
\begin{align}
 \mathcal P(t,\omega_B)((k-2)s)^3
 &=-k(k-2)(k-1)^3t(t-1)^2(2t-1)(kt-1),\label{eq:L1-D}\\
 -\mathcal P_{\omega\omega}(t,\omega_B)(k-2)s
 &=2(k-2)(k-1)(t-1)^2(2k^2t-k+2t-2),\label{eq:L1-concD}\\
 \mathcal P(t,\omega_U)(s+(k-1)t)^3
 &=-t^2(k-1)^3G_U,\label{eq:L1-U}\\
 \mathcal P(t,\omega_L)&=t^2(k-1)^3G_L,\label{eq:L1-L}\\
 -\mathcal P_{\omega\omega}(t,\omega_L)&=2(k-1)(1-t)G_C,\label{eq:L1-concL}
\end{align}
where
\begin{align*}
 -G_U={}&kt\bigl(k(1-t-t^2)-2t(1-2t)\bigr)
 +(1-2t)((1-t)^2+t^2),\\
 (1-t)G_L={}&\gamma\bigl(k(1-t^2)-1\bigr)+t(1-2t^2),\\
 G_C={}&3k\gamma+2(k^2+1)t-(k+2).
\end{align*}
On the bulk region $\gamma\le0$, one has $kt\ge1$, so
\eqref{eq:L1-D} and \eqref{eq:L1-concD} have the required signs.  The
upper identity is nonnegative throughout $[0,1/2]$, since
$1-t-t^2\ge1/4$ and $2t(1-2t)\le1/4$.

On the corner region $\gamma\ge0$, \eqref{eq:L1-L} is nonnegative by
direct sign inspection.  For $G_C$, put
$t_0=(k+2)/(2(k^2+1))$.  If $t\ge t_0$, both terms in its displayed
decomposition are nonnegative.  If $t\le t_0\le1/k$, then
\[
 G_C\ge2(k-1)-(k^2-2)t\ge0;
\]
the last inequality is equivalent to
$3k(k^2-2k+2)\ge0$.  Hence $\mathcal P$ is concave across the band and
nonnegative at both endpoints in either region.  Therefore
$\mathcal P(t,\omega)\ge0$, proving \eqref{eq:L1ss}.
\end{proof}

\begin{theorem}[ordinary quasigroup gate]\label{thm:addition}
For every quasigroup operation on a set of order $k\ge4$, convolution
preserves $\body_k$.  In particular, additive convolution preserves
$\body_k$ on every finite abelian group of order $k$.
\end{theorem}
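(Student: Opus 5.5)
The theorem should follow by assembling the lemmas of this section. Let $\mu,\nu\in\body_k$ be laws on $Q$, let $\star$ be a quasigroup operation, and let $p$ be the convolution. Sort the atom vectors of $\mu$ and $\nu$ as $x$ and $y$. Lemma~\ref{lem:add-rearrangement} gives
\[
 p_k^\down\ge A(x,y),\qquad p_2^\down\le B(x,y).
\]
Lemma~\ref{lem:add-crossing}, which rests on Lemmas~\ref{lem:add-tangent}, \ref{lem:add-single} and~\ref{lem:L1ss}, gives $A(x,y)\ge\psi_k(B(x,y))$. Combining these, it remains to pass from $\psi_k(B)$ to $\psi_k(p_2^\down)$, that is, to show $\psi_k(B)\ge\psi_k(p_2^\down)$.

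This is the only step not already written down, and I would settle it by monotonicity. Lemma~\ref{lem:add-pairing-cap} gives $B(x,y)\le\bar t_k$, so $0\le p_2^\down\le B\le\bar t_k\le t_k^*$ by \eqref{eq:cap-increasing}. On $[0,t_k^*]$ the function $\psi_k$ is nondecreasing, so $\psi_k(p_2^\down)\le\psi_k(B)\le A\le p_k^\down$, which is exactly $p\in\body_k$.

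One point needs care: in the proof of Lemma~\ref{lem:add-single}, the lower endpoint $z=\psi_k(t)$ invokes Lemma~\ref{lem:L1ss}. Its hypothesis $2t+(k-2)\psi_k(t)\le1$ holds for $t=w_2$ of a member, by the chain in the proof of Lemma~\ref{lem:member-cap}. Hence every lemma applies to arbitrary members and no further case split is needed. The genuine difficulty of the gate, the scalar inequality $A\ge\psi_k(B)$, has already been handled in Lemma~\ref{lem:add-crossing}; here I expect only the monotonicity comparison to require attention.

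For the group statement, take $Q=G$ with its addition. Each translation $x\mapsto a+x$ is a bijection, so addition is a quasigroup operation, and $\oplus$ is the corresponding convolution. The first part then applies directly.
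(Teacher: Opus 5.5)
Your proposal is correct and is essentially the paper's proof: rearrangement gives $p_k^\down\ge A$ and $p_2^\down\le B$, the crossing lemma gives $A\ge\psi_k(B)$, and the pairing cap places both $p_2^\down$ and $B$ on the increasing branch, so $\psi_k(p_2^\down)\le\psi_k(B)\le p_k^\down$. Your side check is also right: the hypothesis of Lemma~\ref{lem:L1ss} at $t=w_2$ of a member follows from the chain in Lemma~\ref{lem:member-cap}, which the paper uses implicitly.
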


\begin{proof}
For two members with quasigroup convolution $p$,
Lemmas~\ref{lem:add-rearrangement} and
\ref{lem:add-crossing} give
\[
 p_k^\down\ge A\ge\psi_k(B).
\]
By Lemma~\ref{lem:add-pairing-cap}, both $p_2^\down$ and $B$ lie on the
increasing branch of $\psi_k$, so
$\psi_k(p_2^\down)\le\psi_k(B)\le p_k^\down$.
\end{proof}

\section{Multiplication: structural reduction}\label{sec:mul-reduction}

We turn to multiplication.  For the rest of
Sections~\ref{sec:mul-reduction}--\ref{sec:assembly}, let $Q$ have order
$k\ge4$ and carry an absorbing-zero quasigroup operation $\diamond$.
Thus $Q^\times=Q\setminus\{0\}$ is a quasigroup of order $n=k-1$.
Finite-field multiplication is the motivating special case.

Let $\mu\in\body_k$.  Separate its zero atom and nonzero block by writing
\begin{equation}\label{eq:floor-residual-one}
 a=\mu(0),\qquad r=1-a,\qquad
 m=p_k^\down(\mu),\qquad x=m\one+\hat x,\qquad R=\sum \hat x.
\end{equation}
Here $\one=(1,\ldots,1)\in\R^n$ is the all-ones vector indexed by
$Q^\times$, and $m$ is the \emph{global} smallest atom; in particular,
it need not be the minimum of the nonzero block when zero itself is smallest. 
The vector $x$ is the nonzero block of $\mu$: a subprobability vector on
$Q^\times$ of total mass $r$, read in the carrier labeling in
\eqref{eq:floor-residual-one} and \eqref{eq:nonzero-convolution}, and
identified with its sorted version whenever only the multiset of its
entries matters.
Use $b,s,\ell,y=\ell\one+\hat y,S$ for a second law $\nu$. 
The product has zero mass and nonzero common baseline
\begin{equation}\label{eq:mul-baseline}
 z=a+b-ab=1-rs,\qquad
 c=nm\ell+mS+\ell R=\frac{rs-RS}{n}.
\end{equation}
Indeed, on $Q^\times$ the product law is
\begin{equation}\label{eq:nonzero-convolution}
  w=c\one+\hat x\mathbin{*}\hat y,
\end{equation}
where $*$ is quasigroup convolution with respect to $\diamond$.

\subsection{The K/D dichotomy}

Call an input \emph{K-type} if its zero atom is largest and \emph{D-type}
otherwise; mnemonically, zero \emph{K}eeps the top in the first case, while
in the second the mass is \emph{D}ispersed among the nonzero elements.  If
$p=\max_{g\ne0}\mu(g)$, membership in $\body_k$ gives
\begin{equation}\label{eq:KD-dichotomy}
 \begin{array}{ll}
 \mathrm{K}:&m\ge\psi_k(p),\\[2pt]
 \mathrm{D}:&R\le r^k.
 \end{array}
\end{equation}
The K-line is immediate because $p$ is the second atom.  In the D-line the
second atom is at least $a$, so cap monotonicity gives
$m\ge\psi_k(a)=(r-r^k)/n$, hence $R=r-nm\le r^k$.

If either input is K-type, the product zero atom is largest.  For example,
if $a\ge\mu(g)$ for every $g\ne0$, then every nonzero product atom obeys
\[
 (\mu\otimes\nu)(g)
 =\sum_{g'\ne0}\mu(g')\nu\bigl(\sigma_g(g')\bigr)
 \le as\le a+b-ab=z,
\]
where $\sigma_g(g')$ is the unique nonzero solution $g''$ of
$g'\diamond g''=g$.  For fixed $g$, the map $g'\mapsto\sigma_g(g')$ is a
permutation of $Q^\times$, which gives the displayed sum bound.

Sort the two nonzero blocks as $x_1\ge\cdots\ge x_n$ and
$y_1\ge\cdots\ge y_n$, and define
\begin{align}
 A&=\sum_{i=1}^n x_i y_{n+1-i},
 &C&=\sum_{i=1}^n x_i y_i,\label{eq:mul-AC}\\
 B&=x_1y_2+x_2y_1+\sum_{i=3}^n x_i y_i.\label{eq:mul-B}
\end{align}

\begin{lemma}[multiplicative rearrangement]\label{lem:mul-rearrangement}
For the nonzero atoms of $\mu\otimes\nu$,
\[
 \min_{g\ne0}w_g\ge A,\qquad
 \max_{g\ne0}w_g\le C,
 \qquad
 \operatorname{second}_{g\ne0}w_g\le B,
\]
where $\operatorname{second}$ is the second largest value, counted with
multiplicity.
\end{lemma}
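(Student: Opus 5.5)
The plan is to mirror the proof of Lemma~\ref{lem:add-rearrangement}, now working inside the nonzero quasigroup $Q^\times$ of order $n$. Fix a nonzero output $g$. By absorption, a pair $(g',g'')$ with $g'\diamond g''=g$ must have both entries nonzero. Since $\diamond$ restricted to $Q^\times$ is a quasigroup operation, for each $g'\in Q^\times$ there is exactly one $g''=\sigma_g(g')\in Q^\times$ with $g'\diamond g''=g$, and $g'\mapsto\sigma_g(g')$ is a permutation of $Q^\times$. Hence
\[
 w_g=\sum_{g'\ne0}x_{g'}\,y_{\sigma_g(g')}.
\]
In sorted coordinates this reads $w_g=\sum_{i=1}^n x_i y_{\pi_g(i)}$ for some permutation $\pi_g$ of $\{1,\dots,n\}$. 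Note that this is exactly the fiber sum over the full blocks $x,y$; the splitting $c\one+\hat x*\hat y$ in \eqref{eq:nonzero-convolution} is the same quantity and is not needed here.

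The first two bounds then come straight from the rearrangement inequality \cite[Theorem~368]{hardy1952}. Among all permutations, $\sum_i x_iy_{\pi(i)}$ is smallest for the anti-aligned pairing and largest for the aligned pairing. This gives $A\le w_g\le C$ for every $g\ne0$.

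For the second-largest bound, I will repeat the exchange argument from Lemma~\ref{lem:add-rearrangement} with $k$ replaced by $n$. Suppose $\pi_g(1)\ne1$. If $\pi_g(1)\ne2$, swap the partners of positions $1$ and $\pi_g^{-1}(2)$; the scalar product does not decrease, since the change is a product of two nonnegative differences. This places $y_2$ with $x_1$. Next, swap the partner of position $2$ with that of $\pi_g^{-1}(1)$ if necessary; the same sign argument puts $y_1$ with $x_2$. Finally, pair the remaining indices $3,\dots,n$ in aligned order, which again does not decrease the sum. Hence every $g$ with $\pi_g(1)\ne1$ has $w_g\le B$.

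It remains to count the outputs with $\pi_g(1)=1$. Fix carrier elements $h_1,h_1'\in Q^\times$ carrying $x_1$ and $y_1$. Then $\pi_g(1)=1$ means $g=h_1\diamond h_1'$, so at most one nonzero output has this property. All other nonzero atoms are at most $B$. Therefore the second largest value, counted with multiplicity, is at most $B$.

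The only delicate point is ties among sorted atoms. The sorted labeling is a fixed bijection with the carrier, so the argument uses one fixed choice of $h_1,h_1'$, and the "at most one output" count is unaffected by ties. One should also check that the exchanges never touch zero, which holds because zero is excluded from every fiber over $g\ne0$ by absorption. I expect no real obstacle beyond this bookkeeping.
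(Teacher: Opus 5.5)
Your proposal is correct and matches the paper's proof: you write each nonzero atom as a scalar product of the sorted block $x$ with a permutation of $y$, using absorption and the quasigroup structure on $Q^\times$, and apply the rearrangement inequality for the extreme bounds. For the second-largest bound you reuse the two exchanges of Lemma~\ref{lem:add-rearrangement}, together with the observation that at most one output pairs the fixed carriers of $x_1$ and $y_1$; you simply spell out the bookkeeping the paper leaves implicit.
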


\begin{proof}
Every $w_g$ is a scalar product of $x$ with a permutation of $y$.  The
first two bounds are the two directions of the rearrangement
inequality~\cite[Theorem~368]{hardy1952}.
At most one output element pairs fixed choices of $x_1$ and $y_1$; the same two
exchanges used in Lemma~\ref{lem:add-rearrangement} bound every other scalar
product by $B$.
\end{proof}

The cap of Lemma~\ref{lem:add-pairing-cap} extends to the nonzero
blocks: the atom missing from them is zero, and the zero atom also
dominates the global floor.

\begin{lemma}[multiplicative pairing cap]\label{lem:mul-pairing-cap}
For $\mu,\nu\in\body_k$, the pairing \eqref{eq:mul-B} of their sorted
nonzero blocks satisfies $B\le\bar t_k$.
\end{lemma}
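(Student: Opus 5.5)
The plan is to repeat the argument of Lemma~\ref{lem:add-pairing-cap} on the nonzero blocks. The only new inputs are two observations. First, the second atom of a nonzero block never exceeds the second atom of the full law. Second, the zero atom that is missing from a block still counts towards the $k-2$ ``low'' atoms of the full law.

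\textbf{Step 1: crude bound on $B$.} Write the sorted blocks as $x_1\ge\cdots\ge x_n$ and $y_1\ge\cdots\ge y_n$, with totals $r=1-a\le1$ and $s=1-b\le1$. Using $y_i\le y_2$ for $i\ge3$,
\[
 B\le y_2\Bigl(x_1+\sum_{i\ge3}x_i\Bigr)+x_2y_1\le y_2(r-x_2)+x_2y_1\le y_2+x_2(y_1-y_2).
\]

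\textbf{Step 2: cap the block second atoms.} The two largest nonzero atoms of $\mu$ are two distinct atoms of $\mu$. Hence $x_2\le p_2^\down(\mu)$, and by Lemma~\ref{lem:member-cap} this gives $x_2\le\bar t_k$. In the same way, $y_2\le p_2^\down(\nu)\le\bar t_k$. Since $y_1-y_2\ge0$, Step~1 then yields
\[
 B\le y_2+\bar t_k(y_1-y_2).
\]

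\textbf{Step 3: the key comparison $y_1\le\mathcal H_k(y_2)$.} The $k$ atoms of $\nu$ consist of $y_1$, $y_2$, and $k-2$ further atoms. Those further atoms are the zero atom $b$ together with $y_3,\dots,y_n$. Each of them is at least the global floor $\ell=p_k^\down(\nu)$. Membership gives $\ell\ge\psi_k(p_2^\down(\nu))$. Both $y_2$ and $p_2^\down(\nu)$ lie in $[0,\bar t_k]\subseteq[0,t_k^*]$ by \eqref{eq:cap-increasing}. Since $\psi_k$ is nondecreasing there and $y_2\le p_2^\down(\nu)$, we get $\ell\ge\psi_k(y_2)$. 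Summing all atoms of $\nu$ to $1$ then gives
\[
 y_1\le 1-y_2-(k-2)\psi_k(y_2)=\mathcal H_k(y_2).
\]
This is the step that needs care, because $x$ and $y$ are blocks rather than laws. The point to check is precisely that the zero atom is at least the global floor, so it can replace the missing $k$th block entry in the count.

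\textbf{Step 4: convexity conclusion.} Combining Steps~2 and~3,
\[
 B\le f(y_2):=y_2+\bar t_k\bigl(\mathcal H_k(y_2)-y_2\bigr),\qquad y_2\in[0,\bar t_k].
\]
The function $f$ is convex, because $\mathcal H_k=1-t-(k-2)\psi_k$ and $\psi_k$ is concave. At the endpoints, $f(0)=\bar t_k\,\mathcal H_k(0)=\bar t_k$. Also $f(\bar t_k)=\bar t_k$, since \eqref{eq:cap-root} says exactly that $\mathcal H_k(\bar t_k)=\bar t_k$. A convex function on an interval is bounded by its endpoint values, so $f\le\bar t_k$ on $[0,\bar t_k]$, and therefore $B\le\bar t_k$.
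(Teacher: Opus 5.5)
Your proof is correct and follows the paper's argument essentially step for step. It uses the same bound $B\le y_2+x_2(y_1-y_2)$, caps $x_2,y_2$ through Lemma~\ref{lem:member-cap}, and obtains $y_1\le\mathcal H_k(y_2)$ by counting the zero atom among the $k-2$ atoms lying above the global floor. It then closes with the same convexity-and-endpoint argument, using $\mathcal H_k(0)=1$ and $\mathcal H_k(\bar t_k)=\bar t_k$.
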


\begin{proof}
A second nonzero atom is at most the full second atom, so
Lemma~\ref{lem:member-cap} gives $x_2\le\bar t_k$ and
$y_2\le p_2^\down(\nu)\le\bar t_k$.  The $k-2$ atoms of $\nu$ other than
$y_1,y_2$ --- the zero atom among them --- are each at least
$p_k^\down(\nu)\ge\psi_k\bigl(p_2^\down(\nu)\bigr)\ge\psi_k(y_2)$, by
membership and cap monotonicity.  Total mass therefore gives
$y_1\le1-y_2-(k-2)\psi_k(y_2)=\mathcal H_k(y_2)$.  Bounding the tail of
$B$ by $y_2\sum_{i\ge3}x_i$ and the mass of the first block by one,
\[
 B\le y_2+x_2(y_1-y_2)
 \le y_2+\bar t_k\bigl(\mathcal H_k(y_2)-y_2\bigr).
\]
As in Lemma~\ref{lem:add-pairing-cap}, the right side is convex in
$y_2$ on $[0,\bar t_k]$ and equals $\bar t_k$ at both endpoints
($\mathcal H_k(0)=1$; $\mathcal H_k(\bar t_k)=\bar t_k$ by
\eqref{eq:cap-root}), so $B\le\bar t_k$.
\end{proof}

\subsection{What remains after rearrangement}

For two D-type inputs, \eqref{eq:KD-dichotomy} and
\eqref{eq:mul-baseline} imply
\begin{equation}\label{eq:DD-baseline}
 A\ge c\ge\frac{rs-(rs)^k}{n}=\psi_k(z).
\end{equation}
They also imply
\begin{equation}\label{eq:z-ge-c}
 z\ge c.
\end{equation}
Indeed, this is equivalent to $RS\ge krs-(k-1)$.  It is automatic when
$rs\le(k-1)/k$.  Otherwise $r,s>(k-1)/k$ and, because the global floor is
at most the zero atom,
\[
 R\ge kr-(k-1),\qquad S\ge ks-(k-1).
\]
The product of the right sides exceeds $krs-(k-1)$ by
$k(k-1)(1-r)(1-s)\ge0$.

The multiplication gate is now reduced to the following scalar statements:
\begin{align}
 \text{K-core:}\quad&A\ge\psi_k(C)
 &&\text{if at least one input is K-type},\label{eq:K-core}\\
 \text{D-core:}\quad&c\ge\psi_k(B)
 &&\text{for two D-type inputs in the branch }B>z.
 \label{eq:D-core}
\end{align}
Sections~\ref{sec:K-core} and \ref{sec:D-core} prove them.

\section{The K-core}\label{sec:K-core}

We now prove \eqref{eq:K-core}.  The case in which both inputs are K-type
contains the central rank inequality; the mixed K$\times$D case is then
compared to a sharp boundary factor.

\subsection{\texorpdfstring{K$\times$K}{K x K}: floor and residual reduction}

Write the sorted nonzero blocks as
\[
 x_1\ge\cdots\ge x_n,\qquad y_1\ge\cdots\ge y_n,
\]
so that $p=x_1$ and $v=y_1$.  Both zero atoms are largest, so the global
floors are attained on the blocks: $m=x_n$, $\ell=y_n$, and hence
$U=p-m$ and $R=\sum_i(x_i-m)$, with $V,S$ the analogous quantities for
the second block.  K-type membership gives
\begin{equation}\label{eq:KK-hypotheses}
 m\ge\psi_k(p),\qquad \ell\ge\psi_k(v),\qquad
 1-\sum_i x_i\ge p,\qquad1-\sum_i y_i\ge v.
\end{equation}
The floor decomposition gives
\begin{equation}\label{eq:KK-floor}
 A\ge c,\qquad C\le c+E,\qquad
 E:=\min(RV,SU).
\end{equation}
Put $\Theta=c+E$.  If $E=SU$, then
\[
 \Theta=\ell(nm+R)+pS\le p(n\ell+S)\le p;
\]
the other branch gives $\Theta\le v$.  Thus $\Theta$ lies in the cap and
it suffices to prove $c\ge\psi_k(\Theta)$.

If $U=0$ or $V=0$, the conclusion follows directly from the floor
condition.  Otherwise define the effective rank
\[
 \lambda=\min(R/U,\,S/V)\in[1,n].
\]
Lower the larger residual excess.  This subtracts the same nonnegative
quantity from $c$ and $\Theta$; because $z\mapsto z-\psi_k(z)$ is
increasing, it can only decrease the margin.  We may therefore assume
\begin{equation}\label{eq:KK-diagonal}
 R=\lambda U,\qquad S=\lambda V.
\end{equation}
Let
\[
 a_0=m/p,\qquad b_0=\ell/v,\qquad
 \kappa=\lambda+(n-\lambda)a_0b_0.
\]
Then exact expansion gives
\begin{equation}\label{eq:KK-T}
 \Theta=pv\kappa,\qquad \Theta-c=\lambda pv(1-a_0)(1-b_0).
\end{equation}

Define continuously at zero
\begin{equation}\label{eq:h-def}
 h(t):=\frac{\delta_k(t)}{t^2},\qquad h(0)=k/2.
\end{equation}
The K-floor inequalities imply
\begin{equation}\label{eq:h-floors}
 \frac{1-a_0}{p}\le h(p),\qquad
 \frac{1-b_0}{v}\le h(v).
\end{equation}

\begin{lemma}[multiplicative $h$ inequality]\label{lem:h-multiplicative}
If $M\ge k/2$, $t_1,t_2\in[0,1]$, and $Mt_1t_2\le1$, then
\begin{equation}\label{eq:h-multiplicative}
 M h(Mt_1t_2)\ge h(t_1)h(t_2).
\end{equation}
\end{lemma}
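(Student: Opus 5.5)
The plan has three steps: reduce to the extreme value $M=k/2$, rewrite $h$ as an integral average, and then compare the two sides in a few regimes of $t_1,t_2$.

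\textbf{Reduction to $M=k/2$.} Put $T=t_1t_2$ and $\Phi(M):=Mh(MT)$. Then $\Phi'(M)=\frac{d}{ds}\bigl(s\,h(s)\bigr)\big|_{s=MT}$, and $s\,h(s)=\delta_k(s)/s$. Since $\delta_k$ is convex with $\delta_k(0)=0$ (Section~\ref{subsec:scalar}), the slope $\delta_k(s)/s$ is nondecreasing. So $\Phi$ is nondecreasing in $M$. The constraint $Mt_1t_2\le1$ only gets easier as $M$ decreases. Hence it suffices to prove
\[
 \tfrac k2\,h\bigl(\tfrac k2 t_1t_2\bigr)\ge h(t_1)h(t_2),\qquad t_1,t_2\in[0,1],\ \tfrac k2t_1t_2\le1 .
\]
(If $T=0$, both sides equal $(k/2)^2$, or the inequality is trivial.)

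\textbf{Integral representation.} Taylor's formula with integral remainder gives $(1-t)^k-1+kt=k(k-1)\int_0^t(t-s)(1-s)^{k-2}\,ds$. Since $\delta_k(t)=\bigl((1-t)^k-1+kt\bigr)/(k-1)$, this yields
\[
 h(t)=k\int_0^1(1-u)(1-tu)^{k-2}\,du=\tfrac k2\,g(t),\qquad g(t):=\E\bigl[(1-tU)^{k-2}\bigr],
\]
where $U$ has density $2(1-u)$ on $[0,1]$. The function $g$ is decreasing and log-convex, being a mixture of log-convex functions of $t$, with $g(0)=1$. The target becomes
\[
 g\bigl(\tfrac k2 t_1t_2\bigr)\ge g(t_1)\,g(t_2).
\]
Substituting $t_i=\tfrac2k e^{y_i}$ and setting $F(y):=\log g\bigl(\tfrac2ke^{y}\bigr)$, this is exactly the superadditivity $F(y_1)+F(y_2)\le F(y_1+y_2)$. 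So the statement is a multiplicative-superadditivity property of $\log g$ in the natural scale $s=kt/2$.

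\textbf{Main step: the comparison of the two sides.} For the left side I would use Jensen's inequality, since $x\mapsto(1-x)^{k-2}$ is convex:
\[
 g(\tau)\ge(1-\tau/3)^{k-2},\qquad\text{applied at }\tau=\tfrac k2t_1t_2\le1 .
\]
For the right side I would use two upper bounds:
\[
 g(t)\le\min\Bigl(1,\ \frac{2}{(k-1)t}\Bigr),\qquad
 g(t)\le e^{-(k-2)t/3}\ \text{for small }t .
\]
The second bound would come from $1-tu\le e^{-tu}$ together with the Laplace transform of $U$, which I would need to sharpen slightly. I would then split into cases:
\begin{enumerate}
\item \emph{Both $t_i$ of order $1/k$.} Compare exponents, $(k-2)\tfrac k2t_1t_2/3$ against $(k-2)(t_1+t_2)/3$, using $\tfrac k2t_1t_2\le\min(t_1,t_2)$ whenever the other $t_j\le 2/k$.
\item \emph{One $t_i$ large.} The factor $2/((k-1)t_i)$ absorbs the loss.
\end{enumerate}

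I expect this case split to be the main obstacle. Near $t\approx 2/k$ both sides are of comparable size and the crude exponential bounds do not obviously close. If so, I would instead prove superadditivity of $F$ directly. The approach would be to show that $y\mapsto F'(y)$ is nonincreasing and negative, equivalently that $s\,g'(s)/g(s)$ is monotone. Superadditivity would then follow from $F(y_1+y_2)-F(y_1)\ge F(y_2)-F(-\infty)$ by comparing derivatives along translated intervals. The monotonicity would be checked through the moment form $-s\,g'(s)/g(s)=(k-2)s\,\E[U(1-sU)^{k-3}]/\E[(1-sU)^{k-2}]$, using a Chebyshev correlation argument in $U$.
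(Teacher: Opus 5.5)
Your reduction to $M=k/2$ (via monotonicity of $\delta_k(s)/s$) is correct. So are the representation $h=\tfrac k2\,g$ with $g(t)=\E[(1-tU)^{k-2}]$ and the superadditivity reformulation. Note that the case $t_j\le 2/k$ for some $j$ is immediate: $\tfrac k2t_1t_2\le t_i$, $g$ is decreasing, and $g\le1$. So everything hinges on the region $t_1,t_2>2/k$ with $\tfrac k2t_1t_2\le1$, and there your main step fails.
\begin{itemize}
\item The bound $g(t)\le e^{-(k-2)t/3}$ is false. Jensen gives $\E e^{-\lambda U}\ge e^{-\lambda/3}$, the wrong direction, and comparing $t^2$-coefficients shows $g(t)>e^{-(k-2)t/3}$ for small $t>0$ once $k\ge6$.
\item The Jensen lower bound $(1-\tau/3)^{k-2}$ is exponentially small for $\tau\asymp1$, whereas $g(1)=2/k$.
\item At the corner $t_1=t_2=\sqrt{2/k}$, your bound $g(t)\le2/((k-1)t)$ gives only $g(t_1)g(t_2)\le 2k/(k-1)^2$, which exceeds the left side $g(1)=2/k$. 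The corner is saved only by the exact value $h(1)=1$ and the sharper bound $h(t)\le1/t$ (from $\psi_k\ge0$).
\item In the interior $\tau<1$, even $h\le1/t$ on the right side is useless, since it would require $\delta_k(\tau)\ge\tau$.
\end{itemize}
Pointwise bounds on the two sides cannot replace a structural argument.

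Your fallback names the right structural fact: concavity of $F$ is exactly the paper's concavity of $u\mapsto\log h(e^u)$. But both of its steps fail as stated.

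First, concavity together with $F\le0$ and $F(-\infty)=0$ does not imply superadditivity. For example, $F(y)=-e^{y}$ has all three properties, yet $F(2y)<2F(y)$ whenever $e^y>2$. Your intervals $[y_1,y_1+y_2]$ and $(-\infty,y_2]$ are not translates, and for $y_1>0$ concavity orients the comparison the wrong way. What concavity actually gives is the reduction to the diagonal $y_1=y_2=y$, plus the fact that $2F(y)-F(2y)$ decreases for $y<0$ and increases for $y>0$. It then remains to check the endpoints $y\to-\infty$ (value $0$) and $\tfrac k2t^2=1$. The latter is the missing input, supplied in the paper by $h(1)=1$ and $h(t)\le1/t$.

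Second, a Chebyshev correlation argument points the wrong way. Let $\E_s$ be the law tilted by $(1-sU)^{k-2}$ and put $\phi=sU/(1-sU)$. Then $-sg'/g=(k-2)\E_s[\phi]$ and
\[
 \frac{d}{ds}\E_s[\phi]=\E_s\bigl[U(1-sU)^{-2}\bigr]-\frac{k-2}{s}\operatorname{Var}_s(\phi),
\]
so correlation inequalities only certify that the competing term is nonnegative. The paper instead proves the concavity from the finite sum $h(t)=\frac1n\sum_{j=0}^{n-1}(n-j)(1-t)^j$. It uses an exact identity expressing $(t\frac{d}{dt})^2\log h$ as $-(n+1)tP_n(1-t)/(n^2h^2)$, where $P_n$ has positive coefficients.
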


\begin{proof}
From the finite-sum representation
\[
 h(t)=\frac1n\sum_{j=0}^{n-1}(n-j)(1-t)^j
\]
one obtains
\[
 \left(t\frac{\mathrm d}{\mathrm d t}\right)^2\log h(t)
 =-\frac{(n+1)tP_n(1-t)}{n^2h(t)^2}\le0,
\]
where
\[
 P_n(z)=\sum_{j=0}^{2n-4}
 \binom{\min(j,2n-4-j)+3}{3}z^j
\]
has positive coefficients.  Thus $u\mapsto\log h(e^u)$ is concave, and
for fixed $t_1t_2$ the product $h(t_1)h(t_2)$ is maximized at
$t_1=t_2=\sqrt{t_1t_2}$.  It remains to prove
$h(t)^2\le M h(Mt^2)$ for $0\le t\le M^{-1/2}$.  The logarithmic
slope $-\mathrm d\log h(t)/\mathrm d\log t$ is increasing by the same identity, so the
quotient $h(t)^2/(M h(Mt^2))$ decreases on $[0,1/M]$ and increases
on $[1/M,1/\sqrt M]$.  At $t=0$ it is $(k/2)/M\le1$; at the other
endpoint it is at most one because $h(1)=1$ and $h(t)\le1/t$.  This
proves \eqref{eq:h-multiplicative}.
\end{proof}

The remaining input is the rank inequality.

\begin{lemma}[K-rank inequality]\label{lem:K-rank}
Under \eqref{eq:KK-hypotheses} and \eqref{eq:KK-diagonal},
\begin{equation}\label{eq:K-rank}
 \kappa^2\ge\lambda k/2.
\end{equation}
\end{lemma}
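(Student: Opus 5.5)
The plan is to remove every parameter except the effective rank $\lambda$, and then check a one-variable inequality. We have $\kappa=\lambda+(n-\lambda)a_0b_0$ with $a_0,b_0\ge0$ and $\lambda\le n$, so $\kappa\ge\lambda$. Hence $\kappa^2\ge\lambda^2\ge\lambda k/2$ whenever $\lambda\ge k/2$. It remains to treat $1\le\lambda<k/2$. There the term $(n-\lambda)a_0b_0$ has to be bounded from below, so we need lower bounds on the floor ratios $a_0=m/p$ and $b_0=\ell/v$ that depend only on $\lambda$.

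To get those bounds, we combine the two K-type hypotheses in \eqref{eq:KK-hypotheses} with the diagonal normalization \eqref{eq:KK-diagonal}. The nonzero mass is $\sum_ix_i=nm+R=nm+\lambda(p-m)$, and the zero atom is at least $p$. This gives
\[
 (1+\lambda)p+(n-\lambda)m\le1,
\]
and with $m\ge\psi_k(p)$ it becomes $(1+\lambda)p+(n-\lambda)\psi_k(p)\le1$. The left side is increasing in $p$, by the same derivative bound used for \eqref{eq:cap-root}, so $p\le p_\lambda$. Here $p_\lambda$ is the root of $(1+\lambda)t+(n-\lambda)\psi_k(t)=1$; for $\lambda=1$ this root is exactly the cap $\bar t_k$. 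Since $\psi_k$ is concave with $\psi_k(0)=0$, the ratio $\varphi(t)=\psi_k(t)/t$ is nonincreasing, which is the content of \eqref{eq:star-shaped}. Therefore
\[
 a_0\ge\varphi(p)\ge\varphi(p_\lambda),
\]
and the same argument gives $b_0\ge\varphi(p_\lambda)$. The goal then reduces to the scalar inequality
\[
 \bigl(\lambda+(n-\lambda)\varphi(p_\lambda)^2\bigr)^2\ge\lambda k/2,\qquad 1\le\lambda<k/2.
\]

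To prove this, we use the defining relation to write $\varphi(p_\lambda)=(1-(1+\lambda)p_\lambda)/((n-\lambda)p_\lambda)$. This eliminates $\psi_k$ and turns the target into a statement about how small $p_\lambda$ must be. In the bulk of the range, $p_\lambda$ is of order $1/(1+\lambda)$ up to the correction from $\psi_k$, and then $\varphi(p_\lambda)$ is bounded below by a constant. One expects $(n-\lambda)\varphi^2$ to be of order $k$ there, while the right side $\sqrt{\lambda k/2}$ is only of order $k^{1/2}$ times $\sqrt\lambda$, so this region should have slack.

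The main obstacle is the low-rank end $\lambda$ near $1$ for small $k$, in particular $k=4,5$. There we need $1+(n-1)\varphi(\bar t_k)^2\ge\sqrt{k/2}$, and the margin may be thin. My first attempt will be to parametrize by $t=p_\lambda$ instead of $\lambda$. Solving the defining equation gives $\lambda=(1-t-n\psi_k(t))/(t-\psi_k(t))$, so the inequality becomes a one-variable inequality in $t$ on $[p_{k/2},\bar t_k]$. I would then substitute $s=1-t$, so that $\psi_k$ is polynomial by \eqref{eq:psi-identities}. After that I would bound $s^k$ on the relevant interval by Bernoulli-type estimates, as in Lemma~\ref{lem:L1ss}, and check the finitely many small $k$ directly. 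If the symmetric reduction $a_0=b_0$ loses too much, I would fall back on keeping $a_0b_0$ and using the log-concavity structure from Lemma~\ref{lem:h-multiplicative}, since $1-a_0\le p\,h(p)$ by \eqref{eq:h-floors}.
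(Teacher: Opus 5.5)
Your reduction is correct and is the paper's own. The bound $\kappa\ge\lambda$ settles $\lambda\ge k/2$. The mass bound $(1+\lambda)p+(n-\lambda)\psi_k(p)\le1$, which is \eqref{eq:rank-domain}, together with monotonicity of $\varphi=\psi_k(t)/t$, reduces the lemma to $J_\lambda:=\lambda+(n-\lambda)\varphi(p_\lambda)^2\ge\sqrt{\lambda k/2}$ for $1\le\lambda<k/2$. The paper phrases this as ``assume $p\ge v$ and push \eqref{eq:rank-domain} to equality''.

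\textbf{The gap.} That scalar inequality is the entire content of the lemma, and your proposal does not prove it. The heuristic you give for the bulk of the range is also wrong:
\begin{itemize}
\item Since $1-t-n\psi_k(t)=(1-t)^k$, your parametrization reads $\lambda(t)=(1-t)^k/\delta_k(t)$. With $t=c/k$ this gives $\lambda\approx k e^{-c}/(c-1+e^{-c})$ and $\varphi\approx(1-e^{-c})/c$.
\item So unless $\lambda\asymp k$, the $\psi_k$ term dominates the constraint rather than correcting it. Then $p_\lambda$ is of order $\log(k/\lambda)/k$ (already $\bar t_k$ is of order $\log k/k$, not $1/2$), and $\varphi(p_\lambda)$ decays like $1/\log(k/\lambda)$.
\item In the regime $\lambda\asymp k$, where $\varphi$ does stay bounded below, $\sqrt{\lambda k/2}$ is also of order $k$, so there is no $k$ versus $\sqrt{k}$ slack. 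With $\Lambda=\lambda/k$, the limiting inequality $\Lambda+(1-\Lambda)\varphi^2\ge\sqrt{\Lambda/2}$ holds with a relative margin of only about ten percent, worst near $kp_\lambda\approx2.85$. This margin does not improve as $k\to\infty$.
\item First-order Bernoulli bounds such as the band in Lemma~\ref{lem:L1ss} place $(1-t)^{k-1}$ only in roughly $[\text{negative},\,0.26]$ there, while the true value is about $0.058$. The large-$k$ step as you describe it would not close.
\item At the other end, for $k=4$ and $\lambda=1$ one has $1+2\varphi(\bar t_4)^2\approx1.418$ against $\sqrt2\approx1.414$. This thin margin must be certified over a continuum of real $\lambda$, so ``check small $k$ directly'' is itself a nontrivial certificate.
\end{itemize}

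\textbf{The missing idea.} What you lack is a mechanism giving uniformity in both $k$ and $\lambda$.
\begin{itemize}
\item The paper argues by contradiction through the critical value $\varphi_0$ at which $J=\sqrt{\lambda k/2}$ would hold. Since $\xi\mapsto\xi(1-1/\xi)^k$ is increasing, this reduces the lemma to the explicit inequality \eqref{eq:rank-boundary-target}, and no implicit root has to be controlled.
\item Appendix~\ref{app:rank} then substitutes $y=\sqrt{k/(2\lambda)}$ and $\varepsilon=1/\lambda$, treating $k=2y^2/\varepsilon$ as a real parameter. At fixed $y$ the left side of \eqref{eq:rank-E-target} increases in $\varepsilon$ and the right side decreases, so only the edge $\varepsilon=\min(1,y^2/2)$ matters.
\item That edge consists of $\lambda=1$ with real $k\ge4$, and $k=4$ with $1\le\lambda<2$. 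Each is a one-variable inequality, closed by explicit Taylor and Bernstein polynomial certificates.
\end{itemize}
Without this device, or an equally effective replacement (sharp second-order bounds on $(1-c/k)^k$ plus certified finite checks up to an explicit threshold), your argument stops exactly where the real work begins.
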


\begin{proof}
The reduction is short, while the two endpoint certificates are collected in
Appendix~\ref{app:rank}.  Put $\varphi(t)=\psi_k(t)/t$ continuously at
zero and assume $p\ge v$.  Since $\varphi$ decreases,
\[
 \kappa\ge J:=\lambda+(n-\lambda)\varphi(p)^2.
\]
The mass and largest-zero constraints give
\begin{equation}\label{eq:rank-domain}
 (n-\lambda)\psi_k(p)+(\lambda+1)p\le1.
\end{equation}
For fixed $\lambda$, $J$ decreases with $p$, so push to equality in
\eqref{eq:rank-domain}.  With $\xi=1/p$ and $\varphi=\varphi(p)$ this gives
\begin{equation}\label{eq:rank-identities}
 \xi=\lambda+1+(n-\lambda)\varphi,\qquad
 \lambda(1-\varphi)=\xi(1-1/\xi)^k.
\end{equation}
A hypothetical $J<\sqrt{\lambda k/2}$ is contradicted by the boundary
inequality
\begin{equation}\label{eq:rank-boundary-target}
 \xi_0(1-1/\xi_0)^k<\lambda(1-\varphi_0),
\end{equation}
where
\[
 \varphi_0^2=\frac{\sqrt{\lambda k/2}-\lambda}{n-\lambda},\qquad
 \xi_0=\lambda+1+(n-\lambda)\varphi_0.
\]
Indeed, $J<\sqrt{\lambda k/2}$ forces $\varphi<\varphi_0$, hence
$\xi<\xi_0$ by the first identity in \eqref{eq:rank-identities}.  Since
$\xi\mapsto\xi(1-1/\xi)^k$ increases for $\xi>1$ (its logarithmic
derivative is $(\xi+k-1)/(\xi(\xi-1))$), the second identity and
\eqref{eq:rank-boundary-target} give
\[
 \lambda(1-\varphi)=\xi(1-1/\xi)^k\le\xi_0(1-1/\xi_0)^k
 <\lambda(1-\varphi_0)<\lambda(1-\varphi),
\]
a contradiction.
Appendix~\ref{app:rank} proves \eqref{eq:rank-boundary-target} uniformly
for $1\le\lambda<k/2$; the case $\lambda\ge k/2$ is immediate from
$\kappa\ge\lambda$.
\end{proof}

\begin{theorem}[K$\times$K core]\label{thm:KK-core}
For two K-type members, $A\ge\psi_k(C)$.
\end{theorem}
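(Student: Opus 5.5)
The proof assembles the reductions already in place. By \eqref{eq:KK-floor} we have $A\ge c$ and $C\le\Theta$. We also showed $\Theta\le\max(p,v)$, which lies in the cap by Lemma~\ref{lem:member-cap}, since $p$ and $v$ are second atoms of K-type members. Hence $\psi_k(C)\le\psi_k(\Theta)$ by monotonicity on the increasing branch. It therefore suffices to show $c\ge\psi_k(\Theta)$, i.e.\ $\delta_k(\Theta)\ge\Theta-c$. The degenerate cases $U=0$ or $V=0$ are handled by the floor condition directly. If $U=0$, then $x=m\one$ and $\Theta\le c+0$, so $c=\Theta$ and $c\ge\psi_k(c)$ holds trivially. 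Otherwise we may pass to the diagonal case \eqref{eq:KK-diagonal}, since lowering the larger residual only decreases the margin $\delta_k(\Theta)-(\Theta-c)$.

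In the diagonal case, rewrite the target with \eqref{eq:KK-T}:
\[
 \delta_k(\Theta)=\Theta^2 h(\Theta)=p^2v^2\kappa^2 h(pv\kappa)
 \;\ge\;\lambda pv(1-a_0)(1-b_0)=\Theta-c .
\]
By \eqref{eq:h-floors}, the right side is at most $\lambda p^2v^2 h(p)h(v)$. The target thus reduces to
\[
 \kappa^2 h(\kappa pv)\ge\lambda\, h(p)h(v).
\]
Set $M=\kappa$, $t_1=p$, $t_2=v$. We have $M\ge\lambda\ge1$, and Lemma~\ref{lem:K-rank} gives $\kappa\ge\sqrt{\lambda k/2}$. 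Also $Mt_1t_2=\Theta\le1$.

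If $\kappa\ge k/2$, Lemma~\ref{lem:h-multiplicative} gives $\kappa h(\kappa pv)\ge h(p)h(v)$. It then remains to have $\kappa\ge\lambda$, which holds because $\kappa=\lambda+(n-\lambda)a_0b_0\ge\lambda$, as $\lambda\le n$.

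The main obstacle is the range $\kappa<k/2$, where Lemma~\ref{lem:h-multiplicative} does not apply with $M=\kappa$. Here I would first try applying it with $M'=k/2\ge\kappa$ after rescaling. Because $h$ is decreasing (since $\log h(e^u)$ is concave and decreasing, with $h(0)=k/2$), we have $h(\kappa pv)\ge h(M'pv)$ whenever $M'pv\le1$. The inequality $M'h(M'pv)\ge h(p)h(v)$ then gives
\[
 \kappa^2h(\kappa pv)\ge \kappa^2 h(M'pv)\ge \frac{\kappa^2}{M'}\,h(p)h(v)\ge\lambda h(p)h(v),
\]
using exactly $\kappa^2\ge\lambda k/2=\lambda M'$ from the K-rank inequality. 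This explains the form $\kappa^2\ge\lambda k/2$ in Lemma~\ref{lem:K-rank}.

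The condition $M'pv\le1$ must still be checked. Since $p,v\le\bar t_k$, we get $(k/2)\bar t_k^2\le1$, because $\bar t_k\le t_k^*<1/2$ and $\bar t_k$ is roughly of order $\log k/k$. If this bound turns out to fail for small $k$, the fallback is a convex combination: apply the lemma with $M=\max(\kappa,\min(k/2,1/(pv)))$, and when $1/(pv)<k/2$ use $h(1)=1$ together with $h(t)\le1/t$ directly. Finally, combining $c\ge\psi_k(\Theta)\ge\psi_k(C)$ with $A\ge c$ yields $A\ge\psi_k(C)$.
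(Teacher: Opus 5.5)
Your proof is correct. Everything up to the reduced target $\kappa^2h(\kappa pv)\ge\lambda h(p)h(v)$ follows the paper's route: the floor/residual reduction, diagonalization, \eqref{eq:KK-T}, and \eqref{eq:h-floors}. The only difference is how Lemma~\ref{lem:h-multiplicative} is applied at the end.

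The paper takes $M=\kappa^2/\lambda$, which is at least $k/2$ by Lemma~\ref{lem:K-rank}, with $t_1=p$ and $t_2=v\lambda/\kappa\le v$. Then $Mt_1t_2=\kappa pv=\Theta\le1$ holds automatically. Monotonicity of $h$ is used only for $h(v\lambda/\kappa)\ge h(v)$, so there is no case split and no extra size condition.

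You keep $t_1=p$, $t_2=v$ and in effect take $M=\max(\kappa,k/2)$, using monotonicity of $h$ on the product argument instead. This is valid and makes the role of $\kappa^2\ge\lambda k/2$ very transparent, as $\kappa^2/(k/2)\ge\lambda$. It does, however, require the side condition $(k/2)pv\le1$.

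That condition is true, but your justification does not establish it. The bound $\bar t_k<1/2$ only gives $(k/2)\bar t_k^2<k/8$, which is at most $1$ only for $k\le8$, and ``roughly of order $\log k/k$'' is heuristic. It closes in one line: $\bar t_k\le t_k^*=1-e^{-\log k/(k-1)}\le\log k/(k-1)$ and $(\log k)^2\le 4k/e^2$. Hence $(k/2)\bar t_k^2\le\frac{k(\log k)^2}{2(k-1)^2}\le\frac{32}{9e^2}<1$ for all $k\ge4$. With that, the fallback you sketch is unnecessary.
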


\begin{proof}
By Lemma~\ref{lem:K-rank}, set $M=\kappa^2/\lambda\ge k/2$ and apply
Lemma~\ref{lem:h-multiplicative} with $t_1=p$ and
$t_2=v\lambda/\kappa$.  Since $\kappa\ge\lambda$, $t_2\le v$, and
$h$ is decreasing.  Therefore
\[
 \frac{\kappa^2}{\lambda}h(\Theta)
 \ge h(p)h(v\lambda/\kappa)\ge h(p)h(v).
\]
Together with \eqref{eq:h-floors}, this is
\[
 \kappa^2h(\Theta)\ge
 \lambda\frac{1-a_0}{p}\frac{1-b_0}{v},
\]
which is equivalent, by \eqref{eq:KK-T}, to $c\ge\psi_k(\Theta)$.  Now
\eqref{eq:KK-floor} and cap monotonicity give
$A\ge c\ge\psi_k(\Theta)\ge\psi_k(C)$.
\end{proof}

\subsection{The sharp mixed scalar}

\begin{lemma}[sharp mixed inequality]\label{lem:mixed-sharp}
Let $0\le p\le\bar t_k$ and $0\le t\le1$.  Then
\begin{equation}\label{eq:mixed-sharp}
 \delta_k\bigl((1-t)p-(k-2)\psi_k(t)\delta_k(p)\bigr)
 \ge(1-t)^k\delta_k(p).
\end{equation}
\end{lemma}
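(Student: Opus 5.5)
Write $u=1-t$, $\delta=\delta_k(p)$ and $X:=(1-t)p-(k-2)\psi_k(t)\delta$, so the target is $\delta_k(X)\ge u^k\delta$. First I dispose of degenerate cases. At $t=0$ we have $X=p$ and equality holds. At $t=1$ the right side vanishes while $X=0$, so equality holds again. If $\delta=0$ (i.e.\ $p=0$) the claim is trivial. Note that $X$ can become negative for large $t$. I will check that $\delta_k$ extended by its formula is still $\ge0$ for $X\le0$ near $0$: indeed $\delta_k(x)=(kx-1+(1-x)^k)/(k-1)$ is convex with a minimum $0$ at $x=0$. So negative $X$ is harmless whenever the right side is small, but it must still be handled. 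I would treat the inequality as a statement in $t$ for fixed $p$ and aim for a one-variable monotonicity or convexity argument.

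The key structural input is convexity of $\delta_k$ together with $\delta_k(0)=0$. The tangent line at $p$ gives
\[
\delta_k(X)\ge\delta_k(p)+\delta_k'(p)(X-p)=\delta-\delta_k'(p)\bigl(tp+(k-2)\psi_k(t)\delta\bigr).
\]
Alternatively, the star-shapedness $\delta_k(\theta p)\le\theta\delta_k(p)$ runs the wrong way. I would therefore rather use the superlinear comparison $\delta_k(\theta y)\ge\theta^2\delta_k(y)\cdot h(\theta y)/h(y)$, via the function $h=\delta_k/t^2$ of \eqref{eq:h-def}, which is decreasing. Writing $X=up\,(1-\epsilon)$ with $\epsilon=(k-2)\psi_k(t)\delta/(up)$, we get $\delta_k(X)=X^2h(X)\ge X^2h(p)=(1-\epsilon)^2u^2\delta$. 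This reduces the claim to the purely scalar inequality
\[
(1-\epsilon)^2\ge u^{k-2},\qquad\text{i.e.}\qquad \frac{(k-2)\psi_k(t)\,\delta_k(p)}{(1-t)p}\le 1-(1-t)^{(k-2)/2}.
\]
The left side increases in $p$ because $\delta_k(p)/p$ increases, so it suffices to take $p=\bar t_k$ and use \eqref{eq:cap-root}.

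At $p=\bar t_k$ the cap identity gives $\delta_k(\bar t_k)=\bar t_k-\psi_k(\bar t_k)$ with $(k-2)\psi_k(\bar t_k)=1-2\bar t_k$. The remaining statement is a one-variable inequality in $t$, with $k$ entering through $T=\bar t_k$ and $\psi_k(t)=(u-u^k)/(k-1)$:
\[
(k-2)\frac{u-u^k}{k-1}\cdot\frac{T-\psi_k(T)}{T}\le u\bigl(1-u^{(k-2)/2}\bigr).
\]
Near $u=1$ both sides vanish linearly. Since $T\le t_k^*$ is small (of order $\log k/k$), the factor $(T-\psi_k(T))/T\approx kT/2$ is bounded. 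I would verify the inequality by dividing by $u$, setting $w=u^{(k-2)/2}$, and comparing $(1-w^2 u)$ against $1-w$, using the bound on $kT$ from \eqref{eq:cap-increasing}.

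I expect this last step to be the main obstacle. The crude bound $h(X)\ge h(p)$ may lose too much, and the reduction may simply be false for large $k$ and $u$ near $1$, where the first-order slopes must be compared exactly. If it fails, I would fall back to using $h(X)\ge h(up)$ and Lemma~\ref{lem:h-multiplicative}, or to a direct derivative-in-$t$ argument that starts from equality at $t=0$. In either fallback the crux remains an explicit scalar estimate at $p=T$, which I would certify with Bernoulli-type bounds, as in Lemma~\ref{lem:L1ss}.
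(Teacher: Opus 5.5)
Your reduction $\delta_k(X)\ge X^2h(p)$ is where the plan breaks. It discards the first-order term in $t$, and the scalar inequality you reduce to is false on the admissible range, not merely hard to prove.

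Since $\psi_k'(0)=1$, you have $\epsilon=(k-2)t\,\delta_k(p)/p+O(t^2)$. So $(1-\epsilon)^2\ge(1-t)^{k-2}$ for small $t>0$ forces $\delta_k(p)\le p/2$. Near the cap this fails: the cap equation gives $\delta_k(\bar t_k)/\bar t_k=1-(1-2\bar t_k)/((k-2)\bar t_k)$, which is about $0.543$ at $k=4$ and tends to $1$ as $k\to\infty$. Your estimate $\delta_k(T)/T\approx kT/2$ assumes $kT$ is small, whereas $k\bar t_k\sim\log k$. Concretely, for $k=4$, $p=\bar t_4\approx0.3431$, $t=0.01$, your lower bound $X^2h(p)\approx0.1786$ lies below $(1-t)^4\delta_4(p)\approx0.1789$, while $\delta_4(X)\approx0.1795$ does satisfy \eqref{eq:mixed-sharp}. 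So the failure is not confined to large $k$.

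Your fallback $h(X)\ge h(up)$ has the same defect in milder form. It replaces $\delta_k'(up)$ by $2\delta_k(up)/(up)$ in the linear term; the inequality $\delta_k'(x)\le2\delta_k(x)/x$ is exactly the monotonicity of $h$, so this is a genuine loss. The first-order condition at $t=0$ becomes $p\delta_k'(p)/\delta_k(p)+2(k-2)\delta_k(p)/p\le k$, instead of the true $p\delta_k'(p)/\delta_k(p)+(k-2)\delta_k'(p)\le k$. At $p=\bar t_k$ the former left side is already about $5.05$ for $k=5$ and about $2k$ for large $k$. Once you have passed to $X^2h(up)$ you are below $(1-t)^k\delta_k(p)$, and no later estimate can recover, Lemma~\ref{lem:h-multiplicative} included. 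Incidentally, $X$ is never negative: $(k-2)\psi_k(t)\le\frac{k-2}{k-1}(1-t)$ and $\delta_k(p)\le p$ give $X\ge(1-t)p/(k-1)$.

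What is missing is a lower bound for $\delta_k(X)$ that is exact to first order, plus a separate mechanism for large $t$.

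For $t\le\bar t_k$, the paper takes the tangent of the convex function $\delta_k$ at $bp$, $b=1-t$. This reduces the claim to $\delta_k(bp)-b^k\delta_k(p)\ge(k-2)\psi_k(t)\delta_k(p)\delta_k'(bp)$. The scaling reserve on the left is then quantified through:
\begin{itemize}
\item the integral formula $\delta_k(x)=k\int_0^x(x-r)(1-r)^{k-2}\,\mathrm dr$;
\item a binomial expansion of $(1-br)^{k-2}$;
\item the moment bound $\E[r]\ge p/(3+(k-2)p)$;
\item Bernoulli's inequality.
\end{itemize}
This gives a two-variable inequality $\Phi(b,p)\ge1$ that decreases in $p$, so, as in your plan, $p=\bar t_k$ is extremal. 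A log-convexity and endpoint analysis in $b\in[1-\bar t_k,1]$, with exact rational cap brackets for small $k$, finishes the case.

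For $t\ge\bar t_k$ there is no scalar estimate at all. Take the blocks $(p,\psi_k(p),\ldots,\psi_k(p))$ and $(\mathcal H_k(t),\psi_k(t),\ldots,\psi_k(t))$, with zero atoms $\mathcal H_k(p)$ and $t$. Both are K-type members, by the cap, by $t\ge\bar t_k$, and by Lemma~\ref{lem:sharp-family}. Their aligned pairing is exactly $X$ and exceeds the anti-aligned one by exactly $(1-t)^k\delta_k(p)$, so Theorem~\ref{thm:KK-core} is literally \eqref{eq:mixed-sharp}. This is the only place Lemma~\ref{lem:h-multiplicative} enters, and there it works only together with the K-rank inequality.
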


\begin{proof}
When $t\le\bar t_k$, the analytic proof is given in
Appendix~\ref{app:mixed-sharp}.  For $t\ge\bar t_k$, compare the two sharp
blocks
\[
 (p,\psi_k(p),\ldots,\psi_k(p)),\qquad
 (\psi_k(t)+(1-t)^k,\psi_k(t),\ldots,\psi_k(t)).
\]
Their zero atoms are respectively
$1-p-(n-1)\psi_k(p)=\mathcal H_k(p)$ and $t$; as atom multisets, the two
laws are exactly the sharp profiles $D_k(1-p)$ and $D_k(1-t)$ of
Section~\ref{subsec:sharp}.  The first block is K-type by the cap; the
second is K-type by Lemma~\ref{lem:sharp-family}.  Its aligned and
anti-aligned pairings differ by exactly $(1-t)^k\delta_k(p)$, so
Theorem~\ref{thm:KK-core} is precisely \eqref{eq:mixed-sharp}.
\end{proof}

\subsection{\texorpdfstring{K$\times$D}{K x D}}

Let the K input have largest nonzero atom $p=m+U$, residual total $R$, and
let the D input have global floor $\ell$, residual maximum $V$, residual total
$S$, and nonzero mass
\[
 s=n\ell+S.
\]
The D cap is $S\le s^k$.  As in \eqref{eq:KK-floor},
\begin{equation}\label{eq:KD-floor}
 A\ge c,\qquad C\le c+E,\qquad
 c=nm\ell+mS+\ell R,\qquad E=\min(RV,SU).
\end{equation}
Put $\Theta=c+E$.  Since $E\le SU$,
\begin{equation}\label{eq:KD-cap}
 \Theta\le \ell(nm+R)+pS\le p(n\ell+S)=ps\le p\le\bar t_k.
\end{equation}
It remains to prove $\delta_k(\Theta)\ge E$.

If $E=0$ there is nothing to prove.  Otherwise $U,V>0$; define the
effective rank and the two removals
\begin{equation}\label{eq:KD-parameters}
 \lambda=\frac E{UV},\qquad
 \Gamma=R-\lambda U,\qquad\Gamma'=S-\lambda V.
\end{equation}
Then $\lambda\ge1$ and $\Gamma,\Gamma'\ge0$.  Remove only $\Gamma$ from
the first residual total and set
\begin{equation}\label{eq:KD-T0}
 \Theta_0=nm\ell+mS+\ell\lambda U+E=ms+\lambda U(\ell+V),\qquad
 \Theta=\Theta_0+\ell\Gamma.
\end{equation}
Let
\begin{equation}\label{eq:KD-sharp-factor}
 s_0=S^{1/k},\qquad
 \eta=\psi_k(1-s_0)=\frac{s_0-S}{n}.
\end{equation}
Since $S\le s^k$, $s_0\le s$ and hence $\ell=(s-S)/n\ge\eta$.
Set
\begin{equation}\label{eq:KD-comparison-points}
 x_U=ps_0-(n-1)\eta U,\qquad
 x_\delta=ps_0-(n-1)\eta\,\delta_k(p).
\end{equation}
K membership gives $U\le \delta_k(p)$, so $x_U\ge x_\delta$.  All of
$\Theta_0,x_U,x_\delta,\Theta$ lie in $[0,p]$.  Exact algebra, using
$s_0=S+n\eta$, gives
\begin{equation}\label{eq:KD-cancellation}
 \Theta_0-x_U=mn(\ell-\eta)+U(\lambda \ell-\eta-\Gamma')\ge-U\Gamma'.
\end{equation}
On the cap, $\delta_k$ is increasing and $1$-Lipschitz.  Therefore
Lemma~\ref{lem:mixed-sharp}, applied at $t=1-s_0$ so that $1-t=s_0$,
$\psi_k(t)=\eta$, and $(1-t)^k=S$, yields
\begin{align*}
 \delta_k(\Theta)&\ge\delta_k(\Theta_0)
 \ge\delta_k(x_U)-U\Gamma'
 \ge\delta_k(x_\delta)-U\Gamma'\\
 &\ge S\,\delta_k(p)-U\Gamma'\ge SU-U\Gamma'=\lambda UV=E.
\end{align*}
We have proved the mixed core.

\begin{theorem}[K-core]\label{thm:K-core}
If at least one of two members of $\body_k$ is K-type, then
$A\ge\psi_k(C)$ and $C\le\bar t_k$.
\end{theorem}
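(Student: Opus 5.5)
The theorem is an assembly of the two cases already treated. By symmetry of the roles of $\mu$ and $\nu$ in $A$, $C$ and $B$ (the pairings are symmetric under swapping the two sorted blocks), it suffices to consider K$\times$K and K$\times$D with the K input first. The D$\times$K case is the same statement with the factors swapped, since the convolution is not assumed commutative but $A$ and $C$ depend only on the sorted blocks.

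For K$\times$K, Theorem~\ref{thm:KK-core} already gives $A\ge\psi_k(C)$. The cap follows from the bound $C\le\Theta\le\min(p,v)\le\bar t_k$ recorded before it. Here $p$ is the second atom of a K-type member, so Lemma~\ref{lem:member-cap} applies.

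For K$\times$D, we use the chain established just above. It gives $A\ge c$ and $C\le\Theta$ from \eqref{eq:KD-floor}, and $\Theta\le ps\le p\le\bar t_k$ from \eqref{eq:KD-cap}. It also gives $\delta_k(\Theta)\ge E=\Theta-c$, which rewrites as $c\ge\psi_k(\Theta)$. Since $C\le\Theta\le\bar t_k$ and $\bar t_k\le t_k^*$ by \eqref{eq:cap-increasing}, $\psi_k$ is nondecreasing on $[0,\Theta]$. Hence $A\ge c\ge\psi_k(\Theta)\ge\psi_k(C)$, and $C\le\bar t_k$ as well.

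The one point needing care, and the expected obstacle, is the degenerate subcases excluded inside those arguments: $U=0$ or $V=0$, equivalently $E=0$. When $E=0$ we have $\Theta=c$, and we need $c\ge\psi_k(c)$, which holds because $\psi_k(t)\le t$. The cap $\Theta\le\bar t_k$ is unaffected, since it was derived without assuming $E>0$. We also check that the K input's $p$ is indeed at most $\bar t_k$. This holds because, for a K-type law, $p$ is the full second atom, and Lemma~\ref{lem:member-cap} applies.
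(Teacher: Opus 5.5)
Your proposal is correct and follows the paper's own proof: Theorem~\ref{thm:KK-core} for K$\times$K; for K$\times$D, the chain \eqref{eq:KD-floor}--\eqref{eq:KD-cap} with $\delta_k(\Theta)\ge E\iff c\ge\psi_k(\Theta)$ followed by cap monotonicity; and symmetry of $A$ and $C$ for D$\times$K. Your bound $\Theta\le\min(p,v)$ is sharper than the $\max(p,v)$ the paper records, but it is valid, because $c+SU\le p$ and $c+RV\le v$ hold unconditionally.
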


\begin{proof}
The K$\times$K case is Theorem~\ref{thm:KK-core}.  The K$\times$D case
follows from \eqref{eq:KD-floor}, \eqref{eq:KD-cap}, and the preceding
calculation; D$\times$K is symmetric.  The cap on $C$ was established
along the way in both cases: $C\le\Theta$, with $\Theta\le\max(p,v)\le
\bar t_k$ in the K$\times$K case (display after \eqref{eq:KK-floor};
$p$ and $v$ are second atoms of members) and $\Theta\le p\le\bar t_k$
in the mixed case \eqref{eq:KD-cap}.
\end{proof}

\section{The D-core}\label{sec:D-core}

Assume both inputs are D-type.  Let $t,u$ be their full second atoms and
$m,\ell$ their global floors.  Put
\begin{equation}\label{eq:D-parameters}
 \alpha=t-m,\qquad\beta=u-\ell,\qquad
 P=1-t-nm,\qquad Q=1-u-n\ell.
\end{equation}
Sorting and total mass give, by the estimate in the proof of
Lemma~\ref{lem:member-cap} without its membership substitution,
\begin{equation}\label{eq:D-signs}
 P-\alpha=1-2t-(k-2)m\ge0,\qquad
 Q-\beta=1-2u-(k-2)\ell\ge0,
\end{equation}
and $\alpha,\beta\ge0$; in particular $0\le\alpha\le P$ and
$0\le\beta\le Q$.
For the residual blocks in \eqref{eq:floor-residual-one},
\[
 R\ge P,\quad \hat x_1\le P,\quad \hat x_2\le\alpha,\qquad
 S\ge Q,\quad \hat y_1\le Q,\quad \hat y_2\le\beta.
\]
The runner-up residual pairing consequently satisfies
\begin{equation}\label{eq:D-runner}
 B-c\le\min(E_1,E_2),
\end{equation}
where
\begin{equation}\label{eq:D-E12}
 E_1=R\beta+\alpha(Q-\beta),\qquad
 E_2=S\alpha+\beta(P-\alpha).
\end{equation}
Indeed,
$\hat x_1\hat y_2+\hat x_2\hat y_1+\sum_{i\ge3}\hat x_i\hat y_i
\le\hat x_1\beta+\hat x_2Q+\beta(R-\hat x_1-\hat x_2)
=R\beta+\hat x_2(Q-\beta)\le E_1$,
using $\hat y_i\le\hat y_2\le\beta$ for $i\ge3$, then $\hat x_2\le\alpha$
and $Q\ge\beta$ from \eqref{eq:D-signs}; the second formula is the
symmetric estimate.

Write
\begin{equation}\label{eq:D-excess}
 R=P+\rho,\qquad S=Q+\sigma,\qquad
 E_0=P\beta+\alpha Q-\alpha\beta,\qquad
 c_0=nm\ell+mQ+\ell P.
\end{equation}
By \eqref{eq:D-signs}, all of $E_0$, $c_0$, and $H_0:=c_0+E_0$ are
nonnegative.  Moreover $E_1=E_0+\beta\rho$, $E_2=E_0+\alpha\sigma$, and
$c=c_0+m\sigma+\ell\rho$.  For $\alpha,\beta>0$ put
\begin{equation}\label{eq:D-tau}
 \tau=\min(\beta\rho,\alpha\sigma),\qquad
 \zeta=\frac{\ell}{\beta}+\frac{m}{\alpha}.
\end{equation}
Since $\rho\ge\tau/\beta$ and $\sigma\ge\tau/\alpha$, the runner-up
reduction \eqref{eq:D-runner} becomes
\begin{equation}\label{eq:D-pair}
 B-c\le E_0+\tau,\qquad c\ge c_0+\zeta\tau.
\end{equation}
If $\alpha=0$ or $\beta=0$, then $\min(E_1,E_2)=E_0$ and $c\ge c_0$, so
\eqref{eq:D-pair} holds with $\tau=0$ and $\zeta$ is not needed.  The
pair $(B,c)$ is controlled through the balanced one-parameter path
\begin{equation}\label{eq:D-F}
 F(\tau'):=c_0+\zeta\tau'-
 \psi_k\bigl(H_0+(\zeta+1)\tau'\bigr).
\end{equation}

\begin{lemma}[path transfer]\label{lem:D-transfer}
Let $\zeta,\tau\ge0$ and $0\le H_0\le\bar t_k$, and suppose
$F(\tau')\ge0$ for every $0\le\tau'\le\tau$ with path argument
$H_0+(\zeta+1)\tau'\le1$.  If $0\le B\le1$ and
\eqref{eq:D-pair} holds, then $\psi_k(B)\le c$.
\end{lemma}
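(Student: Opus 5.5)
We need $\psi_k(B)\le c$. From \eqref{eq:D-pair} we have $B\le c+E_0+\tau$ and $c\ge c_0+\zeta\tau$. The argument treats the excess $c-(c_0+\zeta\tau)$ as slack that we trade against $\psi_k$ using the slope bound $\psi_k'\le1$, and then lands on the path point $\tau'=\tau$.

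\emph{Step 1 (reduce to a single parameter).} Write $c=c_0+\zeta\tau+\epsilon$ with $\epsilon\ge0$. Then
\[
 B\le c_0+E_0+\tau+\zeta\tau+\epsilon=H_0+(\zeta+1)\tau+\epsilon .
\]
Put $X:=H_0+(\zeta+1)\tau$. Since $B\le1$, we may replace the upper bound by $B^+:=\min(X+\epsilon,1)$; trivially $B\le B^+$.

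\emph{Step 2 (monotonicity).} I do not need $\psi_k$ to be monotone at $B$. It suffices that $\psi_k(B)\le\psi_k(B^+)$ or handle directly. Here I use that $\psi_k$ is increasing on $[0,t_k^*]$ and decreasing afterwards.
\begin{itemize}
\item If $B\le t_k^*$, then $\psi_k(B)\le\psi_k(\min(B^+,t_k^*))$.
\item If $B>t_k^*$, then $\psi_k(B)\le\psi_k(t_k^*)$ is not enough directly. Instead use $\psi_k(B)\le\psi_k(y)$ for any $y\in[t_k^*,B]$.
\end{itemize}

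A cleaner route is to go through the path directly. Let $\tau^\dagger$ be the largest $\tau'\le\tau$ with $H_0+(\zeta+1)\tau'\le1$. This point exists because $H_0\le\bar t_k\le1$.

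\emph{Case 1: $X\le1$.} Then $F(\tau)\ge0$ gives $\psi_k(X)\le c_0+\zeta\tau\le c-\epsilon$. Since $|\psi_k'|\le1$ on $[0,1]$ (shown in Proposition~\ref{prop:B-interior}) and $|B-X|$ need not be controlled from below, I argue by splitting on the sign of $B-X$.
\begin{itemize}
\item If $B\le X$: concavity of $\psi_k$ with $\psi_k(0)=0$ together with $X\le\bar t_k$ would give monotonicity. But $X$ may exceed the cap. So instead use $\psi_k(B)\le\max_{[0,X]}\psi_k$. To keep this in range, I apply $F\ge0$ at the path point where the argument equals $\min(B,X)$ if that point is reachable, i.e. at $\tau'=(\min(B,X)-H_0)/(\zeta+1)$ when $\min(B,X)\ge H_0$. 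This gives $\psi_k(B)\le c_0+\zeta\tau'\le c$. If $B<H_0\le\bar t_k$, monotonicity on the increasing branch gives $\psi_k(B)\le\psi_k(H_0)=\psi_k(H_0)-F(0)+F(0)\le c_0\le c$, using $F(0)\ge0$.
\item If $B>X$: then $B\le X+\epsilon$, and by the 1-Lipschitz bound $\psi_k(B)\le\psi_k(X)+\epsilon\le c$.
\end{itemize}

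\emph{Case 2: $X>1$.} Take $\tau^\dagger$ with argument exactly $1$, so $F(\tau^\dagger)\ge0$ gives $\psi_k(1)=0\le c$. Any $B\in[H_0,1]$ is the argument of some $\tau'\le\tau^\dagger\le\tau$, so $F(\tau')\ge0$ yields $\psi_k(B)\le c_0+\zeta\tau'\le c$. The case $B<H_0$ is as in Case 1.

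So the unifying argument is: every $B\in[H_0,\min(X,1)]$ is hit by the path, which gives $\psi_k(B)\le c_0+\zeta\tau\le c$. The case $B<H_0$ is handled by the increasing branch up to $H_0\le\bar t_k\le t_k^*$. The remaining case $X<B\le X+\epsilon$ is handled by the Lipschitz bound.

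\emph{Main obstacle.} The delicate point is the range $B>X$. There the path does not reach $B$, so the bound has to come from the slack $\epsilon$ together with $|\psi_k'|\le1$ (indeed $\psi_k'\le1/(k-1)\cdot(k-1)=1$). This step needs no cap on $B$, which is why the lemma only assumes $B\le1$.
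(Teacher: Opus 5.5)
Your final argument is correct and is essentially the paper's proof: it uses the same three cases, namely $B<H_0$ (increasing branch plus $F(0)\ge0$), $H_0\le B\le H_0+(\zeta+1)\tau$ (evaluate $F$ at the path point that hits $B$), and $B>H_0+(\zeta+1)\tau$, where your slope bound $\psi_k'\le1$ is exactly the paper's use of $\delta_k$ being nondecreasing. Your separate treatment of $X>1$ is redundant, since $B>X$ already forces $X<B\le1$, and the abandoned remarks in Step~2 can simply be dropped.
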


\begin{proof}
The proof only evaluates $F$ at path arguments that are at most $B\le1$,
so the hypothesis always applies.  If $B\le H_0$, then monotonicity of
$\psi_k$ on the increasing branch
(\eqref{eq:cap-increasing}), $F(0)\ge0$, and \eqref{eq:D-pair} give
$\psi_k(B)\le\psi_k(H_0)\le c_0\le c$.  If $H_0<B\le H_0+(\zeta+1)\tau$,
write $B=H_0+(\zeta+1)\tau'$ with $0<\tau'\le\tau$; then
$\psi_k(B)\le c_0+\zeta\tau'\le c_0+\zeta\tau\le c$.  If
$B>H_0+(\zeta+1)\tau$, read $F(\tau)\ge0$ as
$\delta_k\bigl(H_0+(\zeta+1)\tau\bigr)\ge E_0+\tau$; since $\delta_k$ is
nondecreasing, $\delta_k(B)\ge E_0+\tau\ge B-c$, which is the claim.
\end{proof}

The next two subsections verify the hypotheses: $F(0)\ge0$ and
$H_0\le\bar t_k$ at the boundary, and $F(\tau')\ge0$ for $\tau'>0$ with
path argument in $[0,1]$ --- where $\psi_k$ is concave --- from the
stationary reserve.

\subsection{The boundary \texorpdfstring{$\tau'=0$}{tau' = 0}}

For fixed $t,u$, the boundary margin $c_0-\psi_k(H_0)$ increases with
each actual floor.  Indeed $\partial c_0/\partial m=Q$ and
$\partial H_0/\partial m=-(k-2)\beta$, so differentiation in $m$ gives
\[
 Q+(k-2)\beta\psi_k'(H_0)
 \ge Q-\frac{k-2}{k-1}\beta\ge\frac\beta{k-1},
\]
by $\psi_k'\ge-1/(k-1)$ and \eqref{eq:D-signs}; the other derivative is
symmetric.  Hence set
\begin{equation}\label{eq:D-floor-equality}
 m=\psi_k(t),\qquad \ell=\psi_k(u).
\end{equation}
This substitution can only raise $H_0$, since
$\partial H_0/\partial m=-(k-2)\beta\le0$ and
$\partial H_0/\partial\ell=-(k-2)\alpha\le0$; the cap
$H_0\le\bar t_k$ proved below therefore also covers the actual floors.
With $\pi=t+u-tu$, direct algebra gives
\begin{align}
 c_0&=\psi_k(\pi),\label{eq:D-c0}\\
 E_0&=(1-t)^k\delta_k(u)+\delta_k(t)(1-u)^k
      -\delta_k(t)\delta_k(u).\label{eq:D-E0}
\end{align}
Superadditivity of $\delta_k$ (convexity with $\delta_k(0)=0$) and the
scaling bound $\delta_k(\vartheta x)\ge\vartheta^k\delta_k(x)$ for
$0\le\vartheta\le1$ give, as $(1-u)^k\le1$,
\[
 E_0\le\delta_k(t)+(1-t)^k\delta_k(u)
 \le\delta_k(t)+\delta_k\bigl((1-t)u\bigr)\le\delta_k(\pi),
\]
hence $H_0=\psi_k(\pi)+E_0\le\pi$.  The scaling bound holds because
$\delta_k(x)/x^k$ is nonincreasing:
$k\delta_k(x)-x\delta_k'(x)
=\tfrac k{k-1}\bigl((1-x)^{k-1}-1+(k-1)x\bigr)\ge0$
by Bernoulli's inequality.
Moreover the exact factorization
\begin{equation}\label{eq:D-H-factor}
 \mathcal H_k(\pi)-H_0
 =\bigl((1-t)^k-\delta_k(t)\bigr)
  \bigl((1-u)^k-\delta_k(u)\bigr)\ge0
\end{equation}
gives $H_0\le\mathcal H_k(\pi)$; both factors are nonnegative, since by
\eqref{eq:psi-identities}
$(1-t)^k-\delta_k(t)=1-2t-(k-2)\psi_k(t)\ge1-2t-(k-2)m\ge0$
by \eqref{eq:D-signs}, and symmetrically in $u$.
If $\mathcal H_k(\pi)\ge\pi$, the
inequality is precisely the cap condition for $\pi$, and monotonicity
yields $\psi_k(H_0)\le\psi_k(\pi)=c_0$.  If $\mathcal H_k(\pi)\le\pi$,
apply Lemma~\ref{lem:sharp-family} and the cap for $\mathcal H_k(\pi)$
to reach the same conclusion.  In particular
$H_0\le\min\bigl(\pi,\mathcal H_k(\pi)\bigr)\le\bar t_k$: if
$\pi\le\bar t_k$ this is clear, and otherwise
$\mathcal H_k(\pi)\le\mathcal H_k(\bar t_k)=\bar t_k$, since
$\mathcal H_k$ is decreasing ($\psi_k'\ge-1/(k-1)$) and fixes
$\bar t_k$ by \eqref{eq:cap-root}.  Thus
\begin{equation}\label{eq:D-F0}
 F(0)=c_0-\psi_k(H_0)\ge0.
\end{equation}

\subsection{The stationary reserve}

Since $\psi_k$ is concave, $F'$ is increasing.  At a stationary point with
argument $H_*$,
\[
 \psi_k'(H_*)=\frac \zeta{\zeta+1},\qquad
 \delta_k'(H_*)=\frac1{\zeta+1}.
\]
The explicit derivative formula \eqref{eq:psi-derivatives} then gives
\begin{equation}\label{eq:D-stationary-psi}
 \psi_k(H_*)=\frac{1-H_*}{k(\zeta+1)}\le\frac1{k(\zeta+1)}.
\end{equation}
If $H_*\le H_0$, $F$ is increasing and \eqref{eq:D-F0} suffices.
Otherwise convexity and \eqref{eq:D-stationary-psi} give
\[
 F(\tau')\ge c_0-\psi_k(H_*)
 \ge c_0-\frac1{k(\zeta+1)}.
\]
Nonnegativity of $F$ at every path argument in $[0,1]$ --- all that
Lemma~\ref{lem:D-transfer} requires --- therefore follows from
\begin{equation}\label{eq:D-reserve}
 k(\zeta+1)c_0\ge1.
\end{equation}

\begin{lemma}[D-reserve]\label{lem:D-reserve}
Under \eqref{eq:D-floor-equality}, the reserve
\eqref{eq:D-reserve} holds for every integer $k\ge4$.
\end{lemma}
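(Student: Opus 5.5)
The plan is to substitute \eqref{eq:D-floor-equality} into the reserve and reduce it to a two-variable scalar inequality. With $m=\psi_k(t)$ and $\ell=\psi_k(u)$ we have $\alpha=\delta_k(t)$, $\beta=\delta_k(u)$ and $c_0=\psi_k(\pi)$ by \eqref{eq:D-c0}. Put
\[
 g(x):=\frac{\psi_k(x)}{\delta_k(x)}=\frac{\varphi(x)}{1-\varphi(x)},\qquad \varphi(x)=\psi_k(x)/x .
\]
Then $\zeta=g(t)+g(u)$, and \eqref{eq:D-reserve} becomes
\begin{equation*}
 k\bigl(1+g(t)+g(u)\bigr)\,\psi_k(t+u-tu)\ \ge\ 1 .
\end{equation*}
The domain is $0\le t,u\le\bar t_k$. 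This follows from \eqref{eq:D-signs} with $m=\psi_k(t)$, which is exactly $2t+(k-2)\psi_k(t)\le1$, together with \eqref{eq:cap-root}. Since $\varphi$ is decreasing, $g$ is decreasing, with $g(x)\sim 2/(kx)\to\infty$ as $x\to0$. The case $\alpha=0$ or $\beta=0$ is excluded (there $\zeta$ is not needed), so $t,u>0$.

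The first step disposes of small arguments. Since $\psi_k$ is concave, vanishes at $0$, and $\pi\ge\max(t,u)$, we have $\psi_k(\pi)\ge\psi_k(\max(t,u))$ as long as $\pi\le t_k^*$. Beyond $t_k^*$ I would use the concavity chord from $\max(t,u)$ to $\pi\le 1-(1-\bar t_k)^2$. Using only the term $k\,g(t)\psi_k(t)$, it suffices to show
\[
 k\,\psi_k(t)^2\ge\delta_k(t)\quad\text{for all } t\in[0,\bar t_k],
\]
that is, $k\varphi(t)^2\ge 1-\varphi(t)$, which holds at $t=0$ with ratio $2$. It then remains to check this at the cap. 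There the identity $\psi_k(\bar t_k)=(1-2\bar t_k)/(k-2)$ from \eqref{eq:cap-root} makes everything explicit in $\bar t_k$. If this single-variable bound fails near the cap, I keep the full factor $1+g(t)+g(u)$ and the gain $\pi>t$ instead.

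The second step treats the general pair by monotonicity. For fixed $u$, I would show that the left side is minimized at an endpoint of $t\in[0,\bar t_k]$ or on the diagonal $t=u$. The factor $1+g(t)+g(u)$ decreases in $t$, while $\psi_k(\pi)$ increases in $t$ as long as $\pi\le t_k^*$. Comparing logarithmic derivatives, via \eqref{eq:psi-derivatives} and $\partial\pi/\partial t=1-u$, should reduce the claim to the corner $t=u=\bar t_k$ and the diagonal. The corner evaluation reduces to
\[
 k\bigl(1+2g(\bar t_k)\bigr)\,\psi_k\bigl(1-(1-\bar t_k)^2\bigr)\ge1 .
\]
I would prove this using $\bar t_k\le t_k^*$ from \eqref{eq:cap-increasing} and an explicit two-sided bound on $\bar t_k$ of the form $\bar t_k\approx(\log k)/k$. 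The integer restriction then enters: the range $4\le k\le K_0$ is checked by certified interval evaluation, and $k>K_0$ by asymptotic estimates using $(1-x)^k\le e^{-kx}$.

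The main obstacle is the regime where both $t$ and $u$ are near $\bar t_k$. There $g$ is only moderately large, roughly of constant order times $1/\log k$, and $\pi$ may pass $t_k^*$, so $\psi_k(\pi)$ is no longer monotone and the margin is thinnest. For this regime I would first try to bound $\psi_k(\pi)$ from below by the secant of $\psi_k$ through $\bar t_k$ and $1$. This uses \eqref{eq:psi-identities} in the form $\psi_k(1-v)=(v-v^k)/(k-1)$ with $v=(1-t)(1-u)$, which makes $c_0=(v-v^k)/n$ exactly and turns the corner into an inequality in $v\ge(1-\bar t_k)^2$ alone.
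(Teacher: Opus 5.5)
Your reformulation is correct: under floor equality the reserve reads $k\bigl(1+g(t)+g(u)\bigr)\psi_k(t+u-tu)\ge1$ on $(0,\bar t_k]^2$. The argument breaks down after that, and there is a genuine gap.

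\textbf{The first step disposes of very little.} The inequality $k\psi_k(t)^2\ge\delta_k(t)$ is equivalent to $kt\,\varphi(t)^2\ge1-\varphi(t)$, not to $k\varphi(t)^2\ge1-\varphi(t)$; a factor $t$ was lost. In the correct form it fails at the cap for every $k\ge4$.
\begin{itemize}
\item At $k=4$: $\bar t_4\approx0.343$, and $k\psi_4(\bar t_4)^2\approx0.098$ while $\delta_4(\bar t_4)\approx0.186$.
\item Asymptotically $\psi_k(\bar t_k)\approx1/k$ and $\delta_k(\bar t_k)\approx\bar t_k\approx(\log k)/k$, so $k\psi_k^2/\delta_k\approx1/\log k$.
\item For large $k$ the bound holds only while $kt$ stays below about $1.15$, i.e.\ for $t=O(1/k)$.
\item The comparison $\psi_k(\pi)\ge\psi_k(\max(t,u))$ also fails at the corner for every $k$; at $k=4$ it reads $0.132$ versus $0.157$.
\end{itemize}
Away from tiny arguments the reserve is carried mainly by the constant $1$ in $1+g(t)+g(u)$, which this step discards. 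So your fallback ("keep the full factor") is not a patch; it is the whole problem.

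\textbf{The second step is only asserted.} For fixed $u$, the function $\bigl(1+g(t)+g(u)\bigr)\psi_k(t+u-tu)$ is a product of a convex decreasing factor and a concave factor, with no evident concavity or quasi-concavity in $t$. When $kt,ku\ll1$ it is approximately $2(t+u)^2/(tu)$. Its logarithmic $t$-derivative is then about $(t-u)/(t(t+u))>0$ for $t>u$, whereas near the cap it is negative. A comparison of logarithmic derivatives therefore cannot be a one-sign argument; you would have to prove a single sign change in order to exclude interior minima. Even granting that, the diagonal $t=u$ is a full one-parameter family for each $k$, and you only discuss its endpoint at the corner.

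\textbf{The missing idea is the paper's decoupling.} Since $1+g(u)=u/\delta_k(u)$, one has $\zeta+1=g(t)+u/\delta_k(u)\ge u/\delta_k(u)$. Taking $u=\min(t,u)$ is the better choice because $x/\delta_k(x)$ decreases. It then suffices to prove \eqref{eq:D-key-qge5}, namely $ku\,\psi_k(t+u-tu)\ge\delta_k(u)$.
\begin{itemize}
\item Here $t$ enters only through a concave function of an affine argument, so the reduction to $t=u$ and $t=\bar t_k$ is automatic.
\item The two endpoint margins become the polynomials $M_k(b)=(1-b)^2R_k(b)$ and $N_k(b)=(1-b)S_k(b)$ in $b=1-u$. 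These are closed by a sign count for $R_k$, monotonicity of $S_k$, and explicit cap estimates.
\item The decoupling is slightly lossy: at $k=4$ it is false near the cap corner. The paper therefore treats $k=4$ by a separate direct estimate using $\psi_4/\delta_4\ge4/5$, $\delta_4(u)\le2u^2$ and $\psi_4(\pi)\ge\pi(1-\pi)/2$, followed by concavity and corner evaluations.
\end{itemize}
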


\begin{proof}
Assume without loss of generality $0<u\le t\le\bar t_k$; for $u=0$ the
reserve is never invoked, since then $\beta=0$ and
Lemma~\ref{lem:D-transfer} applies with $\tau=0$.
For $k\ge5$, monotonicity of $\delta_k(x)/x$ gives
\[
 \frac1{k(\zeta+1)}\le
 \frac1k\min\left(\frac{\delta_k(t)}t,
                    \frac{\delta_k(u)}u\right)
 =\frac{\delta_k(u)}{ku}.
\]
Thus it is enough to prove
\begin{equation}\label{eq:D-key-qge5}
 ku\,\psi_k(t+u-tu)\ge\delta_k(u).
\end{equation}
For fixed $u$, the left side minus the right side is concave in $t$, so its
minimum occurs at $t=u$ or $t=\bar t_k$.  Appendix~\ref{app:D-reserve}
proves both endpoint inequalities for every $k\ge5$.

For $k=4$, \eqref{eq:D-key-qge5} is slightly false near the cap corner, but
the full reserve remains true.  The direct estimate in
Appendix~\ref{app:D-reserve} uses
\[
 \frac13\le\bar t_4\le\frac9{26},\qquad
 \frac{\psi_4(t)}{\delta_4(t)}\ge\frac45,\qquad
 \delta_4(u)\le2u^2
\]
and closes the two endpoint rectangles exactly.
\end{proof}

\begin{remark}[reserve at actual floors]\label{rem:D-reserve-floors}
The reserve \eqref{eq:D-reserve} persists for all floors
$m\ge\psi_k(t)$, $\ell\ge\psi_k(u)$.  Writing $c_0^{\mathrm{eq}}$,
$\zeta^{\mathrm{eq}}$ for the values at floor equality
\eqref{eq:D-floor-equality}, exact expansion gives
\[
 c_0-c_0^{\mathrm{eq}}
 =(1-u)^k\bigl(m-\psi_k(t)\bigr)+P\bigl(\ell-\psi_k(u)\bigr)\ge0,
\]
using $P\ge0$ from \eqref{eq:D-signs}, while
$\zeta\ge\zeta^{\mathrm{eq}}$ termwise, because $x\mapsto x/(w-x)$
increases on $[0,w)$.  Since $c_0^{\mathrm{eq}}=\psi_k(\pi)\ge0$, the
left side of \eqref{eq:D-reserve} only grows.
\end{remark}

\begin{theorem}[D-core]\label{thm:D-core}
For two D-type members in the branch $B>z$, one has
$c\ge\psi_k(B)$.
\end{theorem}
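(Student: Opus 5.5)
The proof assembles the pieces of this section through Lemma~\ref{lem:D-transfer}. That lemma needs four inputs: the pair inequality \eqref{eq:D-pair}, the cap bound $0\le H_0\le\bar t_k$, the range condition $0\le B\le1$, and nonnegativity of the path function $F$ on $[0,\tau]$ wherever its argument stays in $[0,1]$. Once these are in place, Lemma~\ref{lem:D-transfer} returns $\psi_k(B)\le c$ directly.

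First, I fix two D-type members $\mu,\nu$ and introduce $t,u,m,\ell,\alpha,\beta,P,Q$ as in \eqref{eq:D-parameters}. The sign relations \eqref{eq:D-signs} give the residual bounds, hence the runner-up reduction \eqref{eq:D-runner}. With $\rho,\sigma,\tau,\zeta$ as in \eqref{eq:D-excess}--\eqref{eq:D-tau}, this becomes \eqref{eq:D-pair}. The degenerate case $\alpha\beta=0$ is handled with $\tau=0$, as already noted. The range $0\le B\le1$ is trivial because $B$ is a sub-scalar product of subprobability vectors. The branch hypothesis $B>z$ is used only to place us in the situation where the D-core is actually needed; the argument itself does not use it beyond this.

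Second, I verify the boundary inputs at the actual floors. The computation around \eqref{eq:D-floor-equality} shows that the margin $c_0-\psi_k(H_0)$ is nondecreasing in each floor. It also shows that lowering $m,\ell$ to $\psi_k(t),\psi_k(u)$ can only raise $H_0$. So it suffices to work at floor equality. There \eqref{eq:D-c0}, \eqref{eq:D-E0} and the factorization \eqref{eq:D-H-factor} give $H_0\le\min(\pi,\mathcal H_k(\pi))\le\bar t_k$, and Lemma~\ref{lem:sharp-family} handles the case $\mathcal H_k(\pi)\le\pi$. This yields $F(0)\ge0$, i.e.\ \eqref{eq:D-F0}, and the cap at the actual floors. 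One point needs checking: the monotonicity of the margin was derived with $t,u$ fixed, and both $\alpha$ and $\beta$ change with the floors. I will re-derive it treating $E_0$ and $c_0$ as explicit functions of $(m,\ell)$, so that the derivative bound $Q+(k-2)\beta\psi_k'(H_0)\ge\beta/(k-1)$ is seen to hold along the whole segment.

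Third, I treat the path for $\tau'>0$. By concavity of $\psi_k$ on $[0,1]$, $F$ is convex there. If its stationary argument satisfies $H_*\le H_0$, then $F$ is increasing and $F(0)\ge0$ suffices. Otherwise $F\ge c_0-1/(k(\zeta+1))$ by \eqref{eq:D-stationary-psi}, which is nonnegative by the reserve \eqref{eq:D-reserve}. The reserve is supplied at floor equality by Lemma~\ref{lem:D-reserve}, and Remark~\ref{rem:D-reserve-floors} extends it to the actual floors. Feeding everything into Lemma~\ref{lem:D-transfer} gives $c\ge\psi_k(B)$.

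The main obstacle is the consistency of the floor reductions. The boundary step and the reserve step each reduce to floor equality, but they move $c_0$, $H_0$ and $\zeta$ in different directions. I must make sure that the conclusions $F(0)\ge0$ and $k(\zeta+1)c_0\ge1$ hold simultaneously for the same actual parameters. I will do this by proving both monotonicities directly at the actual $(m,\ell)$, rather than only at the substituted values.
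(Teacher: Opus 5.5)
Your proposal is correct and follows the paper's proof essentially step for step. You verify \eqref{eq:D-pair}, $0\le B\le1$, $F(0)\ge0$ and $H_0\le\bar t_k$ from the boundary subsection, and the reserve via Lemma~\ref{lem:D-reserve} with Remark~\ref{rem:D-reserve-floors}, and then apply Lemma~\ref{lem:D-transfer}; the only difference is that you bound $B$ by the trivial subprobability estimate where the paper cites Lemma~\ref{lem:mul-pairing-cap}. The consistency obstacle you flag is already settled in the paper. There $\partial c_0/\partial m=Q$ and $\partial H_0/\partial m=-(k-2)\beta$ are computed with $\alpha$ and $P$ varying in $m$, and $F(0)\ge0$ and $k(\zeta+1)c_0\ge1$ are each established separately at the actual floors, so no simultaneous reduction is needed.
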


\begin{proof}
The pair $(B,c)$ satisfies \eqref{eq:D-pair}, with $\tau=0$ when
$\alpha\beta=0$, and $0\le B\le\bar t_k\le1$ by
Lemma~\ref{lem:mul-pairing-cap}.  The boundary subsection established
$F(0)\ge0$ and $H_0\le\bar t_k$; for $\tau'>0$ with path argument in
$[0,1]$, $F(\tau')\ge0$ by the
stationary argument together with Lemma~\ref{lem:D-reserve} and
Remark~\ref{rem:D-reserve-floors}.  Lemma~\ref{lem:D-transfer} gives
$\psi_k(B)\le c$.
\end{proof}

\section{Multiplication closure and the stability theorem}
\label{sec:assembly}

\begin{theorem}[absorbing-zero quasigroup gate]\label{thm:multiplication}
For every absorbing-zero quasigroup operation on a set of order $k\ge4$,
convolution preserves $\body_k$.  In particular, multiplicative convolution
preserves $\body_k$ on every finite field of order $k$.
\end{theorem}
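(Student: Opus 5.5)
The plan is to assemble the gate from the reductions of Section~\ref{sec:mul-reduction} and the two cores. Let $\mu,\nu\in\body_k$ with product $\pi=\mu\otimes\nu$, zero atom $z=1-rs$, and nonzero block $w$. I would split according to the input types.

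\emph{Case 1: at least one input is K-type.} The product zero atom is then largest, as shown after \eqref{eq:KD-dichotomy}. So $p_2^\down(\pi)=\max_{g\ne0}w_g\le C$ and $p_k^\down(\pi)=\min_{g\ne0}w_g\ge A$ by Lemma~\ref{lem:mul-rearrangement}. Theorem~\ref{thm:K-core} gives $A\ge\psi_k(C)$ and $C\le\bar t_k$. Since $p_2^\down(\pi)\le C\le\bar t_k\le t_k^*$, monotonicity of $\psi_k$ on the increasing branch yields $\psi_k(p_2^\down(\pi))\le\psi_k(C)\le A\le p_k^\down(\pi)$.

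\emph{Case 2: both inputs are D-type.} First I bound the smallest atom. The nonzero atoms are at least $A\ge c$, and $z\ge c$ by \eqref{eq:z-ge-c}, so $p_k^\down(\pi)\ge c$; moreover $c\ge\psi_k(z)$ by \eqref{eq:DD-baseline}.

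For the second atom, note that $p_2^\down(\pi)$ is either $z$ or a nonzero atom. If the zero atom is not the strict top, the second atom is at most $\max(z,\operatorname{second}_{g\ne0}w_g)$. If $z$ is the top, the second atom is $\max_{g\ne0}w_g$, which is bounded by $z$. In every case $p_2^\down(\pi)\le\max(z,B)$, using the second-value bound $B$ from Lemma~\ref{lem:mul-rearrangement}; this bookkeeping should be checked with care.

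If $B\le z$, then $p_2^\down(\pi)\le z$, and I need $z$ on the increasing branch. Here I would argue that $p_2^\down(\pi)\le\bar t_k$ whenever the second atom is $z$ or a nonzero atom at most $B\le\bar t_k$ (Lemma~\ref{lem:mul-pairing-cap}). The subcase $p_2^\down(\pi)=z$ with $z$ not top needs $z\le\bar t_k$. I would get this from the fact that the top is at least $z$ while the other atoms are at least $c\ge\psi_k(z)$, which gives $2z+(k-2)\psi_k(z)\le1$ as in Lemma~\ref{lem:member-cap}. Then $\psi_k(p_2^\down)\le\psi_k(z)\le c\le p_k^\down$.

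If $B>z$, Theorem~\ref{thm:D-core} gives $c\ge\psi_k(B)$. With $p_2^\down\le B\le\bar t_k$, monotonicity finishes the case.

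The main obstacle is the careful ordering argument identifying $p_2^\down(\pi)$ relative to $z$ and the nonzero atoms, together with keeping every compared argument inside the cap. The substantive analysis is already contained in the cores. The field case follows since field multiplication is an absorbing-zero quasigroup operation, $\F^\times$ being a group.
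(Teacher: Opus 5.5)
Your Case~1, the floor bound $p_k^\down(\pi)\ge c\ge\psi_k(z)$, and the branch $B>z$ are sound. In that branch $p_2^\down(\pi)\le B\le\bar t_k$ and Theorem~\ref{thm:D-core} apply whether or not zero is the top.

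The gap is in the D$\times$D branch $B\le z$ when the product zero atom is the largest. There $p_2^\down(\pi)=H:=\max_{g\ne0}w_g$. This atom is bounded by $z$ and by $C$, but not by $B$, so it is not \emph{a nonzero atom at most $B$}. More importantly, you rightly say that passing from $H\le z$ to $\psi_k(H)\le\psi_k(z)$ needs $z$ on the increasing branch. But you verify $z\le\bar t_k$ only when zero is \emph{not} the top, and that is exactly the subcase where $p_2^\down(\pi)=z$ and no monotonicity is needed. When zero is the top, $z$ need not lie on the increasing branch. D-type inputs only give $a,b\le\bar t_k$, so $z=1-rs$ can approach $1-(1-\bar t_k)^2$. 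At $k=4$ this is about $0.57$, while $t_4^*\approx0.37$. Beyond $t_k^*$, the bare inequality $H\le z$ does not yield $\psi_k(H)\le\psi_k(z)$.

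The missing step is the paper's treatment of this subcase, which uses total mass and the sharp-family inequality:
\begin{itemize}
\item The $k-2$ nonzero atoms other than $H$ are each at least $c\ge\psi_k(z)$, so total mass gives $H\le1-z-(k-2)\psi_k(z)=\mathcal H_k(z)$.
\item If $\mathcal H_k(z)\ge z$, then $z\le\bar t_k$, since $\mathcal H_k$ is decreasing and fixes $\bar t_k$. Monotonicity then works.
\item If $\mathcal H_k(z)\le z$, then $z\ge\bar t_k$, hence $H\le\mathcal H_k(z)\le\mathcal H_k(\bar t_k)=\bar t_k$. Monotonicity on the cap gives $\psi_k(H)\le\psi_k(\mathcal H_k(z))$. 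Lemma~\ref{lem:sharp-family} then gives $\psi_k(\mathcal H_k(z))\le\psi_k(z)\le c\le p_k^\down(\pi)$.
\end{itemize}
Without this mass bound and the sharp-family comparison, or an equivalent argument, the zero-largest D$\times$D product is not covered by your case analysis.
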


\begin{proof}
Let $\mu,\nu\in\body_k$ and use the notation of
Section~\ref{sec:mul-reduction}.

If one input is K-type, product zero is largest.  Theorem~\ref{thm:K-core}
and Lemma~\ref{lem:mul-rearrangement} give
\[
 \min_{g\ne0}w_g\ge A\ge\psi_k(C)
 \ge\psi_k\left(\max_{g\ne0}w_g\right),
\]
where the last comparison is on the cap:
$\max_{g\ne0}w_g\le C\le\bar t_k$ by Lemma~\ref{lem:mul-rearrangement}
and Theorem~\ref{thm:K-core}.  This is membership in $\body_k$.

Suppose both inputs are D-type.  Equations \eqref{eq:DD-baseline} and
\eqref{eq:z-ge-c} give $z,A\ge c\ge\psi_k(z)$.

If zero is not largest and $B\le z$, the global second atom equals $z$:
the largest atom is nonzero, and every other nonzero atom is at most
$B\le z$ by Lemma~\ref{lem:mul-rearrangement}.  The global minimum is at
least $\min(z,A)\ge\psi_k(z)$ by \eqref{eq:DD-baseline} and
$\psi_k(z)\le z$, so this is membership with no monotonicity needed.
If zero is not largest and $B>z$, Theorem~\ref{thm:D-core}
and the runner-up bound give
\[
 \min(z,A)\ge c\ge\psi_k(B)
 \ge\psi_k\bigl(p_2^\down(\mu\otimes\nu)\bigr),
\]
where the final comparison is on the cap:
$p_2^\down(\mu\otimes\nu)\le\max(z,B)=B\le\bar t_k$ by
Lemma~\ref{lem:mul-rearrangement} and Lemma~\ref{lem:mul-pairing-cap}.

It remains to suppose zero is largest.  Every nonzero atom is at least
$\psi_k(z)$ by \eqref{eq:DD-baseline}.  Their largest atom $H$ satisfies
\[
 H\le z,\qquad H\le1-z-(k-2)\psi_k(z)=\mathcal H_k(z).
\]
If $\mathcal H_k(z)\ge z$, then $z$ is in the cap and
$\psi_k(H)\le\psi_k(z)$.  If $\mathcal H_k(z)\le z$, the same conclusion
follows from Lemma~\ref{lem:sharp-family}, together with the cap for
$\mathcal H_k(z)$.  Again the product belongs to $\body_k$.
\end{proof}

\begin{proof}[Proof of Theorem~\ref{thm:quasigroup}]
The two assertions are Theorems~\ref{thm:addition}
and~\ref{thm:multiplication}.  Applying them independently also proves the
combined statement, since no compatibility between the operations enters
either proof.
\end{proof}

\begin{proof}[Proof of Theorem~\ref{thm:body}]
Every group operation is a quasigroup operation.  Finite-field
multiplication has an absorbing zero and restricts to a group, hence a
quasigroup, on the nonzero elements.  Theorem~\ref{thm:quasigroup} therefore
specializes to the two field gates.
\end{proof}

\begin{remark}[the inequality alone is not an invariant]
\label{rem:obstruction}
The full region defined by $p_1^\down-p_k^\down\le(1-p_2^\down)^k$ need
not be closed under multiplicative convolution.  Already over $\F_5$,
the two rational laws with atom numerators
\[
 (10910,63824,9750,7809,7707),\qquad
 (10329,9177,64827,6853,8814)
\]
at the field elements $(0,1,2,3,4)$ and common denominator $100000$
each satisfy the inequality, while their product does not.
Theorem~\ref{thm:multiplication} shows that the more structured
condition of membership in $\body_k$ is preserved.
\end{remark}

\clearpage
\appendix

\section{Formalization in Lean}\label{sec:lean}

The complete development --- both field gates, their quasigroup
extensions, the read-once corollary, and the geometric results --- is
formalized in Lean~4 on top of Mathlib.  The source is available in the
companion repository
\href{https://github.com/Iluvmagick/rronce}{\texttt{Iluvmagick/rronce}}.
The Lean identifiers retain the letter $q$ for the order where the paper
writes $k$.  The geometric statements use the index $m:=q-2$ and the
standard coordinate chart of the probability simplex: a law on $q=m+2$
atoms is a vector $x\in\R^{m+1}$ together with the omitted atom
$1-\sum_ix_i$, and the normalized fraction is
$\operatorname{vol}_{m+1}(\body_q)/\operatorname{vol}_{m+1}(\Delta_{m+1})$,
exactly the normalization in \eqref{eq:B-volume-lower}.

Twenty endpoints are audited.  The principal ones are
\begin{center}
\texttt{Rronce.Bq.add\_gate\_preserves\_Bq},\qquad
\texttt{Rronce.Bq.mul\_gate\_preserves\_Bq},\\
\texttt{Rronce.Bq.quasigroup\_gates\_preserve\_Bq},\qquad
\texttt{Rronce.General.main\_membership\_qge4},\\
\texttt{Rronce.Bq.volume\_fraction\_lower\_bound},\qquad
\texttt{Rronce.Bq.exact\_volume\_formula},\\
\texttt{Rronce.Bq.exact\_volume\_root\_asymptotics}.
\end{center}
The first two are Theorem~\ref{thm:body} and the third is
Theorem~\ref{thm:quasigroup}; \texttt{main\_membership\_qge4} is
Corollary~\ref{cor:readonce}, stated on the exact rational output
probabilities; the last three are the volume bound
\eqref{eq:B-volume-lower} and the exact-volume results of
Appendix~\ref{app:exact-volume}.  The remaining endpoints record the
separate quasigroup forms, the derived inequality
$p_1^\down-p_k^\down\le(1-p_2^\down)^k$ over every finite field of
order at least two (orders two and three proved separately), the
nonvacuity statements behind Proposition~\ref{prop:B-interior}, and the
companion measurability, positivity, and logarithmic-rate facts.

Each principal endpoint is stated in a proof-free statement module,
separate from its implementation, and \texttt{AxiomCheck.lean} restates
all twenty endpoints --- against those statement modules where they
exist, and as fully spelled-out statements otherwise --- so that the
file typechecks only if the proved theorems are exactly the stated
ones.  The scalar endpoint facts used in
Appendices~\ref{app:rank}--\ref{app:D-reserve} name their certifying
modules where they are invoked.  The theorem chain contains no
\texttt{sorry}, \texttt{admit}, \texttt{native\_decide}, unsafe
declaration, or custom axiom: \texttt{\#print axioms} reports exactly
\[
 \texttt{propext},\qquad \texttt{Classical.choice},\qquad
 \texttt{Quot.sound}
\]
for every endpoint.  The verification commands are
\begin{lstlisting}[language=bash]
lake build
lake env lean AxiomCheck.lean
\end{lstlisting}
The statements of the quasigroup and geometric endpoints are reproduced
verbatim in Appendix~\ref{app:lean-statement}.
For background on Lean and Mathlib, see \cite{demoura2021lean4,mathlib2020}.

\section{The K-rank endpoint certificates}\label{app:rank}

We complete the proof of Lemma~\ref{lem:K-rank}; symbols introduced in
this appendix are local to it.  Let
\[
 y=\sqrt{k/(2\lambda)}>1,\qquad \varepsilon=1/\lambda,\qquad
 u=1/\varphi_0,\qquad \mathcal R=\xi_0/\lambda.
\]
The definitions in the proof of the lemma give
\begin{equation}\label{eq:rank-transform}
 u^2=2(y+1)+\frac{1-\varepsilon}{y-1},\qquad
 \mathcal R=1+\varepsilon+(y-1)u.
\end{equation}
After division by $\lambda$, \eqref{eq:rank-boundary-target} becomes
\begin{equation}\label{eq:rank-E-target}
 \mathcal R\left(1-\frac{\varepsilon}{\mathcal R}\right)^{2y^2/\varepsilon}
 <1-\frac1u.
\end{equation}
The domain is $0<\varepsilon\le\min(1,y^2/2)$ and $u\ge2$.

For fixed $y$, the left side of \eqref{eq:rank-E-target} increases in
$\varepsilon$ and the right side decreases.  Indeed, put
$z=\varepsilon/\mathcal R$.  From
\eqref{eq:rank-transform},
$\mathcal R'=1-1/(2u)\ge3/4$ and $0<z\le1/2$.  Differentiation gives
\[
 \frac{\mathrm d}{\mathrm d \varepsilon}\log\left[\mathcal R
   \left(1-\frac \varepsilon{\mathcal R}\right)^{2y^2/\varepsilon}\right]
 =\frac{\mathcal R'}{\mathcal R}
 -\frac{2y^2}{\varepsilon^2}
 \left(\log(1-z)+\frac{z(1-z\mathcal R')}{1-z}\right)>0,
\]
where $-\log(1-z)\ge z+z^2/2$ and
$(1-\mathcal R')/(1-z)\le1/2$.  Also $u$ decreases with $\varepsilon$.
It is therefore enough to take
\begin{equation}\label{eq:rank-Emax}
 \varepsilon=\min(1,y^2/2).
\end{equation}

If $y\ge\sqrt2$, then $\varepsilon=1$, and \eqref{eq:rank-transform} gives
$u=\sqrt{2(y+1)}$ together with
\[
 k'=2y^2=\frac{(u^2-2)^2}{2},\qquad
 \mathcal R=\frac{u^3-4u+4}{2}.
\]
The target is equivalent to
\begin{equation}\label{eq:rank-U-target}
 \left(1+\frac1{\mathcal R-1}\right)^{k'}
 >\frac{\mathcal R u}{u-1}.
\end{equation}
For $k'\ge5$, the degree-five Taylor lower bound has difference
\[
 \frac{(u-2)(u^4-4u^2+2)P_{16}(u)}
 {120(u-1)(u^3-4u+2)^5},
\]
where $P_{16}(2+w)$ has coefficients
\[
\begin{aligned}
 &1,38,611,5506,30832,111930,266270,412516,423588,\\
 &360156,409976,493392,385232,164016,34368,3776,512.
\end{aligned}
\]
All are positive.  For $4\le k'\le5$, the degree-four Taylor bound has
difference
\[
 \frac{(u-2)^2(u^4-4u^2+2)P_{11}(u)}
 {24(u-1)(u^3-4u+2)^4}.
\]
Here $2<u<23/10$ and, with $\zeta=(10/3)(u-2)\in(0,1)$,
\begin{align*}
 P_{11}(u)={}&672(1-\zeta)^{11}+\frac{41328}{5}\zeta(1-\zeta)^{10}
 +\frac{229434}{5}\zeta^2(1-\zeta)^9+151677\zeta^3(1-\zeta)^8\\
 &+\frac{207146862}{625}\zeta^4(1-\zeta)^7
 +\frac{3138874749}{6250}\zeta^5(1-\zeta)^6
 +\frac{10756413261}{20000}\zeta^6(1-\zeta)^5\\
 &+\frac{2031792649713}{5000000}\zeta^7(1-\zeta)^4
 +\frac{2647690169319}{12500000}\zeta^8(1-\zeta)^3\\
 &+\frac{18078264722949}{250000000}\zeta^9(1-\zeta)^2
 +\frac{144849864078153}{10000000000}\zeta^{10}(1-\zeta)\\
 &+\frac{128049353071077}{100000000000}\zeta^{11}>0.
\end{align*}
Thus \eqref{eq:rank-U-target} holds.

If $1<y\le\sqrt2$, then $\varepsilon=y^2/2$ and the transformed exponent
is four.  Put $a=y-1$ and $Z=1+au$.  Then
\[
 0<a<5/12,\qquad
 u^2=\frac{3a^2+6a+1}{2a},\qquad
 \mathcal R=Z+\frac{(a+1)^2}{2}.
\]
The desired numerator is
\[
 N=(u-1)\mathcal R^3-uZ^4.
\]
Reduction modulo the quadratic relation gives
\[
 8N=-(a+1)^2(uC(a)+D(a)),
\]
where
\begin{align*}
 C(a)&=17a^4+20a^3-30a^2+44a-19,\\
 D(a)&=-8a^4+40a^3+72a^2-56a+16.
\end{align*}
On $0<a<5/12$, one has $-C(a)>0$, $D(a)>0$, and
\[
 u^2C(a)^2-D(a)^2=\frac{(a+1)^4P_6(a)}{2a}>0,
\]
where
\[
 P_6(a)=867a^6+178a^5-2125a^4+916a^3
 +1393a^2-1462a+361.
\]
All three signs admit positive Bernstein representations after the scaling
$a=(5/12)\zeta$, $0<\zeta<1$; their exact positive coefficients are given in
\texttt{KCoreRankEndpoints.lean}.  Hence $u(-C)>D$, so $N>0$.
This completes both endpoints in \eqref{eq:rank-Emax} and proves
Lemma~\ref{lem:K-rank}.

\section{The sharp mixed inequality}\label{app:mixed-sharp}

We prove Lemma~\ref{lem:mixed-sharp} on the direct cap
$0\le t,p\le T:=\bar t_k$; symbols introduced in this appendix are local
to it.  Both sides of \eqref{eq:mixed-sharp} vanish at $p=0$, so assume
$p>0$.  Put
\[
 n=k-1,\qquad N=k-2,\qquad b=1-t,\qquad a=1-T.
\]
Convexity of $\delta_k$ reduces \eqref{eq:mixed-sharp} to
\begin{equation}\label{eq:mixed-tangent}
 \delta_k(bp)-b^k\delta_k(p)
 \ge N\psi_k(t)\delta_k(p)\delta_k'(bp).
\end{equation}
The tangent point is nonnegative because the cap gives
$N\psi_k(t)\le1-2t\le b$.

The integral formula
\begin{equation}\label{eq:delta-integral}
 \delta_k(x)=k\int_0^x(x-r)(1-r)^N\,\mathrm d r
\end{equation}
supplies a scaling reserve.  Let
$I_j(p)=\int_0^p(p-r)(1-r)^j\,\mathrm d r$; since $p>0$, every
$I_j(p)>0$ and $\delta_k(p)=kI_N(p)>0$.  Expanding
$(1-br)^N$ gives
\begin{equation}\label{eq:mixed-integral-expand}
 \frac{\delta_k(bp)-b^k\delta_k(p)}{\delta_k(p)}
 =b^2\sum_{j=1}^N\binom Nj t^jb^{N-j}\frac{I_{N-j}(p)}{I_N(p)}.
\end{equation}
For the probability measure proportional to
$(p-r)(1-r)^N\,\mathrm d r$,
\begin{equation}\label{eq:mixed-moment}
 \E[r]\ge\frac{p}{3+Np}.
\end{equation}
To see this, scale $r=px$, put $W(x)=(1-px)^N$, and integrate the derivative
of $x(1-x)^2W(x)$.  If
$I_1=\int_0^1(1-x)W(x)\,\mathrm d x$ and
$I_2=\int_0^1x(1-x)W(x)\,\mathrm d x$, the result is
$I_1-3I_2\le NpI_2$.

Bernoulli's inequality now yields
\[
 \frac{I_{N-j}(p)}{I_N(p)}
 =\E[(1-r)^{-j}]
 \ge1+\frac{jp}{3+Np}.
\]
Substitution into \eqref{eq:mixed-integral-expand} reduces
\eqref{eq:mixed-tangent} to
\begin{equation}\label{eq:mixed-elementary}
 S_n(b)(1-bp)^n\ge1-bL(p),
\end{equation}
where
\[
 S_n(b)=1+b+\cdots+b^{n-1},\qquad
 L(p)=\frac{Np}{3+Np}.
\]
Equivalently,
\begin{equation}\label{eq:Phi}
 \Phi(b,p):=S_n(b)(1-bp)^n
 \frac{3+Np}{3+N(1-b)p}\ge1.
\end{equation}
One has
\[
 \partial_p\log\Phi
 =b\left(-\frac n{1-bp}
 +\frac{3N}{(3+Np)(3+N(1-b)p)}\right)<0,
\]
so it suffices to set $p=T$.

For $n\ge5$, $\log S_n$ is convex on $[0,1]$.  Indeed the sign of its
second derivative is the sign of
\[
 Q_n(b)=S_n(b)^2-nb^{n-2}(n-1+b^n)=(1-b)^2C_n(b),
\]
and the coefficients of $C_n$ have one sign change, from positive at lower
powers to negative at higher powers, while
$C_n(1)=n^2(n-1)(n-5)/12\ge0$.  Therefore
\begin{equation}\label{eq:Phi-b-derivative}
 \partial_b\log\Phi(b,T)
 =\frac{S_n'(b)}{S_n(b)}-\frac{nT}{1-bT}
 +\frac{NT}{3+N(1-b)T}\ge0
\end{equation}
once
\begin{equation}\label{eq:mixed-cap-s}
 1-3s-2(n-1)s^2\ge0,\qquad s=(1-T)^n.
\end{equation}
Indeed, on the cap the identity $(1-T)\bigl(n+1+(n-1)s\bigr)=n$ turns
\eqref{eq:mixed-cap-s} into the endpoint slope bound
$S_n'(1-T)/S_n(1-T)\ge nT/(1-T)$; convexity of $\log S_n$ propagates it
to every $b\in[1-T,1]$, where $nT/(1-bT)\le nT/(1-T)$ and the last term
of \eqref{eq:Phi-b-derivative} is nonnegative.
For $n\ge10$, the bound $s\le1/(2\sqrt n)$ --- proved as
\eqref{eq:D-s-upper} in Appendix~\ref{app:D-reserve}, whose $s$ is the
same quantity --- proves
\eqref{eq:mixed-cap-s}.  For $5\le n\le9$ it follows by exact rational
substitution using the cap intervals in Table~\ref{tab:cap-intervals}.
The case $n=4$ is direct: $S_4'/S_4\ge3/2$ on $[1-T,1]$, while the
remaining two terms of \eqref{eq:Phi-b-derivative} combine to less than
$3/2$ in magnitude there:
$nT/(1-bT)-NT/(3+N(1-b)T)<3/2$.  Hence for $k\ge5$ the minimum
is at $b=a=1-T$.  For $k=4$, the derivative has the opposite sign and the
minimum is at $b=1$.

\begin{table}[ht]
\centering
\begin{tabular}{c@{\qquad}c@{\qquad}c}
\toprule
$n=k-1$ & lower bound for $T$ & upper bound for $T$\\
\midrule
3 & $343/1000$ & $344/1000$\\
4 & $300/1000$ & $301/1000$\\
5 & $268/1000$ & $269/1000$\\
6 & $243/1000$ & $244/1000$\\
7 & $223/1000$ & $224/1000$\\
8 & $207/1000$ & $208/1000$\\
9 & $193/1000$ & $194/1000$\\
\bottomrule
\end{tabular}
\caption{Exact rational brackets for the finitely many small cap roots.}
\label{tab:cap-intervals}
\end{table}

At $k=4$, the endpoint is
$\Phi(1,T)=(1-T)^3(3+2T)>1$; Table~\ref{tab:cap-intervals} gives the
positive lower margin $9902181/244140625$.

For $k\ge5$, put $s=(1-T)^n$, $E=1+(n-1)s$, and $q_0=n+E$.  The cap equation
gives $1-T=n/q_0$, $T=E/q_0$.  Bernoulli's inequality reduces
$\Phi(1-T,T)\ge1$ to
\begin{equation}\label{eq:mixed-LR}
 L_n(s):=\frac{s(1-s)(q_0+E^2)}E
 \ge
 R_n(s):=\frac{3q_0^2+(n-1)E^2}{q_0(3q_0+(n-1)E)}.
\end{equation}
On $s\le1/(2\sqrt n)$, $L_n$ increases and $R_n$ decreases; for
$n\ge5$ the actual $s$ lies in this range by \eqref{eq:D-s-upper}, and
only the $n\ge10$ argument below uses it.  For
$n\ge10$ one also has $s\ge4/(3n)$: if $s<4/(3n)$, then
$E<7/3-4/(3n)$, and the cap equation $s=(1+E/n)^{-n}$ would give
$s>4/(3n)$, because
$\bigl(1+(7/3-4/(3n))/n\bigr)^n<3n/4$ --- by
$e^{7/3}<27\le3n/4$ for $n\ge36$ and a direct check for
$10\le n\le35$.  Substitution of $4/(3n)$ in
\eqref{eq:mixed-LR} leaves a positive denominator and numerator
\begin{align*}
 P(n)={}&81n^8+18585n^7+28530n^6-221464n^5+124544n^4\\
 &+152192n^3-180224n^2+65536n-8192>0.
\end{align*}
For $4\le n\le9$, Table~\ref{tab:cap-intervals} gives
\eqref{eq:mixed-LR} by exact rational substitution.  All these estimates
are also kernel-checked: the finite rational substitutions in
\texttt{MixedSharp/Proof.lean}, the uniform $n\ge10$ endpoint bounds
(including $s\ge4/(3n)$, as \texttt{test\_cap\_s\_lower}) in
\texttt{MixedSharp/EndpointLarge.lean}, and the $k=4$ slope reversal in
\texttt{MixedSharp/Q4Slope.lean}.  This proves \eqref{eq:Phi}, then
\eqref{eq:mixed-tangent}, and finally Lemma~\ref{lem:mixed-sharp} on the
direct cap.

\section{The D-reserve endpoints}\label{app:D-reserve}

We prove the endpoint estimates used in Lemma~\ref{lem:D-reserve};
symbols introduced in this appendix are local to it.  Let
$T=\bar t_k$, $a=1-T$, $b=1-u$, and $n=k-1$.  The cap equation is
\begin{equation}\label{eq:D-cap-complement}
 ka+(k-2)a^k=n.
\end{equation}

\subsection{The estimate for \texorpdfstring{$k\ge5$}{k >= 5}}

For fixed $u$, concavity in $t$ reduces
\eqref{eq:D-key-qge5} to $t=u$ and $t=T$.  After multiplication by $n$,
the two endpoint margins are
\begin{align}
 M_k(b)&=k(1-b)b^2(1-b^{2n})-(n-kb+b^k),\label{eq:D-Aq}\\
 N_k(b)&=k(1-b)ab(1-(ab)^n)-(n-kb+b^k).\label{eq:D-Bq}
\end{align}
We show both nonnegative on $a\le b\le1$.

First take the cap corner $b=a$ and set $s=a^n$.  Equation
\eqref{eq:D-cap-complement} becomes
\begin{equation}\label{eq:D-cap-s}
 a\bigl((n+1)+(n-1)s\bigr)=n.
\end{equation}
The corner inequality is equivalent to $E_n(s)\ge0$, where
\begin{equation}\label{eq:D-En}
 E_n(s)=(n+1)-2(n+1)s-(2n^2+n-1)s^2-2n(n-1)s^3.
\end{equation}
This polynomial decreases for $s\ge0$.  For $n=4$, evaluation of the cap
functional at $a=7/10$ gives $a<7/10$, hence
\[
 E_4(s)>E_4(2401/10000)
 =\frac{31141094647}{125000000000}>0.
\]
For $n\ge5$ one has
\begin{equation}\label{eq:D-s-upper}
 s\le\frac1{2\sqrt n}.
\end{equation}
To prove it, substitute the value $1/(2\sqrt n)$ for $s$ in
\eqref{eq:D-cap-s}.  The required comparison follows from
\[
 \log(2\sqrt n)\le E-\frac{E^2}{2n},\qquad
 E=1+\frac{n-1}{2\sqrt n};
\]
the difference increases in $\sqrt n$ and is positive at $n=5$.
At $s_0:=1/(2\sqrt n)$, \eqref{eq:D-En} becomes
\[
 \frac{2r^4-5r^3+3r^2-3r+1}{4r^2},\qquad r=\sqrt n,
\]
which is positive at $r=\sqrt5$ and increasing thereafter; since $E_n$
decreases and $s\le s_0$, this proves
$M_k(a)\ge0$.

For general $b$, exact division yields
\begin{equation}\label{eq:D-A-factor}
 M_k(b)=(1-b)^2R_k(b),
\end{equation}
where
\[
 R_k(b)=-n-(n-1)b+\sum_{j=2}^{n-1}(j+1)b^j
 +k\sum_{j=n}^{2n+1}b^j.
\]
Every term of $R_k''$ is positive, while $R_k(0),R_k'(0)<0$.  Thus
$R_k$ has at most one positive zero.  Since $R_k(a)\ge0$, it is
nonnegative throughout $[a,1]$, proving \eqref{eq:D-Aq}.

Similarly,
\begin{equation}\label{eq:D-B-factor}
 N_k(b)=(1-b)S_k(b),\qquad
 S_k(b)=-n+(ka+1)b+\sum_{j=2}^n b^j-ka^kb^k.
\end{equation}
At $b=a$, equations \eqref{eq:D-Aq} and \eqref{eq:D-Bq} agree, so
$S_k(a)\ge0$.  Also
\[
 S_k'(b)\ge ka+1+
 \left(\frac{n(n+1)}2-1-k^2a^k\right)b^n>0;
\]
the last coefficient is positive for $k\ge5$ because the preceding cap
bounds give $a^k<1/4$.  Hence $S_k$ increases and $N_k(b)\ge0$.
This proves \eqref{eq:D-key-qge5}.

\subsection{The exceptional estimate at \texorpdfstring{$k=4$}{k = 4}}

For $k=4$,
\[
 \psi_4(x)=x-2x^2+\frac43x^3-\frac13x^4,\qquad
 \delta_4(x)=x^2\left(2-\frac43x+\frac13x^2\right).
\]
The cap root satisfies
\begin{equation}\label{eq:D-q4-cap}
 \frac13\le T\le\frac9{26},
\end{equation}
by exact evaluation of $2x+2\psi_4(x)-1$ at the two endpoints.  Concavity
of $\psi_4$ and the cap identity give
\begin{equation}\label{eq:D-q4-ratio}
 \frac{\psi_4(t)}{\delta_4(t)}\ge\frac45,\qquad0<t\le T.
\end{equation}
Consequently
\[
 \zeta+1=\frac{u}{\delta_4(u)}+
       \frac{\psi_4(t)}{\delta_4(t)}
 \ge\frac{u}{\delta_4(u)}+\frac45.
\]
Since $\delta_4(u)\le2u^2$,
\begin{equation}\label{eq:D-q4-stationary}
 \frac1{4(\zeta+1)}
 \le\frac{u}{2+(16/5)u}.
\end{equation}
Put $\pi=t+u-tu$.  From \eqref{eq:D-q4-cap}, $\pi<3/5$, and the
factorization
\[
 \psi_4(\pi)=\frac{\pi(1-\pi)(\pi^2-3\pi+3)}3
 \ge\frac{\pi(1-\pi)}2
\]
reduces the reserve to
\begin{equation}\label{eq:D-q4-last}
 \pi(1-\pi)(5+8u)\ge5u.
\end{equation}
For fixed $u$ the left side is concave in $t$, so check $t=u$ and $t=T$.
At $t=u$, after cancelling $u$, the margin is
\[
 \frac{5-9u-20u^2+27u^3-8u^4}{5}
 \ge\frac{112687}{1142440}>0
\]
on $[0,9/26]$.  At $t=T$, set
\[
 J(T,u)=(1-T)(1-u)(T+(1-T)u)(5+8u)-5u.
\]
It is concave in $T$, so enlarge to the rectangle
$1/3\le T\le9/26$, $0\le u\le9/26$ and check the two $T$-ends.
Each resulting cubic is concave in $u$, leaving the four corner values
\[
 \frac{10}{9},\qquad\frac{7103}{39546},\qquad
 \frac{765}{676},\qquad\frac{1014183}{5940688},
\]
all positive.  This proves \eqref{eq:D-q4-last}, then
\eqref{eq:D-reserve}, at $k=4$.

\section{Exact relative volume and exponential scale}
\label{app:exact-volume}

This appendix supplies the volume arguments deferred from
Section~\ref{sec:invariant}; symbols introduced in this appendix are local
to it.
Write $[x]_+:=\max\{x,0\}$, and let
\[
 \mathfrak v_k:=
 \frac{\operatorname{vol}_{k-1}(\body_k)}
      {\operatorname{vol}_{k-1}(\Delta_{k-1})}
\]
be the fraction of the probability simplex occupied by $\body_k$.  The
ratio is independent of the affine coordinate chart used to define relative
volume.

\begin{proposition}[exact volume and sharp exponential rate]
\label{prop:B-volume-exact}
Let $k\ge4$ and put $N:=k-2$.  Then
\begin{equation}\label{eq:B-volume-exact}
 \mathfrak v_k
 =k(k-1)^2\int_0^{1/2}\sum_{j=0}^{N}(-1)^j\binom Nj
 \bigl[1-(j+2)t-(N-j)\psi_k(t)\bigr]_+^N\,\mathrm d t.
\end{equation}
There is a unique $c_*>0$ satisfying
\begin{equation}\label{eq:B-volume-saddle}
 \frac{1-e^{-c_*}}{c_*}
 =1-\frac{c_*}{e^{c_*}-1}.
\end{equation}
With
\begin{equation}\label{eq:B-volume-rate}
 \Lambda_*:=\log c_*-c_*-1+\frac{1-e^{-c_*}}{c_*},
 \qquad \beta_*:=e^{\Lambda_*},
\end{equation}
one has
\begin{equation}\label{eq:B-volume-asymptotic}
 \lim_{k\to\infty}\frac{\log\mathfrak v_k}{k}=\Lambda_*,
 \qquad
 \lim_{k\to\infty}\mathfrak v_k^{1/k}=\beta_*.
\end{equation}
Equivalently, $\mathfrak v_k=\exp((\Lambda_*+o(1))k)$.  Numerically, by
external evaluation of the saddle equations (the Lean development
certifies the exact saddle-defined limits, not these decimal digits),
\[
 \begin{aligned}
 c_*&=1.392614330471526\ldots,\\
 \Lambda_*&=-1.521745139795282\ldots,\\
 \beta_*&=0.218330536985878\ldots.
 \end{aligned}
\]
\end{proposition}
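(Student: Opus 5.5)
The formula \eqref{eq:B-volume-exact} comes from computing the volume directly in sorted coordinates. The rate \eqref{eq:B-volume-asymptotic} will then come from a Laplace/large-deviation analysis of the resulting one-dimensional integral, after replacing its alternating inclusion–exclusion sum by a positive probabilistic expression.

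\emph{Exact formula.} Membership in $\body_k$ depends only on the sorted atoms, and ties have measure zero. So I split the simplex according to which coordinate is largest ($k$ choices) and which is second ($k-1$ choices). I take the second atom $t$ and the $N=k-2$ remaining atoms $x_1,\dots,x_N$ as free coordinates; the largest atom is $1-t-\sum x_i$. The uniform law on $\Delta_{k-1}$ has density $(k-1)!$ in any $k-1$ of the coordinates, so
\[
 \mathfrak v_k=k(k-1)\,(k-1)!\int \operatorname{vol}_N(\Omega_t)\,\mathrm dt .
\]
Here $\Omega_t=\{x:\psi_k(t)\le x_i\le t,\ \sum x_i\le 1-2t\}$. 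The last constraint is exactly ``largest $\ge t$'', and $t\le 1/2$ is forced. Substituting $y_i=x_i-\psi_k(t)\in[0,t-\psi_k(t)]$ turns $\Omega_t$ into a box intersected with the simplex $\sum y_i\le 1-2t-N\psi_k(t)$. Inclusion–exclusion over the set of $j$ coordinates exceeding the box side gives
\[
 \operatorname{vol}_N(\Omega_t)=\frac1{N!}\sum_j(-1)^j\binom Nj\bigl[1-(j+2)t-(N-j)\psi_k(t)\bigr]_+^N .
\]
Since $(k-1)!/N!=k-1$, this yields the prefactor $k(k-1)^2$. This step is routine; I would also check measurability and that the chart independence of the ratio is harmless.

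\emph{Exponential rate.} For the asymptotics I would not work with the alternating sum. Instead I would rewrite
\[
 N!\operatorname{vol}_N(\Omega_t)=N!\,(t-\psi_k(t))^N\,\Pr\Bigl(\textstyle\sum_{i\le N}U_i\le \frac{1-2t-N\psi_k(t)}{t-\psi_k(t)}\Bigr)
\]
with $U_i$ i.i.d.\ uniform on $[0,1]$. I then scale $t=c/k$. Using $(1-c/k)^k\to e^{-c}$, one gets
\[
 k\psi_k(c/k)\to 1-e^{-c},\qquad k\delta_k(c/k)\to c-1+e^{-c},
\]
and the probability threshold divided by $N$ tends to $(1-e^{-c})/c$ after normalization by $t-\psi_k(t)$.

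Combining Stirling for the factors $k(k-1)^2 N!\,(t-\psi_k(t))^N$ with Cramér's theorem for the uniform sum (exponential tilting) gives $\frac1k\log(\text{integrand})\to g(c)$ for an explicit $g$. Its maximization over $c$ and over the tilt should reproduce the saddle equation \eqref{eq:B-volume-saddle}. That equation equates the required mean $(1-e^{-c})/c$ with the mean $1-c/(e^c-1)$ of a uniform tilted at parameter $c$, so I expect the tilt to coincide with the scaling variable. The value at the saddle should be $\Lambda_*$ in \eqref{eq:B-volume-rate}. I would separately show that $c\mapsto g(c)$ is strictly concave or unimodal, so that $c_*$ is unique.

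\emph{Main obstacle.} The hardest step is making the Laplace principle rigorous uniformly in $t$. This needs an upper bound on the integrand valid on all of $[0,1/2]$, not just at $t=c/k$ with $c$ bounded; the regions $t\gg 1/k$ and $t\ll 1/k$ must be shown exponentially negligible. For the upper bound I would use the Chernoff bound $\Pr(\sum U_i\le Na)\le e^{-N\Lambda^*(a)}$, which holds exactly for every $N$, together with explicit inequalities such as $(1-t)^k\le e^{-kt}$ and Bernoulli's inequality. For the lower bound I would restrict to a window $|t-c_*/k|\le\varepsilon/k$ and use a tilted central-limit or second-moment lower bound for the uniform sum. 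The two-sided bounds together give $\frac1k\log\mathfrak v_k\to\Lambda_*$, and hence $\mathfrak v_k^{1/k}\to\beta_*$.
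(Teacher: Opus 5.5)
Your derivation of \eqref{eq:B-volume-exact} is the paper's: ordered chambers, translation by $\psi_k(t)$, clipped-cube inclusion--exclusion, and $(k-1)!/N!=k-1$. Your outline for the rate is also the paper's: $t=c/k$, exponential tilting for the upper bound, and a second-moment bound on a window about $c_*$ for the lower bound. The gap is in the one computation that is meant to identify the constants.

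\textbf{The threshold computation is wrong.} With $t=c/k$ one has $1-2t-N\psi_k(t)\to e^{-c}$ and $k(t-\psi_k(t))\to w(c):=c-1+e^{-c}$. So the normalized threshold tends to $e^{-c}/(c-1+e^{-c})$, not $(1-e^{-c})/c$; the latter is the limit of $\psi_k(t)/t$. Also, the mean of a uniform tilted by $e^{-cx}$ is $1/c-1/(e^c-1)$, not $1-c/(e^c-1)$. Taken literally, your threshold is about $0.54>1/2$ at $c_*$. That is the regime with no large-deviation cost, so \eqref{eq:B-volume-saddle} cannot arise the way you describe.

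\textbf{The correct mechanism is a saddle} (sup over $c$ of inf over the tilt). The quantity $\frac1k\log$ of the integrand tends to $-1+\inf_{\lambda>0}F_\lambda(c)$, where $F_\lambda(c)=\lambda e^{-c}+\log\bigl((1-e^{-\lambda w(c)})/\lambda\bigr)$.
\begin{enumerate}
\item Stationarity in $\lambda$ gives $e^{-c}=1/\lambda-w/(e^{\lambda w}-1)$.
\item Stationarity in $c$ gives $\lambda w(c)=c$. This is the precise sense in which the tilt equals the scaling variable.
\item Eliminating $\lambda$ gives \eqref{eq:B-volume-saddle}; its two sides are $\lambda_*e^{-c_*}$ computed from the two conditions.
\end{enumerate}
Then $(1-e^{-\lambda_*w_*})/\lambda_*=c_*e^{-c_*}$ and $\lambda_*e^{-c_*}=(1-e^{-c_*})/c_*$ give $F_{\lambda_*}(c_*)=\Lambda_*+1$. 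The $-1$ comes from Stirling.

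\textbf{The saddle structure also removes your main obstacle.} You do not need per-$t$ Chernoff bounds plus separate tail estimates. The paper uses the single tilt $\lambda_*=c_*/w(c_*)$ for every $c\in[0,k/2]$.
\begin{itemize}
\item After dilating the fiber by $k-1$, exponential Markov bounds the fiber volume by $e^{\lambda_*N\alpha_k(c)}\bigl((1-e^{-\lambda_*w_k(c)})/\lambda_*\bigr)^N$, where $w_k(c)=c-1+(1-c/k)^k$ and $\alpha_k(c)=(1-c/k)^k-(c-1)/N$.
\item Your inequality $(1-t)^k\le e^{-kt}$ is exactly what gives $w_k\le w$ and $\alpha_k\le e^{-c}+1/N$.
\item $F_{\lambda_*}$ attains its global maximum on $(0,\infty)$ at $c_*$. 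This holds because $\operatorname{sign}F_{\lambda_*}'(c)=\operatorname{sign}(c-\lambda_*w(c))$ and $c/w(c)$ is strictly decreasing.
\end{itemize}
Together these give $\int_0^{k/2}(\cdots)\,\mathrm dc\le\frac k2e^{\lambda_*}e^{NF_{\lambda_*}(c_*)}$ in one step.

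Uniqueness of $c_*$ is also direct: $\frac{1-e^{-c}}{c}+\frac{c}{e^c-1}-1$ strictly decreases from $1$ to $-1$. So you need no concavity or unimodality of the rate function.

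Your lower-bound plan matches the paper's: a truncated exponential tilt on $[0,w_0]$ with $w_0<w_*$, and Chebyshev on a fixed $c$-window.
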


\begin{proof}
The set on which two atoms tie is contained in a finite union of affine
hyperplanes and hence has relative volume zero.  Off this null set, choose
the labels of the largest and second-largest atoms; there are $k(k-1)$
ordered choices, all meeting $\body_k$ in the same volume by permutation
symmetry of $\body_k$.  Fix one such choice, write the
second-largest atom as $t$, and denote the remaining $N$ atoms by
$\xi_1,\ldots,\xi_N$.  The omitted largest atom is
$1-t-\sum_i\xi_i$.  Thus, up to null boundary faces, the part of this
order chamber lying in $\body_k$ is described by
\[
 0\le t\le\frac12,\qquad
 \psi_k(t)\le\xi_i\le t,\qquad
 \sum_{i=1}^N\xi_i\le1-2t.
\]
After the translation $\eta_i=\xi_i-\psi_k(t)$, its fiber at $t$ is
\[
 0\le\eta_i\le t-\psi_k(t),\qquad
 \sum_{i=1}^N\eta_i\le1-2t-N\psi_k(t).
\]
For $w\ge0$, inclusion--exclusion in the coordinate constraints gives the
clipped-cube identity
\begin{equation}\label{eq:clipped-cube-volume}
 \operatorname{vol}_N\left\{x\in[0,w]^N:\sum_i x_i\le L\right\}
 =\frac1{N!}\sum_{j=0}^N(-1)^j\binom Nj[L-jw]_+^N.
\end{equation}
Indeed, after prescribing a set of $j$ coordinates to exceed $w$ and
translating those coordinates by $w$, the contribution is the simplex of
volume $[L-jw]_+^N/N!$; summing over the prescribed set gives
\eqref{eq:clipped-cube-volume}.  Substitution of
$w=t-\psi_k(t)$ and $L=1-2t-N\psi_k(t)$ now gives the integrand in
\eqref{eq:B-volume-exact}.  Finally, the full simplex has coordinate-chart
volume $1/(k-1)!$, so normalization multiplies the single-chamber integral
by
\[
 k(k-1)\frac{(k-1)!}{N!}=k(k-1)^2.
\]
This proves the exact formula.

We turn to the exponential rate.  Set
\[
 s_k(c):=\left(1-\frac ck\right)^k,\qquad
 w_k(c):=c-1+s_k(c),\qquad
 \alpha_k(c):=s_k(c)-\frac{c-1}{N},
\]
and, for $w\ge0$, let
\[
 W_N(w,\alpha):=
 \operatorname{vol}_N\left\{x\in[0,w]^N:\sum_i x_i\le N\alpha\right\}.
\]
The substitution $t=c/k$ followed by dilation of the $N$ fiber coordinates
by $k-1$ rewrites the preceding chamber decomposition as
\begin{equation}\label{eq:B-volume-scaled}
 \mathfrak v_k=P_k\int_0^{k/2}W_N(w_k(c),\alpha_k(c))\,\mathrm d c,
 \qquad
 P_k:=\frac{(k-1)(k-1)!}{(k-1)^N}.
\end{equation}
Stirling's formula gives
\begin{equation}\label{eq:B-volume-prefactor}
 \lim_{k\to\infty}\frac1k\log P_k=-1.
\end{equation}

Put
\[
 w(c):=c-1+e^{-c},\qquad \alpha(c):=e^{-c},
\]
and, for $\lambda>0$,
\[
 Z_\lambda(w):=\int_0^w e^{-\lambda x}\,\mathrm d x
 =\frac{1-e^{-\lambda w}}{\lambda},\qquad
 F_\lambda(c):=\lambda\alpha(c)+\log Z_\lambda(w(c)).
\]
The function
\[
 g(c):=\frac{1-e^{-c}}c+\frac c{e^c-1}-1
\]
is strictly decreasing on $(0,\infty)$: the derivatives of its first two
summands have numerators $(c+1)e^{-c}-1<0$ and
$e^c(1-c)-1<0$, respectively.  Since $g(0+)=1$ and
$g(c)\to-1$ as $c\to\infty$, equation~\eqref{eq:B-volume-saddle} has a
unique positive solution.

Let
\[
 w_*:=w(c_*),\qquad \lambda_*:=\frac{c_*}{w_*}.
\]
A rearrangement of the saddle equation gives
\begin{equation}\label{eq:B-volume-saddle-identities}
 \alpha(c_*)=
 \frac1{\lambda_*}-\frac{w_*}{e^{\lambda_*w_*}-1},\qquad
 Z_{\lambda_*}(w_*)=c_*e^{-c_*},\qquad
 \lambda_*\alpha(c_*)=\frac{1-e^{-c_*}}{c_*}.
\end{equation}
Moreover,
\[
 F_{\lambda_*}'(c)
 =\lambda_*\left(
   \frac{1-e^{-c}}{e^{\lambda_*w(c)}-1}-e^{-c}\right),
\]
whose sign is the sign of $c-\lambda_*w(c)$.  On $(0,\infty)$ the ratio
$c/w(c)$ is strictly decreasing, because
\[
 \left(\frac c{w(c)}\right)'
 =\frac{(1+c)e^{-c}-1}{w(c)^2}<0.
\]
Consequently $F_{\lambda_*}$ has its unique global maximum on
$(0,\infty)$ at $c_*$.

For the upper bound, exponential Markov gives, for every $\lambda>0$,
\[
 W_N(w_k(c),\alpha_k(c))
 \le e^{\lambda N\alpha_k(c)}Z_\lambda(w_k(c))^N.
\]
On $0\le c\le k/2$ one has $w_k(c)\le w(c)$ and
$\alpha_k(c)\le\alpha(c)+1/N$.  Taking $\lambda=\lambda_*$ and using the
maximum on $(0,\infty)$ just proved (the integrand vanishes at $c=0$)
therefore yields
\[
 \int_0^{k/2}W_N(w_k(c),\alpha_k(c))\,\mathrm d c
 \le \frac{k}{2}e^{\lambda_*}
     \exp\bigl(NF_{\lambda_*}(c_*)\bigr).
\]

For the matching lower bound, fix $0<w_0<w_*$ and let
\[
 Z_0:=Z_{\lambda_*}(w_0),\qquad
 \mu_0:=\frac1{\lambda_*}
        -\frac{w_0}{e^{\lambda_*w_0}-1}.
\]
The mean $\mu_0$ of the density
$e^{-\lambda_*x}/Z_0$ on $[0,w_0]$ is strictly smaller than
$\alpha(c_*)$: the first identity in
\eqref{eq:B-volume-saddle-identities} evaluates the same formula at
$w_*$, and $w\mapsto1/\lambda_*-w/(e^{\lambda_*w}-1)$ strictly
increases, since $x/(e^x-1)$ strictly decreases (again
$e^x(1-x)-1<0$).  Choose
$\varepsilon>0$ and a fixed interval $J$ about $c_*$ so that, for all
large $k$ and all $c\in J$,
\[
 w_k(c)\ge w_0,\qquad \alpha_k(c)\ge\mu_0+\varepsilon;
\]
this is possible because $s_k(c)\to e^{-c}$ locally uniformly.  If
$X_1,\ldots,X_N$ are independent with the preceding truncated-exponential
density, Chebyshev's inequality shows that
\[
 \Pr\left(\left|N^{-1}\sum_iX_i-\mu_0\right|\le\varepsilon\right)
 \ge\frac12
\]
for all large $N$.  Comparing this tilted probability with Lebesgue volume
on the same event gives, uniformly for $c\in J$,
\[
 W_N(w_k(c),\alpha_k(c))
 \ge\frac12\left[Z_0e^{\lambda_*(\mu_0-\varepsilon)}\right]^N.
\]
Integrating over $J$, then letting $w_0\uparrow w_*$ and
$\varepsilon\downarrow0$, proves
\[
 \lim_{k\to\infty}\frac1k\log
 \int_0^{k/2}W_N(w_k(c),\alpha_k(c))\,\mathrm d c
 =F_{\lambda_*}(c_*).
\]
By \eqref{eq:B-volume-saddle-identities},
\[
 F_{\lambda_*}(c_*)
 =\log c_*-c_*+\frac{1-e^{-c_*}}{c_*}.
\]
Combining this with \eqref{eq:B-volume-scaled} and
\eqref{eq:B-volume-prefactor} gives the logarithmic limit in
\eqref{eq:B-volume-asymptotic}.  The root limit follows by exponentiation,
since $\mathfrak v_k>0$ by Proposition~\ref{prop:B-interior}.
\end{proof}

\section{The Lean statements}\label{app:lean-statement}

For reference --- and for readers who want to see precisely what has
been machine-checked without opening the repository --- this appendix
reproduces, verbatim, the Lean statements of the quasigroup gates and of
the geometric results.  Each file below contains only definitions and
statements; the proofs live in separate modules of the repository.

The quasigroup statement file defines quasigroup convolution, the two
ordinary quasigroup gates, the absorbing-zero variant, and the combined
addition/multiplication gate of Theorem~\ref{thm:quasigroup}.

% Inlined verbatim from Rronce/Bq/QuasigroupStatement.lean (synced 2026-07-28).
% If the repository file changes, regenerate this block.
\begin{lstlisting}
import Rronce.Bq.Statement

/-!
# The B_q quasigroup gate, all q ≥ 4: STATEMENT ONLY

This proof-free audit file freezes the quasigroup extension of the addition
gate.  The implementation lives in `Rronce/Bq/QuasigroupProof.lean`, and
`Rronce/Bq/QuasigroupGate.lean` exposes the public theorems.

The statement uses the standard translation characterization of a
quasigroup: fixing either argument of the binary operation gives a
permutation of the carrier.  No identity, associativity, commutativity, or
inverse operation is assumed.

There are two genuinely different results:

* ordinary quasigroup convolution, which is the extension of the paper's
  addition gate;
* convolution for an operation with an absorbing zero whose restriction to
  the nonzero elements is a quasigroup, which is the extension of the
  paper's field-multiplication gate.

The distinction matters: multiplication on a whole field is not a
quasigroup operation, because translation by zero is constant.
-/

open scoped BigOperators

namespace Rronce
namespace Bq

noncomputable section

/--
A binary operation is a quasigroup operation when every left and right
translation is bijective.

Equivalently, each equation `op x y = z` has a unique solution in either one
of `x` and `y` after the other is fixed.
-/
def IsQuasigroupOp {Q : Type} (op : Q → Q → Q) : Prop :=
  (∀ x, Function.Bijective (op x)) ∧
  (∀ y, Function.Bijective (fun x => op x y))

/--
An operation is an absorbing-zero quasigroup operation when zero absorbs on
both sides and multiplication by every nonzero element is cancellative on
both sides.

On a finite carrier, the two injectivity clauses say precisely that the
nonzero left and right translations are permutations.  Together with
absorption they also force a product of two nonzero elements to be nonzero,
so the complement of `zero` is a quasigroup.  This is the exact algebraic
content of field multiplication used by the proof.
-/
def IsZeroQuasigroupOp {Q : Type} (zero : Q) (op : Q → Q → Q) : Prop :=
  (∀ x, op zero x = zero) ∧
  (∀ x, op x zero = zero) ∧
  (∀ x, x ≠ zero → Function.Injective (op x)) ∧
  (∀ y, y ≠ zero → Function.Injective (fun x => op x y))

/--
Convolution with respect to an arbitrary binary operation: the law of
`op X Y` for independent `X ∼ μ` and `Y ∼ ν`.

The double-sum definition does not build any quasigroup law into the
definition itself; `IsQuasigroupOp op` is a hypothesis of the preservation
theorem.
-/
def quasigroupConv {Q : Type} [Fintype Q] (op : Q → Q → Q)
    (μ ν : Q → ℝ) : Q → ℝ :=
  by
    classical
    exact fun z => ∑ x, ∑ y, if op x y = z then μ x * ν y else 0

/--
**Quasigroup convolution theorem.**  For every finite quasigroup of order at
least four, convolution preserves `B_q`.
-/
def QuasigroupConvPreservesBq : Prop :=
  ∀ (Q : Type) [Fintype Q] (op : Q → Q → Q),
    IsQuasigroupOp op →
    4 ≤ Fintype.card Q →
    ∀ μ ν : Q → ℝ,
      IsProbDist μ → IsProbDist ν →
      MemBq μ → MemBq ν →
      MemBq (quasigroupConv op μ ν)

/--
**Two-quasigroup gate theorem.**  If the operations labelled addition and
multiplication are both quasigroup operations on the same finite carrier,
then both convolution gates preserve `B_q`.
-/
def TwoQuasigroupGatesPreserveBq : Prop :=
  ∀ (Q : Type) [Fintype Q] (add mul : Q → Q → Q),
    IsQuasigroupOp add →
    IsQuasigroupOp mul →
    4 ≤ Fintype.card Q →
    ∀ μ ν : Q → ℝ,
      IsProbDist μ → IsProbDist ν →
      MemBq μ → MemBq ν →
      MemBq (quasigroupConv add μ ν) ∧
        MemBq (quasigroupConv mul μ ν)

/--
**Absorbing-zero quasigroup convolution theorem.**  For every finite
absorbing-zero extension of a quasigroup with at least four elements,
convolution preserves `B_q`.
-/
def ZeroQuasigroupConvPreservesBq : Prop :=
  ∀ (Q : Type) [Fintype Q] (zero : Q) (mul : Q → Q → Q),
    IsZeroQuasigroupOp zero mul →
    4 ≤ Fintype.card Q →
    ∀ μ ν : Q → ℝ,
      IsProbDist μ → IsProbDist ν →
      MemBq μ → MemBq ν →
      MemBq (quasigroupConv mul μ ν)

/--
**Quasigroup addition/multiplication gate theorem.**  On a finite carrier of
order at least four, suppose addition is a quasigroup operation and
multiplication has an absorbing zero and is a quasigroup off zero.  Then both
convolution gates preserve `B_q`.  No compatibility law between the two
operations is required.
-/
def QuasigroupGatesPreserveBq : Prop :=
  ∀ (Q : Type) [Fintype Q] (zero : Q) (add mul : Q → Q → Q),
    IsQuasigroupOp add →
    IsZeroQuasigroupOp zero mul →
    4 ≤ Fintype.card Q →
    ∀ μ ν : Q → ℝ,
      IsProbDist μ → IsProbDist ν →
      MemBq μ → MemBq ν →
      MemBq (quasigroupConv add μ ν) ∧
        MemBq (quasigroupConv mul μ ν)

end

end Bq
end Rronce
\end{lstlisting}

The volume statement file expresses the quantitative nonvacuity bound
\eqref{eq:B-volume-lower} in the coordinate chart described in
Appendix~\ref{sec:lean}.

% Inlined verbatim from Rronce/Bq/VolumeStatement.lean (synced 2026-07-28).
% If the repository file changes, regenerate this block.
\begin{lstlisting}
import Rronce.Bq.Statement

/-!
# A positive normalized fraction of `B_q`: STATEMENT ONLY

This proof-free audit file freezes the quantitative nonvacuity theorem used in
the paper.  The probability simplex on `q = m + 2` atoms is represented in
its standard `q - 1 = m + 1` dimensional coordinate chart: the displayed
coordinates are `x : Fin (m + 1) → ℝ`, and the omitted atom is
`1 - ∑ i, x i`.  Thus the ratio below is genuine relative simplex volume,
not the zero ambient `q`-dimensional volume of the affine hyperplane.

All proofs live outside this file, in `Rronce/Bq/VolumeProof.lean`; the public
header is `Rronce/Bq/Volume.lean`.
-/

open scoped BigOperators
open MeasureTheory

namespace Rronce
namespace Bq

noncomputable section

/-- Restore the omitted atom in the standard coordinate chart of the
probability simplex on `m + 2` points. -/
def completeLaw (m : ℕ) (x : Fin (m + 1) → ℝ) : Fin (m + 2) → ℝ :=
  Fin.cases (1 - ∑ i, x i) x

/-- The standard coordinate simplex: precisely the vectors whose completion
is a probability distribution. -/
def coordinateSimplex (m : ℕ) : Set (Fin (m + 1) → ℝ) :=
  {x | IsProbDist (completeLaw m x)}

/-- The pullback of the probability-distribution part of `B_(m+2)` to the
standard coordinate chart. -/
def coordinateBq (m : ℕ) : Set (Fin (m + 1) → ℝ) :=
  {x | IsProbDist (completeLaw m x) ∧ MemBq (completeLaw m x)}

/-- Normalized relative volume of `B_(m+2)`, as the coordinate volume of the
body divided by that of the whole probability simplex. -/
def volumeFraction (m : ℕ) : ℝ :=
  volume.real (coordinateBq m) / volume.real (coordinateSimplex m)

/-- Half the exact slack at the uniform law, in the closed form displayed in
the paper. -/
def fractionRadius (q : ℕ) : ℝ :=
  ((q : ℝ) - 1) ^ (q - 1) / (2 * (q : ℝ) ^ q)

/-- Both chart sets are Borel measurable, so the quotient is ordinary
Lebesgue volume rather than merely an outer-measure value. -/
def VolumeFractionMeasurable : Prop :=
  ∀ m : ℕ,
    MeasurableSet (coordinateSimplex m) ∧ MeasurableSet (coordinateBq m)

/-- **Quantitative geometric nonvacuity.**  For `q = m + 2 ≥ 4`, the normalized
`(q-1)`-dimensional volume fraction of `B_q` is at least

`(((q-1)^(q-1)) / (2*q^q))^(q-1)`.
-/
def VolumeFractionLowerBound : Prop :=
  ∀ m : ℕ, 2 ≤ m →
    fractionRadius (m + 2) ^ (m + 1) ≤ volumeFraction m

/-- The displayed strict-positivity consequence, frozen separately for audit. -/
def VolumeFractionPositive : Prop :=
  ∀ m : ℕ, 2 ≤ m → 0 < volumeFraction m
end
end Bq
end Rronce
\end{lstlisting}

The exact-volume statement file expresses the integral formula and the
sharp asymptotics of Appendix~\ref{app:exact-volume}.

% Inlined verbatim from Rronce/Bq/ExactVolumeStatement.lean (synced 2026-07-28).
% If the repository file changes, regenerate this block.
\begin{lstlisting}
import Rronce.Bq.VolumeStatement

/-!
# Exact relative volume and its exponential rate: STATEMENT ONLY

This proof-free audit file freezes the exact one-dimensional formula for the
normalized volume of `B_q` and the sharp exponential asymptotic statement.
The Lean index is `m = q - 2`, so the simplex has dimension `m + 1` and the
finite positive-part sum has order `m`.

All implementations live in separate proof modules.  The public header is
`Rronce/Bq/ExactVolume.lean`.
-/

open scoped BigOperators Interval
open Filter MeasureTheory

namespace Rronce
namespace Bq

noncomputable section

/-- Positive part, written `[x]_+` in the paper. -/
def positivePart (x : ℝ) : ℝ := max x 0

/-- The affine expression occurring in the clipped-cube
inclusion--exclusion formula. -/
def volumeSliceTerm (m j : ℕ) (t : ℝ) : ℝ :=
  1 - (j + 2 : ℕ) * t - (m - j : ℕ) * psi (m + 2) t

/-- The elementary integrand in the exact normalized-volume formula. -/
def exactVolumeIntegrand (m : ℕ) (t : ℝ) : ℝ :=
  ∑ j ∈ Finset.range (m + 1),
    (-1 : ℝ) ^ j * Nat.choose m j * positivePart (volumeSliceTerm m j t) ^ m

/-- The exact one-dimensional expression for the normalized fraction of the
probability simplex occupied by `B_(m+2)`. -/
def exactVolumeIntegral (m : ℕ) : ℝ :=
  ((m + 2 : ℕ) : ℝ) * ((m + 1 : ℕ) : ℝ) ^ 2 *
    ∫ t in (0 : ℝ)..(1 / 2 : ℝ), exactVolumeIntegrand m t

/-- **Exact volume formula.**  For every `q = m + 2 ≥ 4`, the normalized
relative volume of `B_q` equals `exactVolumeIntegral m`. -/
def ExactVolumeFormula : Prop :=
  ∀ m : ℕ, 2 ≤ m → volumeFraction m = exactVolumeIntegral m

/-- The scalar saddle equation selecting the exponential rate. -/
def IsVolumeSaddle (c : ℝ) : Prop :=
  0 < c ∧
    (1 - Real.exp (-c)) / c = 1 - c / (Real.exp c - 1)

/-- The logarithmic exponential rate at the saddle. -/
def volumeLogRate (c : ℝ) : ℝ :=
  Real.log c - c - 1 + (1 - Real.exp (-c)) / c

/-- **Sharp exponential asymptotics.**  There is a unique positive saddle
`c_*`, and the normalized fractions satisfy

`log (volumeFraction m) / (m+2) → volumeLogRate c_*`.

Equivalently, their `(m+2)`-th roots tend to
`exp (volumeLogRate c_*) = 0.2183305369...`.
-/
def ExactVolumeExponentialAsymptotics : Prop :=
  ∃ c : ℝ,
    IsVolumeSaddle c ∧
    (∀ d : ℝ, IsVolumeSaddle d → d = c) ∧
    Tendsto
      (fun m : ℕ ↦ Real.log (volumeFraction m) / ((m + 2 : ℕ) : ℝ))
      atTop (nhds (volumeLogRate c))

/-- The equivalent root-exponential rate. -/
def volumeRootRate (c : ℝ) : ℝ :=
  Real.exp (volumeLogRate c)

/-- Root form of the sharp exponential asymptotics.  This is frozen
separately so that the phrase "exponential rate" has a literal theorem:
the `(m+2)`-th root of the normalized volume converges. -/
def ExactVolumeRootAsymptotics : Prop :=
  ∃ c : ℝ,
    IsVolumeSaddle c ∧
    (∀ d : ℝ, IsVolumeSaddle d → d = c) ∧
    Tendsto
      (fun m : ℕ ↦
        Real.rpow (volumeFraction m) (1 / ((m + 2 : ℕ) : ℝ)))
      atTop (nhds (volumeRootRate c))

end
end Bq
end Rronce
\end{lstlisting}

\end{document}